\documentclass[letterpaper,12pt]{article}
\usepackage[margin=1.2in]{geometry}
\usepackage{enumerate}
\usepackage{enumitem}

\usepackage{amssymb}
\usepackage{amsmath}
\usepackage{amsthm}
\usepackage{mathrsfs}
\usepackage{mathtools}
\usepackage{algorithm}
\usepackage{algorithmicx}
\usepackage{algpseudocode}
\usepackage{bbm}
\usepackage{tikz}
\usepackage{standalone}
\usepackage{import} 
\usepackage{comment}
\usetikzlibrary{calc}
\usepackage{subcaption}
\definecolor{red1}{RGB}{230,25,75}

\usepackage{cleveref}  
\Crefname{graph}{Graph}{Graphs}

\newtheorem{theorem}{Theorem}[section]
\newtheorem{proposition}[theorem]{Proposition}
\newtheorem{corollary}[theorem]{Corollary}
\newtheorem{lemma}[theorem]{Lemma}

\newtheorem{remark}[theorem]{Remark}
\newtheorem{definition}[theorem]{Definition}
\newtheorem{observation}[theorem]{Observation}

\newtheorem{conjecture}[theorem]{Conjecture}
\algnewcommand\algorithmicforeach{\textbf{for each}}
\algdef{S}[FOR]{ForEach}[1]{\algorithmicforeach\ #1\ \algorithmicdo}
\algdef{SE}[REPEATN]{RepeatN}{End}[1]{\algorithmicrepeat\ #1 \textbf{times}}{\algorithmicend}

\usepackage[isbn=false,doi=false,style=numeric,maxbibnames=50]{biblatex}

\addbibresource{CountingPathSystems.bib}

\begin{document}
\title{On the Number of Path Systems}
\author{Daniel Cizma\thanks{Einstein Institute of Mathematics, Hebrew University, Jerusalem 91904, Israel. e-mail: daniel.cizma@mail.huji.ac.il.} \and  {Nati Linial\thanks{School of Computer Science and Engineering, Hebrew University, Jerusalem 91904, Israel. e-mail: nati@cs.huji.ac.il.{~Supported in part by an ERC Grant 101141253, "Packing in Discrete Domains - Geometry and Analysis".}}}}
\date{}
\maketitle
\begin{abstract}
A {\em path system} in a graph $G$ is a collection of paths, 
with exactly one path between any two vertices in $G$. 
A path system is said to be {\em consistent}
if it is intersection-closed. We show that the number of consistent path systems on $n$ 
vertices is $n^{\frac{n^2}{2}(1-o(1))}$, whereas
the number of consistent path systems which are realizable as the unique geodesics 
w.r.t.\ some metric is only $2^{\Theta(n^2)}$.

In addition, these insights allow us to improve known bounds
on the face-count of the metric cone and shed new light on enumerating
maximum-VC-classes.
\end{abstract}
\section{Introduction}
This paper studies graphs as geometric objects.
{\em Geodesics} offer an important perspective of a geometric
space by specifying how best to move between its points. When the 
space in question is a connected 
undirected
graph $G=(V,E)$, the answer is provided
by a {\em path system} in $G$. To wit,
given any two vertices $u,v \in V$ we 
have a designated path, 
$P_{u,v}=P_{v,u}$, between $u$ 
and $v$. We view $P_{u,v}$
as our `chosen route' between $u$ and $v$. {\em Metric} path systems
are the most well-known examples. Such a path system is defined by
assigning positive weights 
$w: E \to \mathbb{R}_{>0}$ to $G$'s
edges and letting $P_{u,v}$ be
a $w$-shortest $uv$ path. In a
{\em strictly metric} path system, $w$
has the property that the
shortest paths are unique. In addition,
a strictly metric path system is consistent
in the sense that
\begin{equation}\label{eq:def}   
\text{If~}a\in P_{u,v} \text{~then~} P_{u,v}
\text{~is the concatenation of~} P_{u,a} \text{~and~} P_{a,v}.
\end{equation}

\noindent
This condition can be viewed as an axiom in the geometry of path systems.
Any path system that satisfies condition (\ref{eq:def})
is said to be {\em consistent}. We have
discovered earlier on that consistent path systems need
not be metric, and a main objective of the present article is to make this distinction
quantitative, by estimating the number of
consistent path systems on $n$ vertices vs.\ the number
of metric ones. There is already a substantial body of work
dealing with path systems  \cite{GLMY, Bo, CL, CCL,CCL2}.

\subsection{Main Results and some Background}
Among our main discoveries, we show
(Theorem \ref{thm:path_system_count}) 
\[ n^{\frac{n^2}{2}(1-o(1))} \le|\mathscr{M}_n|\le |\mathscr{P}_n| \le n^{\frac{n^2}{2}},\]
where $\mathscr{P}_n$ and
$\mathscr{M}_n$ is the set of all 
consistent resp.\ metric path systems 
on $n$ vertices.\\

In contrast (Theorem \ref{thm:strict_count}) 
\[\text{There are between~}2^{(0.51-o(1))n^2}\text{~and~} 2^{(1.38-o(1))n^2} \text{~strictly metric $n$-vertex path systems.}\] 

Let us mention some pertinent earlier work.
Perhaps most relevant is work by Graham, Yao and Yao \cite{GYY}
who considered similar counting problems. Their hope was that
improved estimates would lead
to stricter lower bounds on the computational complexity of
certain graph metric problems. In particular,
we manage to improve their bound
on the count of faces of the metric cone. An interesting question that we study here
is how many strictly metric systems a fixed graph can have. Our best construction is  
inspired by an idea from \cite{GYY}. Namely,
we introduce the notion of a monotone path system and show that certain classes of monotone path systems are strictly metric.

Perhaps more surprisingly, we discover a relation between consistent path systems
and the theory of VC dimension. A paper of Alon, Moran, and Yehudayoff \cite{AMY}
together with subsequent work of Balogh, Mészáros, and Wagner \cite{BMW}
provide fairly tight bounds on the number of
maximum-VC-classes. Here we place the latter results in the
context of consistent path systems.

Finally, we draw connections between path systems and the notion
of {\em betweenness} from {\em ordered geometry},
e.g., Coxeter's book \cite{Co}. Inspired by the notion of colinear triples in metric spaces,
we introduce the notion of a {\em résumé}. This is a concise depiction of a path system,
which associates every pair of nonadjacent vertices $u,v$ in $G$ to 
{\em some} vertex on the chosen $uv$-path.
Betweenness in metric spaces suggests
a significant perspective that we develop, leading to a characterization
(\Cref{thm:triple_characterization}) of metric and strictly metric
path systems.

\noindent
This is a rich field of study, as manifested by the many open questions that we pose (\Cref{sec:open}).

%\newpage

%\textcolor{red}{why is this here? \cite{CCL2}}
\section{Consistent Path Systems}
A consistent path system $\mathcal{P}$ in a graph $G=(V,E)$ is a collection of simple paths in $G$ such that:
 \begin{enumerate}[label={\arabic*)}]
\item For every pair $u,v\in V$ there is exactly one $uv$-path in $\mathcal{P}$, 
called $P_{u,v}$. The paths $P_{u,v}$ and $P_{v,u}$ are identical. 
\item The non-empty intersection of any two paths in $\mathcal{P}$ is either a single vertex or a path in $\mathcal{P}$.
 \end{enumerate}
A path system $\mathcal{P}$ in $G=(V,E)$ is {\em neighborly} if 
for every edge $uv\in E$ the $(uv)$-path in $\mathcal{P}$ is this edge.
A path system $\mathcal{P}$ in $G=(V,E)$ is {\em metric} if there is a positive weight function \mbox{$w:E\to \mathbb{R}_{>0}$} such that each path in $\mathcal{P}$ is a $w$-shortest path. Similarly, a path system $\mathcal{P}$ of $G=(V,E)$ is {\em strictly metric} if there is a positive weight function \mbox{$w:E\to \mathbb{R}_{>0}$} such that each path in $\mathcal{P}$ is the {\em unique} $w$-shortest path. \\
A consistent path system $\mathcal{P}$ on a vertex set $V$ is a consistent path system defined on the complete graph on $V$. For $G=(V,E)$:
\begin{center}
	\fontsize{12pt}{20pt}\selectfont
	$\mathscr{P}(G)$  denotes the set of all consistent path systems in $G$.\\
	$\mathscr{M}(G)$  denotes the set of all consistent metric path systems in $G$.\\
	$\mathscr{S}(G)$  denotes the set of all strictly metric path systems in $G$.\\
\end{center}
Clearly,
\[\mathscr{S}(G)\subset\mathscr{M}(G)\subset\mathscr{P}(G),\]
and we know by now that both inclusions 
are proper.\\
We use the shorthand $\mathscr{P}_n=\mathscr{P}(K_n)$, 
$\mathscr{M}_n=\mathscr{M}(K_n)$, $\mathscr{S}_n=\mathscr{S}(K_n)$
for the set of all consistent, metric and strictly metric path systems on $n$ vertices, 
respectively.

Let us seek a concise way to represent a
given consistent path system $\mathcal{P}$ on the vertex set $[n]$.
An obvious but inefficient method is to explicitly write out all $\binom{n}{2}$ paths in
$\mathcal{P}$. But there are better approaches: 
for each pair of vertices $1\le u<v \le n$, rather than specifying the entire path $P_{u,v}$, 
record only the vertex, $z$, which is the immediate successor of $u$ along $P_{u,v}$. 
By consistency, $P_{u,v}$ is the concatenation of the edge $uz$ and $P_{z,v}$. 
More generally, if we associate with every $u,v$ a vertex $z\in P_{u,v}\setminus\{u,v\}$, then 
$P_{u,v}=P_{u,z}P_{z,v}$. 
\begin{definition}
	A résumé of a consistent path system $\mathcal{P}$ over $[n]$ is a {\em partial} function $f:\binom{[n]}{2} \to [n]$, s.t.\
	\begin{enumerate}
		\item $\{u,v\} \in \text{dom}(f)$ if and only if $P_{u,v} \neq uv$
		\item  $f(u,v) \in V(P_{u,v})\setminus \{u,v\}$ for all  $\{u,v\} \in \text{dom}(f)$
	\end{enumerate}
	% \[f(u,v) \in \begin{cases}
	% 	V(P_{u,v})\setminus \{u,v\} & P_{u,v} \neq uv\\
	% 	\{\bullet\} & P_{u,v} = uv
	% \end{cases} .\]
	We denote 
	\[\mathcal{R}(\mathcal{P})=\{f:\binom{[n]}{2}\to [n] : f \text{ is a résumé of } \mathcal{P}\}.\]
\end{definition}
Notice that the résumé is uniquely defined for any path system 
in which every path has length at most $2$. In all other cases multiple résumés exist.
As we observe next, a résumé retains all the information of its corresponding path system.
\begin{observation}\label[observation]{obs:map_system}
	Let $\mathcal{P}$ be a consistent path system on $[n]$, and let 
    $f\in \mathcal{R}(\mathcal{P})$ be any corresponding résumé. 
    The following polynomial time algorithm recovers $\mathcal{P}$ from $f$. 
\end{observation}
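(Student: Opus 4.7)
The plan is to define a recursive procedure $\mathrm{Rec}(u,v)$ that returns $P_{u,v}$ directly from $f$, and to invoke it on every pair $\{u,v\}\in\binom{[n]}{2}$. The procedure first checks whether $\{u,v\}\in\mathrm{dom}(f)$. If not, then by the defining condition of a résumé we know $P_{u,v}=uv$, so we return this single edge. Otherwise we set $z:=f(u,v)$ and return the concatenation of $\mathrm{Rec}(u,z)$ with $\mathrm{Rec}(z,v)$. The justification that this reproduces $P_{u,v}$ is immediate from the consistency axiom (\ref{eq:def}): since $z\in V(P_{u,v})\setminus\{u,v\}$, we have $P_{u,v}=P_{u,z}P_{z,v}$, so concatenating the two sub-paths returned by the recursive calls yields exactly $P_{u,v}$.

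Correctness I would establish by induction on the edge-length $\ell$ of $P_{u,v}$. The base case $\ell=1$ is handled by the first branch: by property (1) of a résumé, such pairs lie outside $\mathrm{dom}(f)$, and the algorithm returns the edge $uv$. For the inductive step, since $z$ is an interior vertex of $P_{u,v}$, both $P_{u,z}$ and $P_{z,v}$ are proper sub-paths of $P_{u,v}$ and hence have strictly smaller edge-length; the inductive hypothesis gives that the recursive calls compute them correctly, and consistency guarantees their concatenation equals $P_{u,v}$.

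For the runtime analysis, I would memoize results across the $\binom{n}{2}$ pairs so that each pair is expanded at most once, with each expansion performing a single lookup in $f$ and one $O(n)$-time concatenation of two previously computed paths. This gives a total running time of $O(n^3)$, which is polynomial. No step is genuinely difficult here; the only point worth flagging as a potential obstacle is termination, and that is ensured by the strict decrease of path length at every recursive call, combined with the fact that path lengths are bounded positive integers.
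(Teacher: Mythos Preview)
Your proposal is correct and takes essentially the same approach as the paper: both rely on the observation that consistency lets one split $P_{u,v}$ at $z=f(u,v)$ into the strictly shorter paths $P_{u,z}$ and $P_{z,v}$, and both establish correctness by induction on path length. The only difference is presentational---the paper phrases the algorithm as an iterative bottom-up loop (repeat $n$ times, filling in a path once both sub-paths are defined) whereas you give the equivalent top-down recursion with memoization---but this is the standard duality in dynamic programming and carries no mathematical distinction.
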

% \begin{algorithm}
% 	\caption{Recover Consistent Path System\textcolor{red}{~~$+$ for concatenation?}}
% 	 \begin{algorithmic}
% 	   \Function{Recover}{$f$}\Comment{Where $f:\binom{[n]}{2} \rightharpoonup [n]$ is a résumé}
% 		 \State Let $\text{Paths}$ be an $n\times n$ array of paths initially all undefined
% 		 \ForEach{edge $\{u,v\} \notin \text{dom}(f)$}
% 			 \State $\text{Paths}[u][v] = [u,v]$
% 			 \State $\text{Paths}[v][u] = [v,u]$
% 		 \EndFor
% 		 \ForEach{vertex $v \in [n]$}
% 			 \State $\text{Paths}[v][v] = [v]$
% 		 \EndFor
% 		\RepeatN{$n-1$}
% 			\For{$i=1$ to $n$}
% 				\For{$j=1$ to $n$}
% 					\If{$\text{Paths}[i][f(i,j)]$ and $\text{Paths}[f(i,j)][j]$ are both defined}
% 						\State$\text{Paths}[i][j] = \text{Paths}[i][f(i,j)]+\text{Paths}[f(i,j)][j]$
% 					\EndIf
% 				\EndFor
% 			\EndFor

% 		\End
% 		\State \Return $\text{Paths}$

% 		\EndFunction
%  \end{algorithmic}
%  \label{alg:path_recovery}
%  \end{algorithm}
% Note the similarity between \cref{alg:path_recovery} and the Floyd-Warshall all-pairs shortest path
% algorithm, \cite{F}. 
\noindent
%The following algorithm reconstructs $\mathcal{P}$ from $f\in \mathcal{R}(\mathcal{P})$:
\begin{enumerate}
    \item Initially, no paths $P_{u,v}$ are defined.
    \item For all $uv\notin \text{dom}(f)$ set $P_{u,v} \coloneqq uv$. 
    \item Repeat $n$ times: If $z=f(u,v)$ and $P_{u,z}$ and $P_{z,v}$ are defined set $P_{u,v} \coloneqq P_{u,z}P_{z,v}$.   
\end{enumerate}
The algorithm relies on the fact that if $P_{u,v}\neq uv$, 
then $P_{u,v}$ can be expressed as the concatenations of the two strictly shorter paths 
$P_{u,f(u,v)}$ and $P_{f(u,v),v}$. Initially, only single edge paths are placed
in $\mathcal{P}$. After the $k$-th iteration of the loop in step 3 all the paths in $\mathcal{P}$ of length at most $k+1$ are recovered.

% We can ask the converse: when does a function $f:\binom{[n]}{2} \to [n]$ correspond to a consistent path system? For $u\in[n]$, we define $f_u : [n]\to [n]$, to be the function where
% \[f_u(v) \coloneqq \begin{cases}
% 	u & v = u \\
% 	u & f(v,u) = v \\
% 	f(u,v) & \text{otherwise}
% \end{cases}.\]
%  We say $f$ is {\em cycle free} if for all $u\in [n]$ there exists $m\ge 1$  s.t. for all $v\in [n]$, $f^m_{u}(v) = u$. If $f$ is cycle free, let $g_f: [n]\times [n]\to [n]$ be the map s.t. 
%  \[g_f(v,u) \coloneqq \begin{cases}
% 	v & f_u(v) = u \\
% 	f^m_u(v) & \text{ where } m \ge 1 \text{ is the largest integer s.t. } f_u^m(v) \neq u.
%  \end{cases}\] 
% \begin{lemma}
% 	A map $f:\binom{[n]}{2}\to [n]$ is consistent if and only if
% 	\begin{enumerate}
% 		\item $f$ is cycle free
% 		\item For all $u,v \in [n]$, either $g_f(v,u)=v$ or $g_f(v,u) = g_f(g_f(u,v),u)$
% 	\end{enumerate}
% \end{lemma}

% \begin{proposition}
% 	Given $f:\binom{[n]}{2}\to [n]$ it can be decided in $O(n^2)$ operations if $f$ is consistent.
% \end{proposition}
\section{The Number of Consistent Path Systems}
We now estimate the number of consistent path systems on $n$ vertices. The {\em diameter}
of a path system is the largest length of a path in the system.
We make the following easy observation:

\begin{observation}
Every neighborly path system of diameter $2$ is necessarily consistent. 
\end{observation}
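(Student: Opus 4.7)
The plan is to verify the concatenation form of consistency, namely equation~(\ref{eq:def}): whenever $a\in P_{u,v}$ we must have $P_{u,v}=P_{u,a}\cdot P_{a,v}$. Since every path in the system has length at most $2$, this reduces to a very short case analysis on the length of $P_{u,v}$, and the neighborly hypothesis will do essentially all the work.

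First I would dispose of the trivial cases. If $a\in\{u,v\}$, then one of $P_{u,a}$, $P_{a,v}$ is degenerate and the identity $P_{u,v}=P_{u,a}\cdot P_{a,v}$ holds tautologically; in particular this covers the situation when $P_{u,v}=uv$ is a single edge, since there $a$ has no other options.

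The only remaining case is $P_{u,v}=uzv$ of length~$2$ with $a=z$. Here the key observation is that both $uz$ and $zv$ are edges of $G$, for they are the two edges of the path $P_{u,v}$. Neighborliness then forces $P_{u,z}=uz$ and $P_{z,v}=zv$, so their concatenation is exactly $uzv=P_{u,v}$, as required.

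If instead one prefers the intersection formulation of consistency from the Section~2 definition, the argument is similarly short: any two paths of length at most $2$ whose intersection contains at least two vertices must in fact share an edge, since otherwise those two common vertices would be the endpoints of both paths, and uniqueness together with neighborliness (which would collapse the length-$2$ path to a single edge) yields a contradiction. A shared edge $e$ is itself a path $P_e=e$ in $\mathcal{P}$ by neighborliness, so the intersection is a member of $\mathcal{P}$. I do not anticipate a real technical obstacle here: the entire observation rests on the fact that a length-$2$ path has only two edges and a single internal vertex, and the neighborly hypothesis pins down the system's chosen paths between every pair of those vertices.
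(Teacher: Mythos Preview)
Your argument is correct: the concatenation check is the cleanest route, and your case analysis covers everything. The paper itself offers no proof of this observation---it is stated as self-evident and then simply invoked in the proof of Lemma~\ref{lem:diam2}---so there is nothing to compare against; your write-up supplies exactly the routine verification the paper omits.
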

Let $\mathcal{D}_k(G)$ denote the collection of neighborly consistent 
path systems of diameter $k$ in $G$. 

\begin{lemma}\label[lemma]{lem:diam2}
	For any graph $G=(V,E)$ 
	\[|\mathcal{D}_2(G)| =\prod_{uv\notin E} |N(u)\cap N(v)|.\]
\end{lemma}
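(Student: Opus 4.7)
The plan is to set up an explicit bijection between $\mathcal{D}_2(G)$ and the Cartesian product $\prod_{uv \notin E}\bigl(N(u)\cap N(v)\bigr)$, and then just count.

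First I would unpack what a neighborly path system of diameter $2$ looks like. Neighborliness forces $P_{u,v} = uv$ whenever $uv \in E$, so these paths are determined and carry no information. For a non-edge $\{u,v\}$ the path $P_{u,v}$ has length at least $2$, and by the diameter-$2$ hypothesis, exactly $2$; hence $P_{u,v} = u z v$ for some vertex $z$, and since $uz$ and $zv$ are edges of $G$ we must have $z \in N(u)\cap N(v)$. Conversely, any choice of a common neighbor $z_{u,v}\in N(u)\cap N(v)$ for each non-edge $\{u,v\}$ specifies a neighborly path system of diameter $2$ on $G$: simply declare $P_{u,v}=uv$ for edges and $P_{u,v}=u z_{u,v} v$ for non-edges. (If $N(u)\cap N(v)=\emptyset$ for some non-edge then both sides of the claimed identity equal $0$, so the formula holds trivially.)

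Second, I would observe that every path system so constructed actually lies in $\mathcal{D}_2(G)$, i.e.\ is consistent. This is exactly the preceding observation in the paper, which states that every neighborly path system of diameter $2$ is automatically consistent, so no verification is needed at this stage.

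Third, the map sending a choice function $\{u,v\}\mapsto z_{u,v}$ to its associated path system is clearly injective: two different choice functions must disagree on some non-edge $\{u,v\}$, giving different middle vertices and hence different paths $P_{u,v}$. Combined with the previous paragraphs, this shows the map is a bijection, and counting gives
\[
|\mathcal{D}_2(G)| \;=\; \prod_{uv \notin E}|N(u)\cap N(v)|.
\]

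Since everything reduces to the preceding observation plus the diameter-$2$ structural description of paths, there is really no technical obstacle here; the only point that needs care is the convention when some common-neighbor set is empty, and I would dispatch this by noting both sides vanish in that case.
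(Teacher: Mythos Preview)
Your proposal is correct and follows essentially the same approach as the paper: both argue that neighborliness fixes the edge-paths, diameter $2$ forces every non-edge path to be $u z v$ with $z\in N(u)\cap N(v)$, the preceding observation guarantees consistency for free, and the empty-common-neighbor case makes both sides vanish. The paper phrases the empty case as $\mathrm{diam}(G)\ge 3$, but otherwise the arguments are the same.
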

\begin{proof}
If $\text{diam}(G)\ge 3$, the statement is clearly correct, since both sides 
of the equation vanish.
If $\mathcal{P}\in \mathcal{D}_2(G)$, then for
every pair $u,v\in V$, $P_{u,v}$ 
is either an edge or path of length $2$. Since $\mathcal{P}$ is neighborly,
$|P_{u,v}|=2$ if and only if $u$ and $v$ are not neighbors. But $|N(u)\cap N(v)|$ is 
the number of paths of length-$2$ 
between $u$ and $v$. Moreover,
as just observed,
any neighborly path system of 
diameter $2$ is automatically consistent. Therefore, we may choose any path of length $2$
for every non-adjacent pair to obtain a consistent path system.
The conclusion follows.
\end{proof}

The next lemma speaks about
the number of neighborly diameter-$2$ path systems on Erd\"{o}s-Renyi $G(n,p)$ graphs
using standard concentrations inequalities in this domain.  
Any information that we use throughout about $G(n,p)$ graphs
can be found in \cite{FK}.

\begin{lemma}\label[lemma]{lem:diam2Gnp}
If $G\sim G(n,p)$, where $p \ge \Omega(\frac{1}{\log n})$, then a.a.s.\
	\[\log |\mathcal{D}_2(G)|=(1-p)\frac{n^2}{2} \log (p^2n)\left(1+o(1)\right) .\]
\end{lemma}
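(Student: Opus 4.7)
The plan is to apply Lemma \ref{lem:diam2} and then control the two random quantities on the right-hand side by standard Chernoff bounds on $G(n,p)$. The lemma gives
\[
\log |\mathcal{D}_2(G)| = \sum_{uv \notin E} \log |N(u) \cap N(v)|,
\]
so it suffices to estimate (i) each codegree $|N(u)\cap N(v)|$, uniformly across all pairs, and (ii) the total number of non-edges $|\bar E|$.

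For the codegrees, I would fix a pair $\{u,v\}$ and write $|N(u) \cap N(v)| = \sum_{w \ne u,v} \mathbf{1}[uw \in E]\,\mathbf{1}[vw \in E]$, a sum of $n-2$ independent $\text{Bernoulli}(p^2)$ variables with mean $\mu = (n-2)p^2$; note that this sum is independent of the status of $uv$ itself. The hypothesis $p \ge \Omega(1/\log n)$ forces $\mu = \Omega(n/\log^2 n) \gg \log n$, so a multiplicative Chernoff bound with deviation parameter $\varepsilon_n = \omega(\sqrt{\log n/\mu})$, which can still be taken $o(1)$, combined with a union bound over the $\binom{n}{2}$ pairs, yields
\[
|N(u)\cap N(v)| = (1 \pm o(1))\, p^2 n \qquad \text{for every pair } \{u,v\} \text{ a.a.s.}
\]
Since $p^2 n \to \infty$ under the hypothesis, $\log(p^2 n) \to \infty$, so taking logarithms converts this into $\log |N(u)\cap N(v)| = (1+o(1))\log(p^2 n)$ uniformly.

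For the non-edge count, $|\bar E|$ is binomial with mean $(1-p)\binom{n}{2}$, and a standard binomial Chernoff bound gives $|\bar E| = (1+o(1))(1-p)\binom{n}{2}$ a.a.s., provided $(1-p)n^2 \to \infty$; in the remaining regime, where $1-p$ is extremely small, both sides of the claim vanish and the estimate follows directly from the uniform codegree bound applied to the (deterministically few) non-edges. Multiplying the two estimates gives
\[
\log |\mathcal{D}_2(G)| = (1+o(1))(1-p)\binom{n}{2}\cdot \log(p^2 n) = (1-p)\tfrac{n^2}{2}\log(p^2 n)(1+o(1)).
\]

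The only place requiring care is the uniform codegree concentration: the relative error must remain $o(1)$ even after a union bound over $\binom{n}{2}$ pairs, which is exactly what the hypothesis $p \ge \Omega(1/\log n)$ buys, since it guarantees that the Chernoff exponent $\varepsilon_n^2\mu$ dominates $\log n$ for some $\varepsilon_n = o(1)$. Once this uniform control is established, the lemma follows from multiplying the two independent concentration estimates and reading off the leading order.
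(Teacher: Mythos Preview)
Your proof is correct and matches the paper's approach exactly: the paper states the lemma without a full proof, indicating only that it follows from \Cref{lem:diam2} together with standard $G(n,p)$ concentration facts (specifically, that $|E(G)|=(1+o(1))p\binom{n}{2}$ and that every codegree satisfies $|N(u)\cap N(v)|=(1+o(1))p^2 n$), which is precisely the two-step Chernoff argument you carry out. Your treatment is in fact more careful than what the paper records, since you verify that the hypothesis $p\ge\Omega(1/\log n)$ is exactly what is needed for the union bound over all $\binom{n}{2}$ pairs and you handle the boundary case $1-p\to 0$.
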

\begin{comment}   
\begin{proof}
To prove this lemma we use \cref{lem:diam2} along with some standard concentration results in 
$G(n,p)$ graphs. Specifically, in the present range of $p=p(n)$ the following hold a.a.s.,
\textcolor{red}{anything non-obvious here?}
	\begin{enumerate}
		\item $|E(G)| = (1+o(1))\cdot p\binom{n}{2}$
		\item For all $u,v\in [n]$, $|N(u)\cap N(v)| = (1+o(1))\cdot p^2n$
	\end{enumerate}
These facts follow from \cref{thm:max_VC_classes} in \cref{sec:vc},
as well as \cref{lem:diam2}.
\end{proof}
\end{comment}

This yields:
\begin{theorem}\label[theorem]{thm:path_system_count}
We have  
\[ n^{\frac{n^2}{2}(1-o(1))} \le|\mathscr{M}_n|\le |\mathscr{P}_n| \le n^{\frac{n^2}{2}},\]
where $\mathscr{P}_n$ resp.\
$\mathscr{M}_n$ is the set of all 
consistent resp.\ metric path systems 
on $n$ vertices. 
	% Similarly, letting $\mathscr{M}_n$ denote the set of all metric consistent path systems on $n$ vertices we have $|\mathscr{M}_n| = n^{\frac{n^2}{2}(1+o(1))}$.
\end{theorem}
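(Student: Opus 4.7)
The plan splits naturally into the upper and lower bound; the middle inequality $|\mathscr{M}_n|\le|\mathscr{P}_n|$ is immediate since every metric path system is consistent. For the upper bound $|\mathscr{P}_n|\le n^{n^2/2}$, I appeal directly to the résumé representation established in \Cref{obs:map_system}. Every consistent path system on $[n]$ admits at least one résumé, and by that observation each résumé uniquely reconstructs its path system; hence the map from résumés to $\mathscr{P}_n$ is surjective. Since a résumé assigns to each of the $\binom{n}{2}$ unordered pairs either the symbol ``undefined'' or one of at most $n-2$ interior vertices, the number of résumés is at most $(n-1)^{\binom{n}{2}}\le n^{n^2/2}$, which settles the upper bound.

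For the lower bound $|\mathscr{M}_n|\ge n^{(n^2/2)(1-o(1))}$, the idea is to produce many metric path systems on $K_n$ from neighborly diameter-$2$ consistent path systems on a suitably chosen sparse subgraph. I would take $G\sim G(n,p)$ with $p=p(n)=1/\log n$. Standard concentration estimates guarantee that a.a.s.\ $G$ has diameter $2$ and that every non-edge has $(1+o(1))p^2n$ common neighbors, so \Cref{lem:diam2Gnp} applies and yields
\[\log|\mathcal{D}_2(G)|=(1-o(1))\cdot\frac{n^2}{2}\cdot\log(p^2n)=(1-o(1))\cdot\frac{n^2}{2}\log n,\]
using $\log(p^2n)=\log n-2\log\log n=(1-o(1))\log n$.

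It remains to lift each $\mathcal{P}\in\mathcal{D}_2(G)$ to a distinct metric path system on $K_n$. For this, weight every edge of $G$ by $1$ and every non-edge of $G$ (now regarded as an edge of $K_n$) by some fixed $M>2$. Then for $uv\in E(G)$ the direct edge is a $w$-shortest $uv$-path of weight $1$; for $uv\notin E(G)$, any path $u{-}z{-}v$ through a common $G$-neighbor $z$ has weight $2$, whereas every other $uv$-path in $K_n$ either has length $\ge 3$ in $G$ (weight $\ge 3$) or uses at least one non-$G$-edge (weight $\ge M>2$). Consequently, each choice of $P_{u,v}$ prescribed by $\mathcal{P}$ is a $w$-shortest path, so $\mathcal{P}$ extends to a metric path system on $K_n$; the extension is injective on $\mathcal{D}_2(G)$ because the path systems already differ on pairs in $V(G)$. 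This gives $|\mathscr{M}_n|\ge|\mathcal{D}_2(G)|=n^{(n^2/2)(1-o(1))}$.

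The bulk of the work is hidden inside \Cref{lem:diam2Gnp}; given that lemma, the main step requiring care is only the choice of $p$ (slow enough to diameter-$2$ and ensure $\log(p^2n)=(1-o(1))\log n$, fast enough to have concentration of common-neighborhood sizes), and verifying that the simple two-value weight assignment on $K_n$ genuinely makes every prescribed length-$2$ path into a $w$-shortest path.
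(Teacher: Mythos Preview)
Your proof is correct and follows essentially the same approach as the paper's: the upper bound via the r\'esum\'e map of \Cref{obs:map_system}, and the lower bound via \Cref{lem:diam2Gnp} applied to $G\sim G(n,1/\log n)$. Your explicit two-value weighting on $K_n$ (edges of $G$ get weight $1$, non-edges get weight $M>2$) spells out what the paper states more tersely, namely that $\mathcal{D}_2(G)\subset\mathscr{M}_n$ because any diameter-$2$ path system is induced by unit edge weights.
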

\begin{proof}	
Clearly, $|\mathscr{M}_n|\le |\mathscr{P}_n|$. 
By \cref{obs:map_system}, 
$|\mathscr{P}_n|$ is bounded by
the number of maps from 
$\binom{[n]}{2}$ to $[n]$, 
namely, $n^{\binom{n}{2}}$.\\

On the other hand, by \cref{lem:diam2Gnp} for $G\sim G(n,\frac{1}{\log n})$ there holds
\[n^{\frac{n^2}{2}(1+o(1))} = |\mathcal{D}_2(G)|.\]
Finally, $\mathcal{D}_2(G) \subset \mathscr{M}_n$, because any path system of 
diameter $2$ is induced by assigning a unit weight to every edge.
\end{proof}

\section{Metrizability and Betweenness}\label{sec:s_met}
Arguably, (strictly) metric path systems constitute the most natural instances of consistency in path systems. This means
that the paths in the system can be realized as the (unique) shortest paths w.r.t.\ some 
edge-weighted $K_n$. We offer a straightforward but useful characterization 
of (strict) metrizability in terms of metrics on $V$. \\
We start with a definition:
a {\em pointed triple} $\{a,b;c\}$ with distinct elements $a,b,c\in V$,
is the set $\{a,b,c\}$ where the point $c$ is distinguished from $a,b$.
Let $\binom{V}{3}^{\ast}$ denote the set of pointed triples over $[n]$
and note that $\big|\binom{V}{3}^{\ast}\big| = 3\binom{|V|}{3}$.
We say the (pointed) triple $\{a,b;c\}$ is {\em colinear} w.r.t.\ a metric
$\rho:V\times V\to \mathbb{R}_{>0}$, if
$\rho(a,b) = \rho(a,c) + \rho(c,b)$, i.e., the triangle inequality holds with 
equality for 
$\{a,b;c\}$. Let $T(\rho)$ denote the set of all $\rho$-colinear pointed triples. 
Similarly, for a consistent path system $\mathcal{P}$ let 
$T(\mathcal{P}) = \{\{a,b;c\}\in \binom{V}{3}^{\ast}: c\in P_{a,b}\}$, i.e., $T(\mathcal{P})$ is the collection of pointed triples which are `colinear' w.r.t.\ the paths in $\mathcal{P}$. 
\begin{theorem}\label[theorem]{thm:triple_characterization}
Let $\mathcal{P}$ be 
a path system on vertex set $V$ 
\begin{enumerate}
\item 
$\mathcal{P}$ is strictly metric if and 
only if $T(\mathcal{P})= T(\rho)$ for some metric $\rho$ on $V$.
\item
$\mathcal{P}$ is metric if and only if $T(\mathcal{P}) \subseteq T(\rho)$
for some metric $\rho$ on $V$. 
\end{enumerate}
\end{theorem}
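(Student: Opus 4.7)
My plan is to prove the four implications in turn. For the forward directions I would take $\rho$ to be the shortest-path metric induced by the weights $w$ that realize $\mathcal{P}$ as (unique) $w$-shortest paths. Then for any $c \in P_{a,b}$, consistency gives $P_{a,b} = P_{a,c} P_{c,b}$ with each sub-path being a $w$-shortest path, so $\rho(a,c) + \rho(c,b) = \rho(a,b)$, proving $T(\mathcal{P}) \subseteq T(\rho)$. For the strict case I would also establish the reverse inclusion: if $\rho(a,c) + \rho(c,b) = \rho(a,b)$, then concatenating the unique $w$-shortest $ac$- and $cb$-paths yields an $ab$-walk of length $\rho(a,b)$, which must coincide with the unique $w$-shortest $ab$-path $P_{a,b}$, so $c \in P_{a,b}$.

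For the backward direction of (2), I would simply set $w(uv) := \rho(u,v)$ on every edge of $K_n$. A short induction on the length of $P_{a,b} = x_0 x_1 \cdots x_k$, using the triples $\{a,b; x_i\} \in T(\mathcal{P}) \subseteq T(\rho)$ together with consistency, yields that the $w$-length of $P_{a,b}$ equals $\rho(a,b)$. The triangle inequality for $\rho$ gives that every $ab$-path has $w$-length at least $\rho(a,b)$, so $P_{a,b}$ is a $w$-shortest $ab$-path and $\mathcal{P}$ is metric.

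The main obstacle is the backward direction of (1), since the choice $w = \rho$ does not give uniqueness: a path of the form $x_0 x_2 x_4 \cdots x_k$ obtained by skipping vertices of $P_{a,b}$ has the same $w$-length as $P_{a,b}$. I would fix this with a small perturbation, setting $w(uv) := \rho(u,v)$ for every edge $uv$ that appears in some path of $\mathcal{P}$, and $w(uv) := \rho(u,v) + \varepsilon$ for every other edge of $K_n$. The computation above still gives $w$-length $\rho(a,b)$ for $P_{a,b}$, since every edge of $P_{a,b}$ is a $\mathcal{P}$-edge. For any alternative $ab$-path $Q$, either $Q$ uses a non-$\mathcal{P}$-edge, making its $w$-length at least $\rho(a,b) + \varepsilon$; or $Q$ uses only $\mathcal{P}$-edges, making its $w$-length equal to its $\rho$-length, which is at least $\rho(a,b)$, with equality forcing $\{a,b; y_i\} \in T(\rho) = T(\mathcal{P})$ for every vertex $y_i$ of $Q$, so that each $y_i$ lies on $P_{a,b}$ (this is precisely where I use the hypothesis $T(\rho) \subseteq T(\mathcal{P})$, which is not available in part~(2)). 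A standard ordering argument then shows the $y_i$ appear along $P_{a,b}$ in increasing order, and any shortcut edge $x_j x_k$ used by $Q$ with $k > j+1$ would, being a $\mathcal{P}$-edge, force $P_{x_j, x_k}$ to be simultaneously the single edge $x_j x_k$ and the length-$(k-j)$ sub-path of $P_{a,b}$---contradicting uniqueness of $P_{x_j, x_k}$. Hence $Q = P_{a,b}$, completing the construction.
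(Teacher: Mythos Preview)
Your proof is correct and follows essentially the same route as the paper's: the forward directions use the shortest-path metric induced by $w$, and the backward direction of~(1) uses the weight function $w(uv)=\rho(u,v)$ on $\mathcal{P}$-edges with a penalty on all other edges. The only cosmetic differences are that the paper uses $w(uv)=\infty$ rather than $\rho(u,v)+\varepsilon$ on non-$\mathcal{P}$-edges, and organizes the case analysis for a competing path $Q$ as ``$V(Q)\subseteq V(P_{a,b})$'' versus ``$V(Q)\not\subseteq V(P_{a,b})$'' (invoking non-colinearity of an off-path vertex directly), whereas you reach the same conclusions via the equality case of the triangle inequality and your ordering/no-shortcut argument; the underlying observations are identical.
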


%\begin{proof}
%	That (1) implies (2) is clear. Now assume (2) holds. We will show (1) holds by induction on $k$. The case $k=2$ reduces to (2). Now suppose $k+1>2$ and $0\le i_0< i_1\cdots < i_{k+1} \le m$. By (2) we have $d(x_{i_0},x_{i_{k+1}}) = d(x_{i_0},x_{i_{k}}) + d(x_{i_k},x_{i_{k+1}})$, and by the induction hypothesis $ d(x_{i_0},x_{i_{k}}) =\sum_{j=1}^{k} d(x_{i_{j-1}},x_{i_{j}})$. It follows that 
%	\[d(x_{i_0},x_{i_{k+1}}) = d(x_{i_0},x_{i_{k}}) + d(x_{i_k},x_{i_{k+1}}) = \sum_{j=1}^{k+1} d(x_{i_{j-1}},x_{i_{j}}), \] as desired.
%\end{proof}
\begin{proof}(\Cref{thm:triple_characterization})
We only prove (1), since the proof for (2) is similar. Let $\mathcal{P}$ be a strictly metric path system that is strictly induced 
by a weight function $w:\binom{V}{2} \to \mathbb{R}_{>0}$. We show that $T(\mathcal{P})= T(\rho)$, where
$\rho$ is the shortest path metric that $w$ induces. Namely, 
$\rho(a,b) = w(P_{a,b})$ for all $a,b\in V$.\\
Indeed, if $\{a,c;b\}\in T(\mathcal{P})$ then $b\in P_{a,c}$. 
In particular $\rho(a,c) = w(P_{a,c})= w(P_{a,b})+w(P_{b,c}) = \rho(a,b)+\rho(b,c)$,
and $\{a,c;b\}\in T(\rho)$.\\
If $\{a,c;b\}\notin T(\mathcal{P})$ then the $(ac)$-path
$P_{a,b}P_{b,c}$ differs from $P_{a,c}$. It follows 
that $\rho(a,c) = w(P_{a,c}) < w(P_{a,b})+w(P_{b,c}) = \rho(a,b)+\rho(b,c)$
and $\{a,c;b\}\notin T(\rho)$.\\
For the reverse implication, let $\rho$ be a metric whose set of
colinear triples coincides with 
$T(\mathcal{P})$. We show that 
$\mathcal{P}$ is the strictly metric path system that is induced by the following weight function: for $a,b\in V$
\begin{equation}\label{def_of_w}
w(ab) = \begin{cases}
\rho(a,b) & P_{a,b} = ab \\
\infty & \text{otherwise}
\end{cases}.\end{equation}
As we show, every path in $\mathcal{P}$ is the unique $w$-geodesic.
To this end let us make
the following simple observation: If $x_0,x_1,\dots,x_m$ are elements
in a metric space $(V, \Delta)$, then the following are equivalent:
\begin{enumerate}[label=(\roman*)]
	\item For all $0\le i_0< i_1\cdots < i_k \le m$, $\Delta(x_{i_0},x_{i_k}) = \sum_{j=1}^{k} \Delta(x_{i_{j-1}},x_{i_{j}})$
	\item For all $0\le i< j < k \le m$, $\Delta(x_i, x_k)=\Delta(x_i, x_j) + \Delta(x_j, x_k)$
\end{enumerate}
    For $a,b\in V$, by (\ref{def_of_w}) and  the above observation 
	\[w(P_{a,b}) = \sum_{uv\in P_{a,b}} w(uv) = \sum_{uv\in P_{a,b}} \rho(u,v)  = \rho(a,b).\]
Let $Q$ be any other $(ab)$-path. Clearly, $w(Q)=\infty$ unless all of $Q$'s edges are in $\mathcal{P}$. Let us turn to $Q$'s vertices. If $V(Q)\subseteq V(P_{a,b})$ but $Q\neq P$, then $E(Q) \not\subseteq E(P_{a,b})$. But then, any edge $e\in E(Q) \setminus E(P_{a,b})$ is, 
by consistency, not in $\mathcal{P}$, so that $w(Q) = \infty$.\\ Assume next that 
$V(Q)\not\subseteq V(P_{a,b})$. As mentioned, by assumption all of $Q$'s edges are in $\mathcal{P}$. Any vertex $c\in V(Q)\setminus V(P_{a,b})$ splits $Q$ into a concatenation of two paths $Q_1 = x_0x_1\dots x_k$ and $Q_2 = y_0\dots y_m$, where $x_0=a$, $x_k=y_0=c$ and $y_m=b$. But then
    \begin{equation*}
w(Q) = w(Q_1) + w(Q_2) 
= \sum_{i=1}^{k}w(x_{i-1}x_i) +\sum_{i=1}^{m}w(y_{i-1}y_i)
=\sum_{i=1}^{k}\rho(x_{i-1},x_i) +\sum_{i=1}^{m}\rho(y_{i-1},y_i).
	\end{equation*}
	%\begin{equation*}
	%	\begin{split}
	%		w(Q)& = w(Q_1) + w(Q_2) \\
	%		&= \sum_{i=1}^{k}w(x_{i-1}x_i) +\sum_{i=1}^{m}w(y_{i-1}y_i)\\
			%&=\sum_{i=1}^{k}d(x_{i-1},x_i) +\sum_{i=1}^{m}d(y_{i-1},y_i).
		%\end{split}
	%\end{equation*}
But since $c\notin P_{a,b}$ the triple $(a,c,b)$ is not $\rho$-colinear, so by the triangle inequality
\begin{equation*}
w(P_{a,b}) = \rho(a,b) < \rho(a,c) + \rho(c,b) 
\le \sum_{i=1}^{k}\rho(x_{i-1},x_i) +\sum_{i=1}^{m}\rho(y_{i-1},y_i)= w(Q),
	\end{equation*}
\end{proof}
As shown in \cite{Bo} and \cite{CL}, the metrizability of a given path system  can be decided in polynomial time using linear programming. In \cite{Bo}, this problem was rephrased as a multi-commodity flow problem with $O(n^3)$ variables and $O(n^4)$ constraints. In \cite{CL}, the relevant LP has exponentially many constraints and is solved efficiently using the ellipsoid method. Here, we offer a simpler linear program, with $O(n^2)$ variables and $O(n^3)$ constraints. 
\begin{corollary}\label[corollary]{cor:simple_LP}
	Let $\mathcal{P}$ be a consistent path system on $V$. For every $a\neq b\in [n]$ define a variable $x_{a,b}=x_{b,a}$, and consider the following system of inequalities:
	\begin{equation*}
		\begin{split}
			 x_{a,c}  + x_{c,b} - x_{a,b}&\ge s \hspace{5mm}a,b,c\in V, \ c\notin P_{a,b}\\
			 x_{a,c} + x_{c,b} -x_{a,b}&= 0 \hspace{5mm}a,b,c\in V, \ c\in P_{a,b}\\
			x_{a,b} &\ge 1 \hspace{5mm}a,b\in[n]
		\end{split}
	\end{equation*}
	When $s=0$ the above system is feasible if and only if $\mathcal{P}$ is metric. When $s=1$ the above system is feasible if and only if $\mathcal{P}$ is strictly metric. 
\end{corollary}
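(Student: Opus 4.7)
The plan is to reduce the corollary directly to Theorem \ref{thm:triple_characterization} by interpreting the LP variables $x_{a,b}$ as a candidate metric on $V$. The three families of constraints encode, respectively: the (possibly strict) triangle inequality, the colinearity of every triple forced to be colinear by $\mathcal{P}$, and positivity with a convenient normalization.

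First I would prove the ``feasible $\Rightarrow$ (strictly) metric'' direction. Given a feasible solution, the symmetry $x_{a,b}=x_{b,a}$ together with $x_{a,b}\ge 1>0$ and the first family of constraints (with $s\ge 0$) show that $\rho(a,b):=x_{a,b}$ is a metric on $V$. The equality constraints say that every pointed triple $\{a,b;c\}$ with $c\in P_{a,b}$ is $\rho$-colinear, so $T(\mathcal{P})\subseteq T(\rho)$; when $s=1$ the inequality constraints force the triangle inequality to be strict for every pointed triple outside $T(\mathcal{P})$, giving equality $T(\mathcal{P})=T(\rho)$. Applying Theorem \ref{thm:triple_characterization}(2) (resp.\ (1)) then yields that $\mathcal{P}$ is metric when $s=0$ and strictly metric when $s=1$.

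For the converse, Theorem \ref{thm:triple_characterization} supplies a metric $\rho$ on $V$ with $T(\mathcal{P})\subseteq T(\rho)$ in the metric case and $T(\mathcal{P})=T(\rho)$ in the strictly metric case. The only thing preventing $\rho$ from being a feasible solution is the normalization: we may have $\rho(u,v)<1$ somewhere, and in the strict case the slacks $\rho(a,c)+\rho(c,b)-\rho(a,b)$ for $\{a,b;c\}\notin T(\mathcal{P})$ may be less than $1$. Both obstructions are removed at once by a single rescaling $x_{a,b}:=\lambda\rho(a,b)$ with
\[
\lambda \;:=\; \max\Bigl\{\tfrac{1}{\min_{u\neq v}\rho(u,v)},\ \tfrac{1}{\delta}\Bigr\},\qquad \delta\;:=\;\min\bigl\{\rho(a,c)+\rho(c,b)-\rho(a,b)\,:\,\{a,b;c\}\notin T(\rho)\bigr\},
\]
where the second term in the maximum is omitted when $s=0$. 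Scaling preserves equalities and positivity is guaranteed by the first term; the second term ensures the inequality slacks become at least $1$ in the strict case, matching $s=1$.

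I do not expect any real obstacle. The only mildly subtle point is that in the strict case $\delta>0$ must actually be attained: this is immediate since $V$ is finite and each non-colinear triple contributes a strictly positive quantity to the minimum, which is precisely where the assumption $T(\mathcal{P})=T(\rho)$ (rather than the weaker inclusion) from Theorem \ref{thm:triple_characterization}(1) is used.
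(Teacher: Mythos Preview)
Your proposal is correct and follows essentially the same approach as the paper: both reduce the corollary to Theorem~\ref{thm:triple_characterization} by identifying a feasible $x$ with a metric $\rho$ whose colinear triples match $T(\mathcal{P})$ appropriately. The paper's proof is a two-line sketch that leaves the rescaling step (turning an arbitrary witnessing metric into one satisfying $x_{a,b}\ge 1$ and, for $s=1$, slacks~$\ge 1$) implicit; you have simply made that step explicit.
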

\begin{proof}
    When $s=1$ this system feasible if and only if there exists a metric $\rho$ such that $T(\mathcal{P})=T(\rho)$. Similarly, when $s=0$ this system feasible if and only if there exists a metric $\rho$ such that $T(\mathcal{P})\subseteq T(\rho)$. The claim follows from \cref{thm:triple_characterization}.
\end{proof}
In fact, more can be said about testing whether some path system $\mathcal{P}$ is strictly metric. Recall that a {\em psuedometric} on $V$ is a function $\rho:V\times V\to \mathbb{R}_{+}$ which 
satisfies all of the properties of a metric except that it allows for $\rho(x,y)=0$ 
even if $x\neq y$. The following characterization says that we may drop the positivity assumption when trying to determine if a path system is strictly metric. We emphasize that this is not true for the non-strict case.
	\begin{corollary}
		\label[corollary]{cor:semi_met_strict}
The following are equivalent for a path system $\mathcal{P}$ on $V$.
\begin{enumerate}[label = {(\arabic*)}]
\item $\mathcal{P}$ is strictly metric.
\item There is a metric $\rho$ on $V$ s.t.\ $T(\rho) = T(\mathcal{P})$.
\item The following system of inequalities with variables $x_{a,b}=x_{b,a}$, $a\neq b\in [n]$, is feasible:
		\begin{equation*}
			\begin{split}
				 x_{a,c}  + x_{c,b} - x_{a,b}&>0 \hspace{5mm}a,b,c\in [n], \ c\notin P_{a,b}\\
				 x_{a,c} + x_{c,b} -x_{a,b}&= 0 \hspace{5mm}a,b,c\in [n], \ c\in P_{a,b}
			\end{split}
		\end{equation*}

	\end{enumerate}	
	\end{corollary}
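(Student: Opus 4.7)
The plan is to close the cycle of implications $(1)\Rightarrow(2)\Rightarrow(3)\Rightarrow(1)$. The first two implications are short. For $(1)\Rightarrow(2)$, invoke \Cref{thm:triple_characterization}(1) to obtain a metric $\rho$ with $T(\rho)=T(\mathcal{P})$, and observe that every metric is in particular a pseudometric. For $(2)\Rightarrow(3)$, start from such a pseudometric: since $T(\rho)=T(\mathcal{P})$, every non-colinear pointed triple $\{a,b;c\}$ has strictly positive slack $\rho(a,c)+\rho(c,b)-\rho(a,b)$. Let $\varepsilon>0$ be the minimum of this slack over the finitely many non-colinear triples and set $x_{a,b}:=\rho(a,b)/\varepsilon$; the equality and $\ge 1$ constraints of the LP then follow immediately.

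The substance of the corollary lies in $(3)\Rightarrow(1)$. Given a feasible solution $\{x_{a,b}\}$, I would like to set $\rho(a,b):=x_{a,b}$, check that this is a genuine metric with $T(\rho)=T(\mathcal{P})$, and invoke \Cref{thm:triple_characterization}(1). The equality and inequality constraints of the LP directly give the triangle inequality, with equality exactly on $T(\mathcal{P})$, so $T(\rho)=T(\mathcal{P})$ is automatic. The main obstacle is that the LP does not explicitly enforce any sign condition on the $x_{a,b}$, so positivity must be extracted from the combinatorial structure of $\mathcal{P}$.

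To establish $x_{a,b}>0$ for every edge of $\mathcal{P}$ (i.e.\ every pair with $P_{a,b}=ab$), fix such a pair and any third vertex $c$ (possible whenever $n\ge 3$; the case $n\le 2$ is trivial). Consider the three LP constraints for the pointed triples $\{a,b;c\}$, $\{a,c;b\}$, $\{b,c;a\}$. Since $P_{a,b}=ab$, the first is a strict inequality. Consistency plus simplicity of paths rules out having both $b\in P_{a,c}$ and $a\in P_{b,c}$ simultaneously, for then by consistency $P_{a,c}=(ab)\,P_{b,c}$ would revisit the vertex $a$. Hence at least two of the three constraints are strict, and in each of the resulting subcases (both of $\{a,c;b\},\{b,c;a\}$ strict; or exactly one of them an equality) a short linear combination of the strict constraints cancels the $x_{a,c}$ and $x_{b,c}$ terms and yields $x_{a,b}\ge 1/2$. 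For a non-edge pair $(a,b)$, iterated application of the colinear equality constraint expresses $x_{a,b}$ as the sum of $x_{u,v}$ over the edges $uv$ of $P_{a,b}$, so $x_{a,b}\ge 1/2$ as well.

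With positivity in hand, $\rho(a,b):=x_{a,b}$ is a bona fide metric on $V$ with $T(\rho)=T(\mathcal{P})$, and \Cref{thm:triple_characterization}(1) yields that $\mathcal{P}$ is strictly metric. The only real work is the case analysis above; everything else is unpacking definitions and rescaling.
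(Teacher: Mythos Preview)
Your proof is correct, and it differs from the paper's in the implication $(3)\Rightarrow(1)$. The paper also sums the two constraints for $\{a,c;b\}$ and $\{b,c;a\}$ to get $2x_{a,b}\ge 0$, but stops there: it concludes only that $x$ is a \emph{pseudometric} with $T(x)=T(\mathcal{P})$, establishing $(3)\Rightarrow(2)$. For $(2)\Rightarrow(1)$ it then reruns the weight construction $w(ab)=\rho(a,b)$ (or $\infty$) from the proof of \Cref{thm:triple_characterization} and adds a small positive perturbation to every edge to force strict positivity, arguing that the strict inequalities survive the perturbation.

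Your route is sharper: by observing that consistency and simplicity forbid both $b\in P_{a,c}$ and $a\in P_{b,c}$, you see that at most one of the two summands in $2x_{a,b}=(x_{a,b}+x_{b,c}-x_{a,c})+(x_{a,b}+x_{a,c}-x_{b,c})$ can vanish, so $x_{a,b}\ge\frac12$ outright. This lets you invoke \Cref{thm:triple_characterization}(1) as a black box and skip the perturbation step entirely. The paper's argument is a one-liner for nonnegativity but pays for it with the perturbation; yours spends a short case analysis to get genuine positivity and then finishes cleanly. (Incidentally, your simplicity argument does not actually need $P_{a,b}=ab$: if $b\in P_{a,c}$ then $P_{a,c}=P_{a,b}P_{b,c}$ regardless, and $a\in P_{b,c}$ would still force $a$ to recur, so the bound $x_{a,b}\ge\frac12$ holds for all pairs directly, making the separate treatment of non-edges unnecessary.)
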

	\begin{proof}
    That (1) and (2) are equivalent is simply a restatement of \cref{thm:triple_characterization}. Moreover, it is clear that (2) implies (3). 
To see that (3) implies (2), we first observe that every feasible solution to the above system of inequalities is non-negative and hence defines a pseudometric $\rho$ with $T(\mathcal{P})=T(\rho)$. Indeed, 
	\[2x_{a,b}=(x_{a,b}+x_{b,c}-x_{a,c})+(x_{b,a}+x_{a,c}-x_{b,c}) \ge 0.\]
We argue that $\rho$ is actually a metric, i.e. $\rho(u,v)>0$ for all $u\neq v$. Suppose that this is not the case and $\rho(u,v)=0$ for some pair $u$ and $v$. Then for any $z\neq u,v$
\[\rho(z,u) \le \rho(z,v) + \rho(v,u) =\rho(z,v) \hspace{8mm} \text{ and } \hspace{8mm} \rho(z,v) \le \rho(z,u) + \rho(u,v) =\rho(z,u).\]
It follows that the above inequalities are in fact equalities, and 
$\{z,u;v\},\{z,v;u\} \in  T(\rho) = T(\mathcal{P})$. 
Consequently, both $v\in P_{z,u}$ and $u\in P_{z,v}$, contrary to the 
assumed consistency.
	\end{proof}

The (semi)metric cone on $n$ points $\text{MET}_n$ comprises all pseudometrics, i.e. 
\[\text{MET}_n = \{x\in \mathbb{R}^{\binom{n}{2}} : x_{a,b}+x_{b,c}-x_{a,c} \ge 0, \  \forall a,b,c\in [n]\}.\]
\begin{equation}\label{relation:equiv}
\text{The condition~} T(\rho) = T(\tilde{\rho}) \text{~defines an equivalence relation~} \rho\sim\tilde{\rho} \text{~on this cone}.
\end{equation}
It is not hard to see that $\sim$-equivalence classes are in a one-to-one correspondence with the faces of $\text{MET}_n$. In particular, \cref{cor:semi_met_strict} shows how to injectively map any strictly metric path system to a face of the metric cone.

We may further ask for what subsets $S\subset \binom{V}{3}^\ast$ there exists some $\rho\in \text{MET}_n$ with $T(\rho)=S$.  
	For $t=\{a,b;c\} \in \binom{V}{3}^\ast$ let us define $\Delta_t \in \mathbb{R}^{\binom{V}{2}}$ as: 
	\[\Delta_t \coloneqq \Delta_{a,b;c} \coloneqq e_{a,c}+e_{c,b}-e_{a,b}.\] 
	Here, $e_{a,b}$ denotes the standard unit vector of the coordinate $\{a,b\}$.
The following characterization, based on LP duality, is used in the sequel:

\begin{lemma}\label[lemma]{lem:semi_met_char}
Given $S\subset \binom{V}{3}^\ast$, no pseudometric $\rho$ satisfies $S=T(\rho)$ if and only 
if there exist $\alpha_{t} \ge 0$, $t \in \binom{V}{3}^\ast$, such that
\[\sum_{s \in S}\Delta_{s} = \sum_{t \in \binom{V}{3}^\ast} \alpha_t\Delta_{t},\]
and $\text{supp}(\alpha)\not\subseteq S$.
\end{lemma}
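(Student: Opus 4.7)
The plan is to recognize the statement as a Farkas-type duality and deduce it from a linear feasibility reformulation of ``there is a pseudometric $\rho$ with $T(\rho)=S$.'' The easy ($\Leftarrow$) direction is a one-line inner-product argument: assuming such $\alpha_t\ge 0$ exist and that some pseudometric $\rho$ satisfies $T(\rho)=S$, I pair the identity $\sum_{s\in S}\Delta_s=\sum_t\alpha_t\Delta_t$ with $\rho$. The left side equals $0$ because every $s\in S$ is $\rho$-colinear, while the right side equals $\sum_{t\notin S}\alpha_t(\Delta_t\cdot\rho)\ge \alpha_{t_0}(\Delta_{t_0}\cdot\rho)>0$ for any $t_0\in\mathrm{supp}(\alpha)\setminus S$, since $t_0\notin T(\rho)$ forces a strict triangle inequality. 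This contradiction rules out such a $\rho$.

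For the harder ($\Rightarrow$) direction I would first recast existence of a pseudometric with $T(\rho)=S$ as feasibility of the finite linear system
\[\Delta_s\cdot\rho = 0 \text{ for all } s\in S, \qquad \Delta_t\cdot\rho \ge 1 \text{ for all } t\notin S.\]
Any solution of this system is automatically a pseudometric with $T(\rho)=S$: all triangle inequalities hold (with equality on $S$, strictly off $S$), and entrywise non-negativity of $\rho$ follows by summing opposing triangle inequalities, exactly as in the proof of \cref{cor:semi_met_strict}. Conversely, a pseudometric with $T(\rho)=S$ satisfies $\Delta_t\cdot\rho>0$ for every $t\notin S$, and rescaling by a sufficiently large positive constant turns these strict inequalities into $\ge 1$ versions.

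Assuming infeasibility, Farkas's lemma supplies $\beta_s\in\mathbb{R}$ for $s\in S$ and $\mu_t\ge 0$ for $t\notin S$, with $\sum_{t\notin S}\mu_t>0$, such that $\sum_{s\in S}\beta_s\Delta_s=\sum_{t\notin S}\mu_t\Delta_t$. The main (and only) obstacle is converting this raw certificate into the restricted form of the lemma, in which the left-hand combination is the specific vector $\sum_{s\in S}\Delta_s$ and every coefficient on the right is nonnegative. I would pick any $k>\max_{s\in S}\beta_s$ and subtract $\tfrac{1}{k}$ of the Farkas identity from $\sum_{s\in S}\Delta_s$, obtaining
\[\sum_{s\in S}\Delta_s = \sum_{s\in S}\Bigl(1-\tfrac{\beta_s}{k}\Bigr)\Delta_s + \sum_{t\notin S}\tfrac{\mu_t}{k}\Delta_t.\]
Setting $\alpha_s=1-\beta_s/k$ and $\alpha_t=\mu_t/k$ yields the desired nonnegative decomposition, with $\mathrm{supp}(\alpha)\not\subseteq S$ because $\sum_{t\notin S}\mu_t>0$ guarantees some $\alpha_{t_0}>0$ for $t_0\notin S$.
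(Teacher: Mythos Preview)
Your proof is correct and follows essentially the same approach as the paper: an inner-product argument for the easy direction, and LP duality followed by a rescaling trick for the harder direction. The only cosmetic difference is that the paper works with the strict system $\langle\Delta_t,x\rangle>0$ for $t\notin S$ and cites a Gordan-type alternative, whereas you normalize to $\langle\Delta_t,x\rangle\ge 1$ and apply standard Farkas; your subsequent division by $k>\max_s\beta_s$ is exactly the paper's ``rescale so that all $\beta_s\le 1$'' step in slightly different clothing.
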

\begin{proof}
Note that $\langle \Delta_t, \rho\rangle\geq 0$ for any pseudometric $\rho$ and any 
$t\in \binom{V}{3}^{\ast}$, with equality iff $t\in T(\rho)$.\\
We show first that the existence of such $\alpha$ implies 
that $S$ is not of the form $T(\rho)$.
Our assumption is that $\sum_{s \in S}\Delta_{s} = \sum_{t} \alpha_t\Delta_{t}$ where 
$\alpha_t \ge 0$ and $\text{supp}(\alpha)\not\subseteq S$. Now let 
$\rho\in \mathbb{R}^{\binom{V}{2}}$ be any pseudometric with $S\subseteq T(\rho)$, 
and let us show that $S\neq T(\rho)$, i.e., the inclusion is proper. Indeed,  
\[0 = \sum_{s \in S}\langle\Delta_{s},\rho\rangle =\left\langle\sum_{s \in S}\Delta_{s} ,\rho \right\rangle= \left\langle\sum_{t \in \binom{V}{3}^\ast} \alpha_t\Delta_{t} ,\rho \right\rangle=\sum_{t \in \binom{V}{3}^\ast} \alpha_t\langle\Delta_{t},\rho\rangle.\]
In particular, if $\alpha_t > 0$ then $\langle\Delta_{t},\rho\rangle=0$ and $t\in T(\rho)$. But by assumption, $\text{supp}(\alpha)\not\subseteq S$, hence $S\subsetneq T(\rho)$.\\
In the reverse direction, let us consider a collection $S\subset \binom{V}{3}^\ast$ that is not realizable as the set of colinear triples of a pseudometric. This means that no $x\in \mathbb{R}^{\binom{V}{2}}$ satisfies the following system of inequalities:
		 \begin{equation*}
			\begin{split}
				\langle \Delta_s,x\rangle =0 & \hspace{10mm}\forall s\in S \\
				\langle \Delta_t,x\rangle >0 & \hspace{10mm}\forall t\in S^c \\
			\end{split}
		 \end{equation*}
By LP-duality, 
(e.g., \cite{SW}), 
some linear combination of the above inequalities yields $0<0$. In other words, there exist
real $\{\beta_s |s\in S\}$, and $\{\lambda_t\ge 0|t\in S^c\}$, not all zero, so that 
		 \[\sum_{s\in S} \beta_s \Delta_s - \sum_{t\in S^c} \lambda_t \Delta_t=0.\]
By rescaling, if necessary, we can assume that all
$\beta_s\le 1$, and rewrite,
\[\sum_{s\in S}\Delta_s = \sum_{s\in S} \beta_s \Delta_s + \sum_{s\in S} (1-\beta_s)\Delta_s = \sum_{t\in S^c} \lambda_t \Delta_t + \sum_{s\in S} (1-\beta_s)\Delta_s =  \sum_{t \in \binom{V}{3}^\ast} \alpha_t\Delta_{t},\]
where $\alpha_t = \lambda_t$, $t\notin S$ and $\alpha_t = 1-\beta_t$ for $t\in S$ are all non-negative and at least one $\alpha_t$, $t\notin S$, is non-zero.
	\end{proof}
\subsection{Solution Need Not Be Integral}

It is also possible to view a pointed triple $\{a,b;c\}$ as an oriented triangle, 
where edges $\{a,c\}, \{c,b\}$ are positively oriented and $\{a,b\}$ is negatively oriented.
The corresponding notion of $\{a,b;c\}$'s {\em boundary} is: 
\[\partial(\{a,b;c\}) \coloneqq e_{a,c}+e_{c,b}-e_{a,b} = \Delta_{a,b;c}.\]
We can interpret \cref{lem:semi_met_char} as saying that a subset 
$S\subseteq \binom{[n]}{3}^\ast$ cannot be the collection of colinear
triples of some pseudometric if and only if 
there exists another (fractional) set of pointed triples whose boundary coincides with the 
boundary of $S$. This formulation is very similar to Bodwin's \cite{Bo} perspective, where
strictly metric path systems are presented in terms of multi-commodity 
network flows. 

Famously, network flows satisfy an integrality condition, but this is
no longer the case when we study multi-commodity 
network flows. This prompts us to ask 
whether there always exist an {\em integral} witness for non-metrizability. 
Namely, if the set $S\subseteq \binom{[n]}{3}^\ast$ is not the triple set of any pseudometric. Does there exist a multi-set $T\subseteq \binom{[n]}{3}^\ast$, with $T\setminus S \neq \emptyset$ and $\sum_{s\in S}\Delta_s = \sum_{t\in T}\Delta_t $? The answer to this question is no. The following set 
	\[S = \{\{0, 1; 7\},\{0, 3; 7\}, \{0, 4; 6\}, \{1, 3; 6\} , \{2, 4; 7\}, \{2, 5; 6\}, \{4, 5; 7\}\}\]
	is not the triple system of any metric since
	\begin{equation*}
		\begin{split}
			\sum_{s\in S}\Delta_s &= \frac{1}{8}\Delta_{0,1;6} + \frac{7}{8}\Delta_{0,1;7}+ \frac{1}{8}\Delta_{0,3;6}+ \frac{7}{8}\Delta_{0,3;7}\\ 
			&+ \frac{3}{4}\Delta_{0,4;6}
			+ \frac{1}{4}\Delta_{0,4;7}+ \frac{7}{8}\Delta_{1,3;6}+ \frac{1}{8}\Delta_{1,3;7}\\
			&+ \frac{1}{8}\Delta_{2,4;6}
			+ \frac{7}{8}\Delta_{2,4;7}
			+ \frac{7}{8}\Delta_{2,5;6}+ \frac{1}{8}\Delta_{2,5;7}\\
			&+ \frac{1}{8}\Delta_{4,5;6}+ \frac{7}{8}\Delta_{4,5;7}.
		\end{split}
	\end{equation*}
	On the other hand, there exists no integer witness of this fact. This can be verified by using any ILP solver, e.g., \cite{gurobi}.

\section{Strictly Metric Path Systems}
We turn to estimate the number of strictly metric path systems on $n$ vertices. While the number of consistent (as well as metric) path systems on $n$ vertices is $n^{\frac{n^2}{2}(1-o(1))}$, the number of strictly metric systems turns out to be considerably smaller. 
We prove
\begin{theorem}\label[theorem]{thm:strict_count}
The set $\mathscr{S}_n$ of all strictly metric path systems on $n$ vertices satisfies
\[2^{0.51n^2(1-o(1))}\le|\mathscr{S}_n| \le 2^{1.38n^2(1-o(1))}.\] 
\end{theorem}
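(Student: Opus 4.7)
For the upper bound, the plan is to use \cref{cor:semi_met_strict} to map $\mathscr{S}_n$ injectively into the face lattice of the (semi)metric cone $\text{MET}_n\subset\mathbb{R}^{\binom{n}{2}}$. By that corollary combined with the equivalence (\ref{relation:equiv}), each strictly metric path system $\mathcal{P}$ is determined by the triple set $T(\mathcal{P})=T(\rho)$ of any realizing pseudometric $\rho$, and distinct triple sets correspond to distinct faces of $\text{MET}_n$. Thus it suffices to bound the number of faces. I would do this by a sign-pattern count for the $3\binom{n}{3}$ linear forms $\langle\Delta_t,\cdot\rangle$ in ambient dimension $\binom{n}{2}\sim n^2/2$. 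A bare Milnor--Thom estimate is too weak (it only gives $n^{\Theta(n^2)}$), so a more careful analysis exploiting linear dependencies among the $\Delta_t$'s, or an argument inspired by bounds on maximum-VC-classes (into which strict path systems can be injected), should yield the target bound $2^{1.38 n^2 (1-o(1))}$.

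For the lower bound, the plan is to follow the Graham--Yao--Yao paradigm and construct a rich family of strictly metric path systems of \emph{monotone} type. Fix the linear order $1<2<\cdots<n$. A monotone path system is one in which every path $P_{i,j}$ with $i<j$ visits its vertices in increasing order. Such a system can be forced by edge weights of the form $w(ij)=f(|i-j|)+\varepsilon_{ij}$ with $f$ strictly convex (guaranteeing monotonicity of geodesics) and the $\varepsilon_{ij}$'s small perturbations that encode the combinatorial branching. Using \cref{thm:triple_characterization}, each admissible perturbation profile yields a distinct triple set $T(\mathcal{P})$ and hence a distinct strictly metric system. A careful counting --- parametrizing, for each pair $(i,j)$, the local choice of intermediate vertices along $P_{i,j}$ --- should produce at least $2^{0.51 n^2(1-o(1))}$ such systems.

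The main obstacle I expect lies on the lower bound side: extracting the slightly-more-than-$n^2/2$ independent bits required for the constant $0.51$. A pure ``choose the next vertex on each path'' approach only supplies roughly $\binom{n}{2}$ free binary bits, giving $2^{(1/2-o(1))n^2}$; surpassing this requires a more delicate construction --- possibly nested or hierarchical, partitioning $[n]$ into blocks and independently picking rich monotone strictly-metric systems within each --- so that degrees of freedom compound. The upper bound hurdle is purely combinatorial-geometric: upgrading the naive arrangement bound on the face count of $\text{MET}_n$ to the target $2^{O(n^2)}$ form with the specific constant $1.38$ requires a sharper tool, likely either a polytopal face estimate tailored to $\text{MET}_n$ or the reduction to maximum-VC-class counting alluded to in the introduction.
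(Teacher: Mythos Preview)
Your proposal has substantive gaps on both sides.

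\textbf{Upper bound.} Mapping $\mathscr{S}_n$ into the face lattice of $\text{MET}_n$ is correct in principle, but it cannot yield the constant $1.38$. The paper in fact proves (Theorem~\ref{thm:face_bound}) that $\text{MET}_n$ has at most $2^{2.38n^2(1+o(1))}$ faces, and this is the best face bound obtained there; routing through faces therefore gives only $|\mathscr{S}_n|\le 2^{2.38n^2(1+o(1))}$. The paper's actual argument for $1.38$ bypasses the face count and works directly with \emph{r\'esum\'es}. Each strictly metric $\mathcal{P}$ has a r\'esum\'e $f:\binom{[n]}{2}\to[n]$ whose \emph{signature} $\sigma(f)=\sum_{\{i,j\}\in\mathrm{dom}(f)}\Delta_{i,j;f(i,j)}$ distinguishes it from every non-r\'esum\'e (Lemma~\ref{lem:sig_unique}, proved via Lemma~\ref{lem:semi_met_char}). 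The point is that $\sigma(f)+\mathbf{1}\in\mathbb{Z}_{\ge 0}^{\binom{n}{2}}$ has coordinate sum at most $2\binom{n}{2}$, so the number of signatures is at most $\binom{3\binom{n}{2}}{\binom{n}{2}}\le 2^{\frac{3}{2}H(1/3)n^2(1+o(1))}\approx 2^{1.38n^2}$. Neither Milnor--Thom nor the VC-class connection (which in the paper runs in the opposite direction, using path systems to bound VC classes) delivers this.

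\textbf{Lower bound.} Your monotone-path idea is a genuine construction in the paper, but it is used for a different purpose (Theorem~\ref{thm:monotone}, Proposition~\ref{prop:monotone_count}) and only yields about $2^{0.15n^2}$ systems on a \emph{fixed} graph. You correctly diagnose the obstacle---getting past $2^{n^2/2}$---but your proposed remedy (hierarchical blocks) does not work: independent systems on blocks of size $n/k$ contribute only $k\cdot\Theta((n/k)^2)=\Theta(n^2/k)$ bits, which is worse, not better. The paper's device is entirely different: for $G\sim G(n,p)$ with $p\approx 1/2$, one shows (Lemma~\ref{lem:gnp_systems}) that a.a.s.\ $G$ admits $2^{\Theta(n^2)}$ \emph{neighborly} strictly metric systems, built by fixing a perfect matching, making its edges cheap, and for each admissible non-edge choosing between two $2$-hop paths through the matching. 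Since a neighborly system determines its underlying graph, summing over graphs gives $|\mathscr{S}_n|\ge 2^{\binom{n}{2}}\cdot 2^{(1-p)p^2 n^2/8}$; optimizing $p$ near $1/2$ yields the constant just above $0.51$. The ``extra'' bits come not from compounding within a single construction but from the product of graph choices and per-graph choices.
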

A lower bound of $2^{\frac{n^2}{2}(1-o(1))}$ is easy to
prove, since every connected graph $G=(V,E)$
induces at least one strictly metric path system, by assigning
a weight $w(e):=1+\varepsilon_e$
to every edge $e\in E$, where $\varepsilon_e$ is taken to be
a small generic value. 

To improve this lower bound, we consider path systems in $G(n,p)$ random graphs.
While \cref{thm:strict_count} provides only a very sligt improvement
over the trivial lower bound,
its proof shows that almost all graphs admit 
$2^{\Omega(n^2)}$ strictly metric {\em neighborly} path systems, see below.
We denote by $\mathscr{NS}(G)$
the family of neighborly strictly metric
path systems in the graph $G$. The reader may prefer to read first
\cref{thm:bip_strict} and its proof that are similar but easier.
\begin{lemma}\label[lemma]{lem:gnp_systems}
For fixed $0<p<1$ and $G\sim G(n,p)$  
there holds a.a.s.\ 
\[|\mathscr{NS}(G)|\ge 2^{(1-p)p^2\frac{n^2}{8}(1+o(1))}.\]
\end{lemma}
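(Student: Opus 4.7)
The plan is to count elements of $\mathscr{NS}(G)$ arising as distinct chambers of a hyperplane arrangement in edge-weight space, following the strategy of \cref{thm:bip_strict}. First I would invoke the a.a.s.\ properties of $G\sim G(n,p)$ for fixed $p$: diameter $2$, edge count $(1+o(1))p\binom{n}{2}$, and uniform common-neighbor concentration $|N(u)\cap N(v)|=(1+o(1))p^2 n$ for every pair $u,v$. For a generic small $\varepsilon\in\mathbb{R}^{|E(G)|}$, the weights $w(e)=1+\varepsilon_e$ induce a neighborly strictly metric path system in which every non-edge $\{u,v\}$ is routed through the common neighbor $w$ that uniquely minimizes $\varepsilon_{uw}+\varepsilon_{wv}$. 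Hence $|\mathscr{NS}(G)|$ is bounded below by the number of chambers of the central arrangement with hyperplanes $\varepsilon_{uw}+\varepsilon_{wv}=\varepsilon_{uw'}+\varepsilon_{w'v}$, one for each non-edge $\{u,v\}$ and each pair $w\neq w'\in N(u)\cap N(v)$.

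The core step is to exhibit $m=(1-p)p^2 n^2/8 \cdot (1-o(1))$ independent binary toggles. A \emph{toggle gadget} is a quadruple $(u,v,w^0,w^1)$ in which $\{u,v\}$ is a non-edge of $G$ and $w^0,w^1$ are distinct common neighbors, attached to the four distinguished edges $uw^0,uw^1,vw^0,vw^1$; flipping the toggle swaps which of $w^0,w^1$ wins the $\{u,v\}$-minimization. Selecting the gadgets to be edge-disjoint with distinct associated non-edges $\{u_i,v_i\}$ makes their toggles mutually independent. A near-optimal packing argument in $G(n,p)$---e.g.\ via a Rödl-nibble or greedy selection, using the fact that the gadget hypergraph on $E(G)$ is approximately regular with codegrees of smaller order than degrees---produces $m$ such gadgets. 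For each $\sigma\in\{0,1\}^m$ I would construct the realizing perturbation $\varepsilon^\sigma$ in two layers: first a generic baseline $\eta$ with strictly positive winner-gap $\delta>0$ at every non-edge, then on each gadget's four edges a perturbation of magnitude $\ll\delta$ engineered so that the gadget's winner becomes $w_i^{\sigma_i}$. Edge-disjointness prevents direct interaction between gadget additions, and scale separation preserves every other winner; distinct $\sigma$ therefore give distinct elements of $\mathscr{NS}(G)$, so $|\mathscr{NS}(G)|\ge 2^m$.

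The main obstacle is the packing step: verifying that $G\sim G(n,p)$ a.a.s.\ admits $(1-p)p^2 n^2/8 \cdot (1-o(1))$ edge-disjoint toggle gadgets with distinct non-edges. This requires a careful computation of the gadget hypergraph's degrees and codegrees in $G(n,p)$ and a standard application of near-optimal hypergraph packing for pseudorandom / quasirandom substrate graphs. The constant $(1-p)p^2/8$ emerges from the expected per-edge gadget density together with the four-edges-per-gadget accounting. Once the packing is secured, the layered perturbation and the count of distinct systems are routine, and taking logarithms yields the claimed bound.
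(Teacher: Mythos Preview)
Your proposal has a genuine gap in the perturbation step. You write: ``first a generic baseline $\eta$ with strictly positive winner-gap $\delta>0$ at every non-edge, then on each gadget's four edges a perturbation of magnitude $\ll\delta$ engineered so that the gadget's winner becomes $w_i^{\sigma_i}$.'' But if every winner-gap is at least $\delta$, a perturbation of magnitude $\ll\delta$ cannot change \emph{any} winner---including the one at $\{u_i,v_i\}$---so the toggle is inert. If instead you try to make the four gadget edges substantially cheaper so that $w_i^0,w_i^1$ beat all other common neighbours of $u_i,v_i$, edge-disjointness of the gadgets no longer suffices: for a common neighbour $w'\notin\{w_i^0,w_i^1\}$, both edges $u_iw'$ and $v_iw'$ may well be gadget edges of two \emph{other} gadgets, making the competitor path $u_iw'v_i$ just as cheap as your intended one. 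Nothing in your packing conditions excludes this. Your accounting for the constant $(1-p)p^2/8$ is also off: a near-perfect edge-disjoint packing of four-edge gadgets on $\sim p\binom{n}{2}$ edges yields $\sim pn^2/8$ gadgets (or $\min(p/8,(1-p)/2)\,n^2$ once the distinct-non-edge constraint is imposed), not $(1-p)p^2 n^2/8$; the phrase ``expected per-edge gadget density'' does not produce that number.

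The paper sidesteps all of this with a single structural device: fix a perfect matching $M=\{x_iy_i:1\le i\le n/2\}$ and make $M$-edges the cheapest. Because $M$ is a matching, every $2$-path uses at most one $M$-edge, which is exactly what blocks the cross-gadget interference above. A pair $x_i,x_j$ is declared \emph{admissible} when $x_ix_j\notin E$ but $x_iy_j,x_jy_i\in E$; over the $\binom{n/2}{2}$ pairs these events are mutually independent, each with probability $(1-p)p^2$, so standard concentration gives $(1-p)p^2\frac{n^2}{8}(1+o(1))$ admissible pairs---this is where the constant actually comes from. For each admissible pair the two candidate paths $x_iy_ix_j$ and $x_iy_jx_j$ each use one $M$-edge and one non-$M$ edge, and every non-$M$ edge so used lies in at most one chosen path; a three-level weight scheme ($M$-edges, then chosen non-$M$ edges, then the rest) plus tiny generic noise realizes any of the $2^m$ choices. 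No hypergraph packing or R\"odl nibble is needed.
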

\begin{proof}
For fixed $p$, a $G(n,p)$ graph
has a.a.s.\ a perfect matching $M$.
Let $M$'s edges be called $x_1y_1,\dots, x_ky_k$,
where $k = \lfloor \frac{n}{2}\rfloor$.
We make the edges in $M$ `cheap'
and construct a corresponding family of 
neighborly path systems in $G$, where
shortest paths naturally tend to 
utilize $M$-edges. \\
For $1\le i < j \le k$, we call 
$x_i, x_j$ an {\em admissible pair of vertices} if:
	\begin{enumerate}
		\item $x_i$ and $x_j$ are not neighbors in $G$ 
		\item Both $x_iy_j$ and $x_jy_i$ are edges in $G$ 
	\end{enumerate}
The probability that the vertices
$x_i, x_j$ form an admissible pair is 
$(1-p)p^2$. Also, the events 
"$x_i, x_j$ form an admissible pair" 
are independent over all pairs 
$1\le i<j\le k$. 
By a standard concentration argument, 
the number of admissible pairs is 
a.a.s.\ 
$(1-p)p^2\frac{n^2}{8}(1+o(1))$. 
For each admissible pair $x_i, x_j$ 
there are precisely two 
$2$-step $(x_ix_j)$-paths which use an $M$-edge, namely
$x_iy_ix_j$ and $x_iy_jx_j$.
Clearly, there are 
$2^{(1-p)p^2\frac{n^2}{8}(1+o(1))}$
partial path systems where every
admissible pair is connected by
one of these two paths. We now argue that
these systems are strictly metric. Fix one such partial system $\tilde{\mathcal{P}}$, and
let $X_{u,v}$ be i.i.d.\ random variables over $G$'s edges $uv$. Every such a random
variable is uniformly distributed on $[0,\epsilon]$, with $\epsilon>0$ some small constant. 
The value of $X_{u,v}$ gets added to the weight of this edge $uv$. This
random noise ensures that every shortest path is unique. Concretely, here is the
weight function $w:E\to \mathbb{R}_{>0}$ that we use:
	\[w(uv) \coloneqq \begin{cases}
		1 + X_{u,v}& uv\in M \\
		1.1+ X_{u,v} & uv \notin M \text{ and } uvz\in \tilde{\mathcal{P}} \text{ for some } z\\
		1.2 + X_{u,v} & \text{Otherwise}
	\end{cases}\]
In words, we assign edge weights
as follows: $M$-edges are the cheapest. 
Next come edges not in $M$ that
are in one of the chosen paths in 
$\tilde{\mathcal{P}}$.
All other edges are more expensive.
By construction, an edge not in $M$ 
appears in at most one path in $\tilde{\mathcal{P}}$. It is not difficult to see that every path in $\tilde{\mathcal{P}}$ is the unique 
$w$-shortest path between its ends. 
Moreover, the random 
noise makes all the shortest paths unique. Therefore, $w$ strictly induces a path system $\mathcal{P}$ containing all the paths in $\tilde{\mathcal{P}}$.
\end{proof}

To prove the upper bound in \cref{thm:strict_count} we use the insight gained in 
\cref{cor:semi_met_strict}. In particular, we use \cref{lem:semi_met_char} to bound the number of $\sim$-equivalence classes, see \cref{relation:equiv}, of metrics which correspond to strictly metric systems. Note that, in particular, an upper bound on the number of faces of $\text{MET}_n$ yields such a bound.
% To prove the upper bound
% in \cref{thm:strict_count} we use
% \cref{lem:semi_met_char}. 
% With the insight gained in 
% \cref{cor:semi_met_strict}, we will 
% derive an upper bound on $|\mathscr{S}_n|$
% by bounding the number of faces  of 
% $\text{MET}_n$. 
Graham, Yao and Yao 
\cite{GYY}
showed that this cone has at most 
$2^{2.72n^2}$ faces.
Inspired by their ideas, 
we improve their bounds.
Recall that $\Delta_{i,j;k} = e_{i,k} + e_{k,j} - e_{i,j}$ is a vector in $\mathbb{R}^{\binom{[n]}{2}}$. For all partial functions
$f:\binom{[n]}{2}\to [n]$ we always assume that $f(a,b)\notin \{a,b\}$
whenever $\{a,b\} \in \text{dom} f$.
We introduce the following definition:
\begin{definition}
The signature $\sigma(f)\in \mathbb{Z}^{\binom{[n]}{2}}$
of a partial function $f:\binom{[n]}{2}\to [n]$ is the vector defined by
\[\sigma(f)= \sum_{\{i,j\} \in \text{dom}(f) } \Delta_{i,j; f(i,j)}.\]
\end{definition}
The following lemma shows that signatures
separate résumés of strictly metric
path system from non-résumés:
\begin{lemma}\label[lemma]{lem:sig_unique}
Let $\mathcal{P}$ be a strictly metric path system and let $f,g:\binom{[n]}{2} \to [n]$, where
$f\in \mathcal{R}(\mathcal{P})$ and $g\notin \mathcal{R}(\mathcal{P})$.
Then $\sigma(f)\neq \sigma(g)$.
\end{lemma}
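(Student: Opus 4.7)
Since $\mathcal{P}$ is strictly metric, \cref{thm:triple_characterization} supplies a metric $\rho$ on $V$ with $T(\rho)=T(\mathcal{P})$. The plan is to distinguish $\sigma(f)$ from $\sigma(g)$ by evaluating two linear functionals on them: inner product with $\rho$, and inner product with the all-ones vector $\mathbf{1}\in\mathbb{R}^{\binom{[n]}{2}}$. The key identity is
\[\langle\Delta_{i,j;k},\rho\rangle=\rho(i,k)+\rho(k,j)-\rho(i,j)\geq 0,\]
with equality if and only if $\{i,j;k\}\in T(\rho)$, equivalently $k\in V(P_{i,j})\setminus\{i,j\}$; here I use that strict metricity makes $P_{i,j}$ the unique $\rho$-geodesic, so any off-path vertex yields a strict triangle inequality.

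First I pair with $\rho$. Every summand of $\sigma(f)=\sum_{\{i,j\}\in\text{dom}(f)}\Delta_{i,j;f(i,j)}$ annihilates $\rho$ because $f(i,j)\in V(P_{i,j})\setminus\{i,j\}$, so $\langle\sigma(f),\rho\rangle=0$, while each summand of $\sigma(g)$ contributes non-negatively. If some $\{i,j\}\in\text{dom}(g)$ satisfies $g(i,j)\notin V(P_{i,j})\setminus\{i,j\}$ — which also covers the failure mode $\{i,j\}\in\text{dom}(g)\setminus\text{dom}(f)$, since there $V(P_{i,j})=\{i,j\}$ and by convention $g(i,j)\neq i,j$ — then $\langle\sigma(g),\rho\rangle>0$, and I conclude $\sigma(f)\neq\sigma(g)$.

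The remaining case is $\langle\sigma(g),\rho\rangle=0$, which is the only non-routine step: $\rho$ alone cannot distinguish a résumé from a proper sub-résumé whose values are all genuinely colinear. In this case every summand of $\sigma(g)$ must vanish against $\rho$, forcing $g(i,j)\in V(P_{i,j})\setminus\{i,j\}$ for every $\{i,j\}\in\text{dom}(g)$, and in particular $\text{dom}(g)\subseteq\text{dom}(f)$. To resolve the obstacle I invoke the second functional: since $\langle\Delta_{i,j;k},\mathbf{1}\rangle=1$ for every pointed triple, $\langle\sigma(f),\mathbf{1}\rangle=|\text{dom}(f)|$ and $\langle\sigma(g),\mathbf{1}\rangle=|\text{dom}(g)|$. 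If $\sigma(f)=\sigma(g)$ these cardinalities must agree, forcing $\text{dom}(g)=\text{dom}(f)$; combined with the pointwise colinearity just established, $g$ then satisfies both résumé axioms, so $g\in\mathcal{R}(\mathcal{P})$, contradicting the hypothesis.
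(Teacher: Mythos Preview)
Your proof is correct. Both arguments share the use of $\langle\,\cdot\,,\mathbf{1}\rangle$ to pin down $|\text{dom}(\cdot)|$ and the identification of a ``bad'' pair $\{a,b\}$ with $g(a,b)\notin P_{a,b}$, but the contradiction is reached differently. The paper, assuming $\sigma(f)=\sigma(g)$, rewrites $\sum_{t\in T(\mathcal{P})}\Delta_t=\sigma(g)+\sum_{t\in S}\Delta_t$ as a non-negative combination whose support escapes $T(\mathcal{P})$ and then invokes \cref{lem:semi_met_char} (the LP-duality characterization) to conclude that $T(\mathcal{P})$ is not $T(\rho)$ for any pseudometric, contradicting strict metrizability. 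You instead pull the metric $\rho$ from \cref{thm:triple_characterization} at the outset and use it directly as a separating functional: $\langle\sigma(f),\rho\rangle=0$ while $\langle\sigma(g),\rho\rangle>0$ whenever a bad pair exists, and if none exists you fall back on $\mathbf{1}$ to force $g\in\mathcal{R}(\mathcal{P})$. Your route is a bit more elementary, bypassing \cref{lem:semi_met_char} entirely; the paper's route, on the other hand, situates the lemma within the same LP-duality framework used for the face-count bounds.
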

\begin{proof}
The domain size of a function can be 
easily recovered from its signature:
\[|\text{dom}(f)| = \sum_{\{i,j\} \in \text{dom}(f) } \langle \Delta_{i,j; f(i,j)},\mathbf{1}\rangle =\langle\sigma(f),\mathbf{1}\rangle,\] 
where $\mathbf{1}$ is the all-ones 
vector. Arguing by contradiction, we
suppose that $f$ and $g$ are as in the
theorem and yet $\sigma(f)=\sigma(g)$.
As we saw, this implies that 
$|\text{dom}(g)| = |\text{dom}(f)|$.

We find next $\{a,b\}\in \text{dom}(g)$ 
such that 
$\{a,b;g(a,b)\} \notin T(\mathcal{P})$.
\begin{itemize}
\item
If $\text{dom}(g) \neq \text{dom}(f)$ then we can take any 
$\{a,b\} \in \text{dom}(g) \setminus \text{dom}(f)$.
\item
If $\text{dom}(g) =\text{dom}(f)$: By assumption
$g\not\in \mathcal{R}(\mathcal{P})$. Therefore,
there must be a pair 
$\{a,b\}\in \text{dom}(g)$ for which 
$g(a,b)\notin P_{a,b}$.
\end{itemize}
Let us set $S = \{\{a,b;c\} \in T(P) : f(a,b) \neq c\}$. We can write
\[\sum_{t\in T(\mathcal{P})}\Delta_t =\sigma(f) + \sum_{t\in S}\Delta_t  =\sigma(g) + \sum_{t\in S}\Delta_t = \sum_{\{u,v\}\in \text{dom}(g)} \Delta_{u,v;g(u,v)}+ \sum_{t\in S}\Delta_t.\]
Since $\{u,v;g(u,v)\}\notin T(\mathcal{P})$ for some $u,v$, by \cref{lem:semi_met_char}, $\mathcal{P}$ is not strictly metric.

	% Denote $E(f)=\{\{u,v\}\in \binom{[n]}{2}: f(u,v)\in \{u,v\}\}$. Observe that, 
	% \[\|\sigma(f)\|_1 = 2\binom{n}{2} - |E(f)|.\]
	% Since $\|\sigma(f)\|_1=\|\sigma(g)\|_1$ we have $|E(f)|=|E(g)|$. Moreover, since $g\notin \mathcal{R}(\mathcal{P})$ and $|E(f)|=|E(g)|$ there exists at least one pair $a,b\in [n]$ such that $g(a,b)\notin P_{a,b}$. \\
	% Let $d$ be a metric on $[n]$ which strictly induces $\mathcal{P}$. As $f\in \mathcal{R}(\mathcal{P})$ we have that $d(u,v) = d(u,f(u,v)) + d(f(u,v),v)$ for all $u,v\in [n]$. By the triangle inequality, we have that $d(u,v) \le d(u,g(u,v)) + d(g(u,v),v)$ for all $u,v\in [n]$. Moreover, since $g(a,b)\notin P_{a,b}$ this means that $d(a,b)< d(a,g(a,b)) + d(g(a,b),b)$. Therefore,
	% \begin{equation}\label{eq:1}
	% 	\sum_{u<v} d(u,f(u,v)) + d(f(u,v),v) <  \sum_{u<v} d(u,g(u,v)) + d(g(u,v),v).
	% \end{equation}
	% On the other hand, by definition of the signature we have
	% \[\sum_{u<v} d(u,f(u,v)) + d(f(u,v),v) = \sum_{u<v} [\sigma(f)]_{u,v} d(u,v).\]
	% Similarly,   
	% \[\sum_{u<v} d(u,g(u,v)) + d(g(u,v),v) = \sum_{u<v} [\sigma(g)]_{u,v} d(u,v).\]
	% By assumption $\sigma(f)=\sigma(g)$ and therefore the right and left hand sides of \eqref{eq:1} are equal, implying $0<0$.
\end{proof}
We now prove \cref{thm:strict_count}:
\begin{proof}(\Cref{thm:strict_count})
We start with the lower bound. As
\cref{lem:gnp_systems} shows,
a.a.s.\ $|\mathscr{NS}(G)|\ge 2^{\frac{n^2}{64}(1+o(1))}$,
when $G\sim G(n,1/2)$. There is no
double counting to worry about, since a neighborly
path system uniquely defines its graph. Also, all 
$n$-vertex graphs are equally likely
in $G(n,1/2)$. Consequently,
\[|\mathscr{S}_n| =\sum_{G} |\mathscr{NS}(G)| \ge 2^{\binom{n}{2}(1+o(1))}2^{\frac{n^2}{64}(1+o(1))}=  2^{\frac{33}{64}n^2(1+o(1))}.\]
It is possible to increase the 
coefficient of $\frac{33}{64}$ by
an additive term of about
$3\cdot 10^{-4}$. To this end we
work in $G(n,p)$ for $p$ a little
above $\frac 12$. The function
to be maximized is
$p^2(1-p)/4+H(p)-1$, where $H$ is the
binary entropy function, and the maximum
is attained at $p\approx 0.51037$.\\
As \cref{lem:sig_unique} shows, we can
obtain an upper bound on the number of
strictly metric path systems, by 
bounding the total number of possible 
signatures for all partial functions 
$f:\binom{[n]}{2}\to [n]$. It is more
convenient to work with vectors of the
form $\sigma(f) +\mathbf{1}$. This is a 
vector of non-negative integers, and
following up on a previous calculation, 
we get
\[\sum_{i<j} (\sigma(f) +\mathbf{1})_{i,j} =\langle \sigma(f) +\mathbf{1}, \mathbf{1} \rangle = |\text{dom}(f)| + \binom{n}{2}.\]
We bound the number of possible 
signatures that sum to $m + \binom{n}
{2}$ for some 
$0\le m \le \binom{n}{2}$.
This, in turn, does not exceed the
number of integer solutions to the 
system 
	\[\sum_{e\in \binom{[n]}{2}} x_e = m+\binom{n}{2}, \hspace{15mm} x_e\ge 0,  \  e\in \binom{[n]}{2},\]
which equals 
\[\binom{2\binom{n}{2} + m-1}{\binom{n}{2}-1}\le \binom{3\binom{n}{2}}{\binom{n}{2}}\le 2^{H(\frac{1}{3}) \frac{3}{2}n^2(1+o(1))}\le 2^{1.38n^2(1-o(n))}.\]
	Summing over all $0\le m \le \binom{n}{2}$ we get the desired upper bound. 

\end{proof}
\subsection{Faces of the Metric Cone}
We can use similar methods to bound the number of faces of the metric cone, 
\[\text{MET}_n = \{x\in \mathbb{R}^{\binom{n}{2}} : x_{a,b}+x_{b,c}-x_{a,c} \ge 0, \  \forall a,b,c\in [n]\}.\]
An upper bound of $2^{2.72n^2}$ is mentioned in \cite{GYY} without proof. 
Here, we prove a slightly better upper bound:
\begin{theorem}\label[theorem]{thm:face_bound}
The order-$n$ metric cone, $\text{MET}_n$, has at most $2^{2.38 n^2 (1+o(1))}$ faces.
\end{theorem}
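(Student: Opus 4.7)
The plan is to adapt the upper-bound strategy of \cref{thm:strict_count} to face-counting. By the discussion preceding \cref{lem:semi_met_char}, faces of $\mathrm{MET}_n$ are in bijection with the $\sim$-equivalence classes of pseudometrics, equivalently with subsets $T\subseteq \binom{[n]}{3}^\ast$ realizable as $T=T(\rho)$ for some pseudometric $\rho$. I would assign to each face $F$ the signature
\[v_F := \sum_{t\in T(F)} \Delta_t \;\in\; \mathbb{Z}^{\binom{n}{2}},\]
and then bound the number of distinct $v_F$ that arise.

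Injectivity of $F\mapsto v_F$ is a direct analog of \cref{lem:sig_unique}: if $v_F=v_{F'}$ but $T(F)\ne T(F')$, then WLOG there is some $t^*\in T(F')\setminus T(F)$, and the identity $\sum_{t\in T(F)} \Delta_t = \sum_{s\in T(F')} \Delta_s$ exhibits the left-hand side as a non-negative integer combination of $\Delta$'s whose support is not contained in $T(F)$. By \cref{lem:semi_met_char} this contradicts the realizability of $T(F)$ as $T(\rho)$.

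A direct computation shows that for each edge $e=\{i,j\}$,
\[(v_F)_e \;=\; \bigl|\{b:\{i,b;j\}\in T(F)\}\bigr| + \bigl|\{b:\{j,b;i\}\in T(F)\}\bigr| - \bigl|\{c:\{i,j;c\}\in T(F)\}\bigr|,\]
so $(v_F)_e\ge -(n-2)$ and $\sum_e (v_F)_e = |T(F)|$. Hence $v_F+(n-2)\mathbf{1}$ is a non-negative integer vector subject to an explicit sum constraint, and the problem reduces to counting such vectors.

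The main obstacle is the final sharp counting step. Treating $v_F$ as an arbitrary non-negative integer vector with the above sum bound yields only $2^{O(n^2\log n)}$, which is far too weak. To obtain the claimed $2^{2.38n^2}$ I would decompose the signature into its positive part (parametrized by the matrix $\alpha$ with $\sum_{i,j}\alpha_{i,j}=2|T(F)|$ and each entry at most $n-2$) and its negative part ($\gamma$, with $\sum_e \gamma_e = |T(F)|$), and count each by a binomial estimate of the form $\binom{k\binom{n}{2}}{\binom{n}{2}}$ in the spirit of the $\binom{3\binom{n}{2}}{\binom{n}{2}}$ bound used in \cref{thm:strict_count}. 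The parameter $k$ would then be optimized so that the combined entropy expression, $(k/2)H(1/k)$ appropriately summed over the positive and negative pieces, matches $2.38$.
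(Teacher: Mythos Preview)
Your injectivity step is fine and is a clean alternative to the paper's route via \cref{lem:sig_and_closure}: distinct realizable $T(F)$ indeed have distinct full signatures $v_F=\sum_{t\in T(F)}\Delta_t$, by exactly the application of \cref{lem:semi_met_char} you describe.

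The genuine gap is in the counting. Your decomposition of $v_F$ into a positive part with coordinate sum $2|T(F)|$ and a negative part with coordinate sum $|T(F)|$ is correct, but $|T(F)|$ is \emph{not} $O(n^2)$: many faces have $|T(F)|=\Theta(n^3)$ (e.g.\ the line metric $\rho(i,j)=|i-j|$ has $\binom{n}{3}$ colinear triples). Consequently the binomial counts are of the form $\binom{\Theta(n^3)}{\binom{n}{2}}$, and the entropy expression you propose to ``optimize'' does not stay bounded: $\log_2\binom{kN}{N}\sim N\log_2(ek)$ grows without bound in $k$, so no choice of $k$ yields a constant in front of $n^2$. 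You correctly diagnosed that the naive bound gives $2^{O(n^2\log n)}$; the positive/negative split does not repair this.

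What the paper does differently is the one idea your plan is missing: for each face $F$ it passes from $T(F)$ to a subset $B\subseteq T(F)$ such that $\{\Delta_t:t\in B\}$ is a \emph{basis} of the linear span of $\{\Delta_t:t\in T(F)\}$. Then $|B|\le\binom{n}{2}$ automatically, and \cref{lem:sig_and_closure} (equivalently: $B$ and $T(F)$ have the same closure) guarantees that distinct faces yield distinct $\sigma(B)$. Now the positive and negative parts have coordinate sums $2|B|\le 2\binom{n}{2}$ and $|B|\le\binom{n}{2}$, and the two binomials $\binom{3\binom{n}{2}}{\binom{n}{2}}$ and $\binom{2\binom{n}{2}}{\binom{n}{2}}$ multiply to $2^{(\,\tfrac{3}{2}H(1/3)+1\,)n^2(1+o(1))}\le 2^{2.38\,n^2(1+o(1))}$. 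Your plan becomes a complete proof once you insert this basis step (and switch the injectivity argument to one that works for $\sigma(B)$ rather than $\sigma(T(F))$).
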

Before we prove this upper bound, we briefly discuss lower bounds for $\text{MET}_n$'s number 
of faces. Since every strictly metric path system uniquely corresponds to some face of the 
metric cone,
the lower bound in \cref{thm:strict_count} provides as well
a lower bound for the number of faces. This is the best lower bound that we are aware of. 
We also mention a result of Avis \cite{Av} that $\text{MET}_n$ has at least 
$2^{\binom{n}{2}(1+o(1))}$ extreme rays. His argument shows that the "standard metric" of
asymptotically almost every graph corresponds to an extreme ray of the metric cone.
By "standard metric" of a graph we mean the one where every edge is of unit length. 

Let us return to the equivalence relation \eqref{relation:equiv}
on $\text{MET}_n$, where two metrics are $\sim$-equivalent
if they have the same collection of collinear triples.
The $\sim$-equivalence classes 
are in a one-to-one correspondence with $\text{MET}_n$'s faces. In fact, each such 
equivalence class coincides with the relative interior of its associated face.
In the interior of $\text{MET}_n$ all triangle inequalities are strict, so 
that $T(\rho)=\emptyset$ for every $\rho \in \text{int}(\text{MET}_n)$. 
This equivalence class corresponds to the full-dimensional face that is the cone's interior. 
At the other extreme is the cone's vertex, namely the origin $\{0\}$, a zero-dimensional face 
where $T(\rho) = \binom{[n]}{3}^\ast$. Every other equivalence class is contained in the 
boundary $\partial(\text{MET}_n)$, and corresponds to a non-trivial face. 
We seek to bound the number of 
these equivalence classes, i.e., the cardinality of the set 
\[\mathcal{F}\coloneqq \{F\subseteq \binom{[n]}{3}^\ast: \exists \rho \in \text{MET}_n \text{ s.t. } T(\rho) = F\}.\]
To get started with our proof, we extend the notion of signature to
arbitrary subsets of $\binom{[n]}{3}^\ast$. Namely, let the signature of 
a set of pointed triples $S\subseteq \binom{[n]}{3}^\ast$ be: 
\[\sigma(S) = \sum_{t\in S} \Delta_t.\]
We also need to define $\text{cl}(S)$, the {\em closure} of a subset 
$S \subseteq \binom{[n]}{3}^\ast$. Namely, 
\[\text{cl}(S) := \bigcap_{\substack{F\in \mathcal{F}\\S\subseteq F} }F.\]

Note that $\text{cl}(S) \in \mathcal{F}$, since
$\mathcal{F}$ is intersection closed. Indeed, if $F_1 =T(\rho_1)$ and $F_2 = T(\rho_2)$, 
then it is not difficult to see that $F_1\cap F_2 = T(\rho_1+\rho_2)$. 
We remark, that much of what we say here is not specific to the metric cone and holds just
as well for any polyhedral cone. The crux of the matter seems to be that the face sets of a 
cone and its dual form anti-isomorphic lattices, see e.g. \cite{SW,Zi}. For example,
consider dual cone of $\text{MET}_n$. Its extreme rays are exactly
the rays spanned by the vectors $\Delta_{s}$, $s\in \binom{[n]}{3}^\ast$.\\
Signature and closure are related as follows:
\begin{lemma}\label[lemma]{lem:sig_and_closure}
If $\sigma(S) = \sigma(\tilde{S})$
for some $S,\tilde{S}\subseteq \binom{[n]}{3}^\ast$,  
then $\text{cl}(S) = \text{cl}(\tilde{S})$.
\end{lemma}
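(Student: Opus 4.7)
The plan is to show that the closure $\text{cl}(S)$ depends on $S$ only through the family of pseudometrics that ``vanish on $S$,'' and that this family in turn depends on $S$ only through $\sigma(S)$.

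The key observation I would use is that for any pseudometric $\rho \in \text{MET}_n$ and any $t \in \binom{[n]}{3}^\ast$, we have $\langle \Delta_t, \rho\rangle \ge 0$, with equality iff $t \in T(\rho)$ (this is noted at the beginning of the proof of \Cref{lem:semi_met_char}). Summing over $s \in S$, I get
\[
\langle \sigma(S), \rho\rangle \;=\; \sum_{s \in S} \langle \Delta_s, \rho\rangle \;\ge\; 0,
\]
with equality iff $\langle \Delta_s, \rho\rangle = 0$ for every $s \in S$, i.e.\ iff $S \subseteq T(\rho)$. So the condition ``$S \subseteq T(\rho)$'' is equivalent to the single linear condition ``$\langle \sigma(S), \rho\rangle = 0$.''

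Next I would rewrite $\text{cl}(S)$ in terms of pseudometrics rather than faces: since $\mathcal{F}$ is exactly the family $\{T(\rho) : \rho \in \text{MET}_n\}$, we have
\[
\text{cl}(S) \;=\; \bigcap_{\substack{\rho \in \text{MET}_n \\ S \subseteq T(\rho)}} T(\rho).
\]
By the equivalence above, the indexing set $\{\rho \in \text{MET}_n : S \subseteq T(\rho)\}$ equals $\{\rho \in \text{MET}_n : \langle \sigma(S), \rho\rangle = 0\}$, which depends on $S$ only through the vector $\sigma(S)$.

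Finally, assuming $\sigma(S) = \sigma(\tilde{S})$, the two indexing sets of pseudometrics coincide, so the intersections defining $\text{cl}(S)$ and $\text{cl}(\tilde{S})$ are the same, giving $\text{cl}(S) = \text{cl}(\tilde{S})$. There is no real obstacle here; the whole argument rests on turning ``$S \subseteq T(\rho)$'' into the single linear equation $\langle \sigma(S),\rho\rangle = 0$, after which the conclusion is immediate.
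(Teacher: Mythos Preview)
Your proof is correct and follows essentially the same approach as the paper: both hinge on the observation that for $\rho\in\text{MET}_n$ one has $\langle\sigma(S),\rho\rangle=\sum_{s\in S}\langle\Delta_s,\rho\rangle\ge 0$ with equality iff $S\subseteq T(\rho)$. The paper picks specific representatives $\rho,\tilde\rho$ realizing $\text{cl}(S)$ and $\text{cl}(\tilde S)$ and shows double containment, whereas you phrase it as the entire indexing set $\{\rho:S\subseteq T(\rho)\}$ depending only on $\sigma(S)$; these are the same argument.
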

\begin{proof}
Let $\rho,\tilde{\rho}\in \text{MET}_n$ be such that 
$\text{cl}(S) =T(\rho)$, $\text{cl}(\tilde{S}) =T(\tilde{\rho})$. 
By definition, $\langle \rho ,\Delta_s \rangle = 0$ holds for all $s\in S$. In particular,
	\[0 = \sum_{s\in S} \langle \rho, s\rangle = \langle \rho,\sigma(S)\rangle = \langle \rho,\sigma(\tilde{S})\rangle = \sum_{s\in \tilde{S}} \langle \rho, s\rangle.\]
	But $\langle \rho, s\rangle\ge 0$ for all $s\in \binom{[n]}{3}^\ast$, and therefore $\langle \rho, s\rangle=0$ for all $s\in \tilde{S}$. This implies $\tilde{S} \subseteq T(\rho)$ and hence $T(\tilde{\rho}) = \text{cl}(\tilde{S})\subseteq T(\rho)$. By the same argument,  $T(\rho)\subseteq T(\tilde{\rho})$.
\end{proof}
Now we can prove \cref{thm:face_bound}.
\begin{proof}(\Cref{thm:face_bound})
For every $F\in \mathcal{F}$ we fix a subset $B\subseteq F$ such that $\{\Delta_t:t\in B\}$ 
is a basis of the subspace spanned by $\{\Delta_t:t\in F\}$. Note that $\text{cl}(B) = F$. So, by \cref{lem:sig_and_closure} %the signature of all these sets are unique.\\
all these sets have distinct 
signatures. It therefore suffices to bound the number of such signatures.\\
Clearly, $0\le |B|\le \binom{n}{2}$, since
$B$ is a basis for some subspace of $\mathbb{R}^{\binom{[n]}{2}}$. 
How many distinct signatures 
$\sigma(S)$ are there of sets 
$S\subset \binom{[n]}{3}^\ast$ of 
given cardinality 
$0\le m \le \binom{n}{2}$? 
For every $t\in S$, the vector $\Delta_t$ has two $+1$ entries and one $-1$ entry. 
Therefore, it is possible to express $\sigma(S)$ as $v_p - v_n$, where 
$v_p,v_n$ are ${\binom{[n]}{2}}$-dimensional vectors of non-negative integer
entries such that $\langle v_n, \mathbf{1}\rangle = m$, and 
$\langle v_p, \mathbf{1}\rangle = 2m$. 
The number of such vectors $v_p$ does not exceed the number of integer solutions to the system 
\[\sum_{e\in \binom{[n]}{2}}x_e= 2m, ~~~ x_e\ge 0,~~~ \forall e\in \binom{[n]}{2},\]
which equals 
\[\binom{2m + \binom{n}{2}-1}{\binom{n}{2}-1}\le \binom{3\binom{n}{2}}{\binom{n}{2}} \le 2^{H(\frac{1}{3})\frac{3}{2}n^2(1+o(1))}.\]
Similarly, the number of vectors $v_n$ is bounded by
\[\binom{m + \binom{n}{2}-1}{\binom{n}{2}-1}\le \binom{2\binom{n}{2}}{\binom{n}{2}} \le 2^{n^2(1+o(1))}.\] 
Therefore, the number of signatures $\sigma(S)$ with $|S|=m$ is bounded by 
\[\left[2^{H(\frac{1}{3})\frac{3}{2}n^2(1+o(1))}\right] \left[2^{n^2(1+o(1))}\right] \le 2^{2.38n^2(1+o(1))}.\]
	Summing over $0\le m \le \binom{n}{2}$ yields the result.
\end{proof}

\subsection{Strictly Metric Systems in a Fixed Graph}
As before, $\mathscr{NS}(G)$ denotes the number
of neighborly strictly metric path systems of a given graph $G$.
We seek the maximum of 
$|\mathscr{NS}(G)|$ over all
$n$-vertex graphs $G$. As 
\cref{thm:strict_count} and 
\cref{lem:gnp_systems} show, $|\mathscr{NS}(G)|=2^{O(n^2)}$ 
for every $n$-vertex graph, yet
$|\mathscr{NS}(G)|=2^{\Theta(n^2)}$ 
for asymptotically almost all of them. 
We ask for the largest $\alpha>0$ 
for which there exist $n$-vertex graphs with
$|\mathscr{NS}(G)| \ge 2^{\alpha n^2}$.
As it turns out, balanced complete 
bipartite graphs yield 
$\alpha\ge\frac 18$.
\begin{theorem}\label[theorem]{thm:bip_strict}
There holds 
$|\mathscr{NS}(K_{\frac n2,\frac n2})| \ge 2^{\frac{n^2}{8}}$.
\end{theorem}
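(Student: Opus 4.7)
The plan is to exhibit a family of at least $3^{\binom{n/2}{2}}$ strictly metric neighborly path systems on $K_{n/2,n/2}$, which exceeds $2^{n^2/8}$ for all $n \ge 6$; the boundary case $n = 4$ is handled by a brief direct argument. The construction is in the spirit of \cref{lem:gnp_systems}, but it exploits the rigid bipartite structure to extract three realizable configurations per pair rather than two, which is exactly what is needed to overcome the shortfall of the naive two-choice scheme.

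Label the parts $A = \{a_1,\dots,a_k\}$, $B = \{b_1,\dots,b_k\}$ with $k = n/2$, and use the canonical matching $M = \{a_ib_i\}$. Define edge weights
\[ w(a_ib_i) = 1 + \eta_i, \qquad w(a_ib_j) = 2 + \epsilon_{ij} \ \text{for}\ i \ne j, \]
where $0 < \eta_1 < \eta_2 < \cdots < \eta_k$ is a small strictly ascending sequence and the $\epsilon_{ij}$ are tiny real parameters we will set. Since $K_{k,k}$ is bipartite, every $a_ia_j$-path has even length; the 2-step path $a_ib_la_j$ costs $\approx 3$ when $l \in \{i,j\}$ (it uses one matching edge) and $\approx 4$ otherwise, while length-$\ge 4$ paths cost at least $4$. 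Thus the shortest $a_ia_j$-path is $a_ib_{\sigma_{ij}}a_j$ with $\sigma_{ij} \in \{i,j\}$, and analogously each $\{b_i,b_j\}$-pair is served by $b_ia_{\tau_{ij}}b_j$ with $\tau_{ij} \in \{i,j\}$.

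Setting $\alpha_{ij} := \epsilon_{ji} - \epsilon_{ij}$ and $\gamma_{ij} := \eta_j - \eta_i > 0$ (for $i < j$), a direct comparison of the two candidate 2-step paths gives
\[ \sigma_{ij} = i \iff \alpha_{ij} < \gamma_{ij}, \qquad \tau_{ij} = i \iff \alpha_{ij} > -\gamma_{ij}. \]
Thus as $\alpha_{ij}$ ranges over $(-\infty,-\gamma_{ij})$, $(-\gamma_{ij},\gamma_{ij})$, $(\gamma_{ij},\infty)$, the configuration $(\sigma_{ij},\tau_{ij})$ takes the three distinct values $(i,j)$, $(i,i)$, $(j,i)$ respectively; the fourth combination $(j,j)$ is impossible with this weight ordering. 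Since each $\alpha_{ij}$ depends only on $\epsilon_{ij}$ and $\epsilon_{ji}$, which are disjoint from the variables of other pairs, the interval choices are independent and jointly realize $3^{\binom{k}{2}}$ distinct strictly metric neighborly systems. For $k \ge 3$ (i.e., $n \ge 6$) we have $3^{\binom{k}{2}} = 2^{\binom{k}{2}\log_2 3} \ge 2^{k^2/2} = 2^{n^2/8}$, since $\log_2 3 \cdot \frac{k(k-1)}{2} \ge \frac{k^2}{2}$ whenever $k \ge 2.71$. For $n = 4$ ($k = 2$) the construction with $\eta_1 < \eta_2$ produces three of the four neighborly systems on $K_{2,2}$; rerunning it with $\eta_1 > \eta_2$ produces the missing fourth, so all $4 = 2^{n^2/8}$ are strictly metric.

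The main technical hurdle is verifying \emph{strict} metricity: for each pair, not only must the chosen 2-step path be minimum, it must be strictly minimum among all alternatives, including the other 2-step paths through $b_l \notin \{i,j\}$ and any length-$\ge 4$ path. Choosing $\eta_i$ and $\epsilon_{ij}$ as generic distinct reals in a sufficiently small window (say, $|\eta_i|, |\epsilon_{ij}| < 1/10$) secures a gap of roughly $1$ between matching-edge paths ($\approx 3$) and doubly non-matching paths ($\approx 4$), and a gap of at least $1$ between any 2-step path ($\le 3 + o(1)$) and any length-$\ge 4$ alternative ($\ge 4$), while genericity eliminates accidental ties inside each interval. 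Consistency of the resulting system is automatic: it is neighborly and of diameter $2$, hence consistent by the observation at the start of Section~3.
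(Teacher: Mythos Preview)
Your argument is correct and follows the same template as the paper's proof—cheap matching edges so that only the two ``matching-assisted'' length-$2$ paths compete—but you push it further in a useful way. The paper only varies the $x_ix_j$-paths and obtains $2^{\binom{n/2}{2}}$ systems, which is $2^{n^2/8 - n/4}$ and so, strictly speaking, does not reach the stated $2^{n^2/8}$. You instead couple each $a_ia_j$-choice with the corresponding $b_ib_j$-choice via the single free parameter $\alpha_{ij}=\epsilon_{ji}-\epsilon_{ij}$, observe that exactly three of the four combinations $(\sigma_{ij},\tau_{ij})$ are realizable, and thereby get $3^{\binom{n/2}{2}}$ distinct neighborly strictly metric systems. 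Since $\log_2 3\cdot\binom{k}{2}\ge k^2/2$ for $k\ge 3$, this genuinely delivers the claimed bound for $n\ge 6$, and your separate treatment of $K_{2,2}$ handles $n=4$. The independence claim (that the $\alpha_{ij}$ can be set independently because $\{\epsilon_{ij},\epsilon_{ji}\}$ are disjoint across pairs) and the strictness verification (cost $\approx 3$ versus $\ge 3.8$ for non-matching $2$-paths and $>4$ for longer paths, with genericity breaking the remaining ties) are both sound. In short: same idea, but your version closes a small gap in the paper's arithmetic.
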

\begin{proof}
This proof can be viewed as a
simpler version of the proof of
\cref{lem:gnp_systems}. 
Let $X=\{x_1,\dots,x_{\frac n2}\}$ and 
$Y=\{y_1,\dots, y_{\frac n2}\}$ be the 
two parts of $K_{\frac n2,\frac n2}$'s 
vertex set. We introduce the perfect matching 
$M$ of all edges $x_iy_i$ for 
$i=1,\ldots, \frac n2$, and
think of the edges in $M$ as `cheap', so that
shortest paths tend to use them. 
For every $1\le i < j \le \frac n2$, there are precisely two $2$-hop paths
between $x_i$ and $x_j$ that use edges from $M$,
namely, $x_iy_ix_j$ and $x_iy_jx_j$. For every pair $x_i,x_j$ we may (independently) choose between one of these two paths. We associate
with every edge $x_iy_j$ 
an i.i.d.\ random variable $X_{i,j}$ that is uniformly distributed on $[0,\epsilon]$, for
some small constant $\epsilon>0$. 
These random variables act as random noise to make every shortest path unique. Define the following weight function $w:E\to \mathbb{R}_{>0}$ on $K_{\frac n2,\frac n2}$:
	\[w(x_iy_j) \coloneqq \begin{cases}
		1 + X_{i,j}& i = j \\
		1.1+ X_{i,j} & \text{if } x_iy_j \text{ is in some chosen path} \\
		1.2 + X_{i,j}& \text{Otherwise}
	\end{cases}\]
It's not difficult to see that every chosen path is uniquely shortest w.r.t.\ these edge weights. There are $\binom{\frac n2}{2}$ pairs $x_i,x_j$ and $2$ choices for each
such pair. The conclusion follows.
\end{proof}

\subsubsection{Monotone path systems}\label{sec:mono}
We now consider a family of neighborly diameter-$2$ path systems $\mathcal{P}$
in $\mathcal{J}_n$, the join of a 
clique and an anti-clique.
To wit, $V(\mathcal{J}_n)$ 
is the disjoint union of the sets 
$\{x_1,\ldots,x_{n}\}$ and
$\{y_1,\ldots,y_{n}\}$, and the only
pairs of non-neighbors are
$x_i, x_j$ with $1\le i< j\le n$. 
The path system $\mathcal{P}$ that we
are considering is neighborly and diameter $2$, so we
only need to specify, for every
pair $1\le i< j\le n$, the index
$1\le k\le n$, such that
\begin{equation}\label{eq:wohin}
P_{x_i x_j}=x_i y_k x_j.\end{equation}

It is convenient to represent the
(unique) résumé of $\mathcal{P}$
as a symmetric
matrix $M$ where $m_{ij}=m_{ji}=k$
when \cref{eq:wohin} holds
(and $m_{ii}=\ast$ for all $i$).
We call $\mathcal{P}$ {\em monotone}
if the entries in every row, ignoring the diagonal entry, are non-decreasing. By symmetry, the same
holds when we go down columns. In terms of résumés, if $\mathcal{P}$ is monotone and say, $f(x_i, x_j) = y_a$ and $f(x_i, x_k) = y_b$ then $j\le k$ implies $a\le b$.

\begin{theorem}\label[theorem]{thm:monotone}
Every monotone path system
as above, is strictly metric.
\end{theorem}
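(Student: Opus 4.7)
My plan is to exhibit an explicit positive edge-weighting of $\mathcal{J}_n$ under which every path in $\mathcal{P}$ is the unique $w$-shortest path. I would set $w(y_iy_j)=1$ for every $i\ne j$, and $w(x_iy_k)=\tfrac{1}{2}+\varepsilon\,\phi_i(k)$, where $\varepsilon>0$ is small and $\phi_i:[n]\to[0,1]$ will be constructed from the résumé of $\mathcal{P}$. Following the style of the proof of \Cref{thm:bip_strict}, I would first verify that for $\varepsilon$ small enough, every $y_iy_j$-edge, every $x_iy_k$-edge, and every 3-or-more-hop candidate for an $x_ix_j$ path is strictly longer than the designated 2-hop path $x_iy_{m_{ij}}x_j$. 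All remaining strictness then boils down to a single condition: for every $i<j$ and every $k\ne m_{ij}$,
\[
\phi_i(k)+\phi_j(k) \;>\; \phi_i(m_{ij})+\phi_j(m_{ij}).
\]

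The bulk of the work is to construct $\phi_i$ satisfying this inequality, and this is where I invoke monotonicity. I plan a two-scale construction. Let $S_i=\{m_{ij}:j\ne i\}\subseteq [n]$, and set
\[
\phi_i(k) \;=\; \mathbbm{1}[k\notin S_i] \;+\; \delta\cdot\tau_i(k),
\]
with $0<\delta\ll 1$ and a tiebreaker $\tau_i:[n]\to\mathbb{R}_{\ge 0}$. The indicator term forces the argmin of $\phi_i+\phi_j$ to land in $S_i\cap S_j$ --- an intersection which always contains $m_{ij}$ --- and $\tau_i$ is designed to pick out $m_{ij}$ uniquely within this intersection. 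A natural choice is $\tau_i(k)=w_i(k-\mu_i)^2$, where $\mu_i$ is a per-row anchor (say, the median of row $i$'s values) and $w_i>0$ is an adjustable weight, so that on $S_i\cap S_j$ the tiebreaker selects the element closest to the weighted midpoint $(w_i\mu_i+w_j\mu_j)/(w_i+w_j)$.

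The main obstacle is choosing the data $(\mu_i,w_i)$ so that this weighted midpoint is consistently closest to $m_{ij}$ inside $S_i\cap S_j$ for every pair $(i,j)$; this is precisely where double-monotonicity of $m$ enters. As $j$ increases, both the anchor $\mu_j$ and the set $S_i\cap S_j$ shift monotonically, which enables an inductive choice of weights $w_i$: process rows in order and, when adding row $i$, pick $w_i$ so that each previously considered pair $(i',i)$ points to the correct $m_{i'i}\in S_{i'}\cap S_i$. If a purely quadratic tiebreaker is insufficient in a tight case, an additional low-order generic perturbation of $\tau_i$ can be inserted to break residual ties. Once the displayed strict inequality is established, the remaining triangle-inequality and uniqueness checks for this weight function are routine, provided $\varepsilon$ is taken much smaller than $\delta$.
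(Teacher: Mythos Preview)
Your reduction is sound: with the edge weights you propose, the only competitive $x_ix_j$-paths are the two-hop $x_iy_kx_j$, and strict metrizability indeed boils down to
\[
\phi_i(k)+\phi_j(k)>\phi_i(m_{ij})+\phi_j(m_{ij})\qquad(k\ne m_{ij}),
\]
which is precisely the specialization of \Cref{lem:diam_2_inequalities} to $\mathcal{J}_n$. The indicator term $\mathbbm{1}[k\notin S_i]$ also does what you claim: for small $\delta$ it eliminates every $k\notin S_i\cap S_j$, and $m_{ij}\in S_i\cap S_j$ always.

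The gap is the tiebreaker. With $\tau_i(k)=w_i(k-\mu_i)^2$, the residual inequality for $k\in S_i\cap S_j$ is equivalent to asking that the weighted mean $\bar\mu_{ij}=(w_i\mu_i+w_j\mu_j)/(w_i+w_j)$ fall into the open interval $\bigl(\tfrac{k^-+m_{ij}}{2},\tfrac{k^++m_{ij}}{2}\bigr)$, where $k^{\pm}$ are the neighbors of $m_{ij}$ in $S_i\cap S_j$. In your induction, when you ``add row $i$'' with $\mu_i$ already fixed (as the row median), the single free parameter $w_i$ must simultaneously satisfy $i-1$ such interval constraints, one for each $i'<i$. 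For $i\ge 3$ this is overdetermined, and you give no argument that monotonicity of $M$ makes these intervals compatible. The appeal to a ``generic perturbation'' does not help here: a perturbation can break ties, but it cannot move the argmin of $\tau_i+\tau_j$ from a wrong element of $S_i\cap S_j$ to the correct one $m_{ij}$. Showing that some choice of the data $\tau_i$ exists is exactly the content of the theorem; you have asserted it rather than proved it.

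The paper takes a completely different, non-constructive route. It never produces weights. Instead it passes to the LP-dual certificate of \Cref{lem:diam_2_witness}: if $\mathcal{P}$ were not strictly metric there would be coefficients $\alpha_{x_i,x_j;y_k}\ge 0$ summing to $1$ in $k$, with the appropriate matching of coordinate sums and with $\alpha_{a,b;f(a,b)}<1$ for some pair. One then looks at the lexicographically first pair $(a,b)$ where this strict inequality occurs and, using that the entries of $M$ are non-decreasing along rows \emph{and} columns, counts occurrences of the value $f(a,b)$ in row $a$ and in a suitable column to reach the contradiction $0>0$. Monotonicity enters precisely through the fact that equal entries in a row occupy a contiguous block, which is what makes the counting go through.
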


We start with some technical comments 
on consistent diameter-$2$ path 
systems. 
Below we use the convention that $\alpha_{a,a;c} = \alpha_{a,b;a} = 0$.
\begin{lemma}\label[lemma]{lem:diam_2_inequalities}
Let $f:\binom{[n]}{2}\to [n]$ be the
résumé of a diameter-$2$, consistent
$n$-vertex path system $\mathcal{P}$. Then $\mathcal{P}$ is strictly metric if and only if there exists $x\in \mathbb{R}^{\binom{[n]}{2}}$ satisfying the following system of linear inequalities,
\begin{equation}\label{eqn:sm}
x_{a,f(a,b)} + x_{f(a,b),b} < x_{a,c} + x_{c,b} \hspace{10mm} \forall a,b \in \text{dom}(f), c\neq f(a,b).
	\end{equation}
\end{lemma}
Before we go into the proof, let us
emphasize that we {\em do not limit
ourselves to non-negative $x$}, as the
proof amply illustrates.
\begin{proof}(\Cref{lem:diam_2_inequalities})
Clearly, if $\mathcal{P}$ is strictly metric, then the system (\ref{eqn:sm}) is feasible,
even with positive $x$.  

Next we make some comments on feasible solutions to the system (\ref{eqn:sm}).
We express such solutions in terms of $w\in \mathbb{R}^{\binom{[n]}{2}}$, and assume
w.l.o.g.\ that $w_{u,v} =\infty$ for every edge
$uv$ that does not appear in $\mathcal{P}$. We also
observe that feasible solutions are
translation invariant. Namely, for any real constant $K$, the function
$w$ is a feasible solution if and only if $w+K\mathbf{1}$ is. 

To proceed, let us choose $K>0$ much bigger than $|w_{u,v}|$  
for every edge $uv\in \mathcal{P}$.
We argue that all paths in $\mathcal{P}$ are uniquely shortest on the complete graph with edge weights $\tilde{w} = w+K\mathbf{1}$.
For  $P_{u,v} \in \mathcal{P}$ we need to show that every competitor $uv$-path $Q$ has strictly larger weight. We can clearly assume that $Q$ uses only edges in $\mathcal{P}$ or else $\tilde{w}(Q) = \infty$. If $P_{u,v}$ is an edge, then necessarily any other $uv$-path $Q$ has $2$ or more edges, whence 
$\tilde{w}(P_{u,v}) = K + O_K(1) < 2 K + O_K(1) < \tilde{w}(Q) + O_K(1)$.

If $P_{u,v}$ is a $2$-edge path,
then $uv\notin \mathcal{P}$ and so $\tilde{w}(uv)=\infty$. 
If $Q$ is a $uv$-path of $3$ or more edges, then 
$\tilde{w}(P_{u,v}) = 2K + O_K(1) < 3 K + O_K(1) < \tilde{w}(Q) + O_K(1)$. 
Finally, if $Q = uzv$ is another $uv$-path of $2$ edges then 
	\[\tilde{w}(P_{u,v}) = \tilde{w}_{u,f(u,v)} + \tilde{w}_{f(u,v),v} < \tilde{w}_{u,z} + \tilde{w}_{z,v}  =\tilde{w}(Q),\]
	since $\tilde{w}$ is a feasible solution of the above system of inequalities.
\end{proof}
\Cref{lem:diam_2_inequalities} yields
the following result which we use to
prove \cref{thm:monotone}. It is also possible to accomplish this using \cref{lem:semi_met_char}.
\begin{lemma}\label[lemma]{lem:diam_2_witness}
Let $f:\binom{[n]}{2}\to [n]$ be the
résumé of a diameter-$2$, consistent
$n$-vertex path system $\mathcal{P}$. Then $\mathcal{P}$ is not strictly metric if and only if there exist
coefficients $\alpha_{a,b;c}\ge 0$ such that for all $a,b\in \text{dom}(f)$, $\sum_{i=1}^{n}\alpha_{a,b;i} = 1$ and 
	\[\sum_{a,b \in \text{dom}(f)} (e_{a,f(a,b)} + e_{f(a,b),b}) = \sum_{a,b \in \text{dom}(f)} \sum_{i=1}^{n}\alpha_{a,b;i} (e_{a,i} + e_{i,b})\]
	and $\alpha_{a,b;f(a,b)} < 1$ for some $a,b\in \text{dom}(f)$.
\end{lemma}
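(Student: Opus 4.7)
\textbf{Proof plan for Lemma \ref{lem:diam_2_witness}.}
The plan is to view the statement as the Gordan alternative of \Cref{lem:diam_2_inequalities}. By that lemma, $\mathcal{P}$ is strictly metric if and only if the system of strict linear inequalities
\[
\langle v_{a,b;c},\, x\rangle > 0,\qquad (a,b)\in\text{dom}(f),\ c\neq f(a,b),
\]
has a real solution $x\in\mathbb{R}^{\binom{[n]}{2}}$, where
\[
v_{a,b;c} \;:=\; e_{a,c}+e_{c,b}-e_{a,f(a,b)}-e_{f(a,b),b}.
\]
So $\mathcal{P}$ fails to be strictly metric precisely when this strict system is infeasible.

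I will invoke Gordan's theorem of the alternative: the system $Ax>0$ has no solution if and only if there exist non-negative coefficients $\lambda_{a,b;c}\ge 0$, indexed by $(a,b)\in\text{dom}(f)$ and $c\neq f(a,b)$, not all zero, with
\[
\sum_{(a,b)\in \text{dom}(f)}\sum_{c\neq f(a,b)}\lambda_{a,b;c}\, v_{a,b;c} \;=\; 0.
\]
This is essentially the claim of the lemma once the coefficients $\lambda$ are repackaged as probability-like weights $\alpha_{a,b;\cdot}$. To do so, scale all $\lambda$'s by a small constant $c>0$ (preserving the kernel relation) so that $c\sum_{i\neq f(a,b)}\lambda_{a,b;i}\le 1$ for every $(a,b)\in\text{dom}(f)$. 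Then set
\[
\alpha_{a,b;i}\;:=\;\begin{cases} c\,\lambda_{a,b;i} & i\neq f(a,b),\\[2pt] 1-c\sum_{j\neq f(a,b)}\lambda_{a,b;j} & i=f(a,b). \end{cases}
\]
These $\alpha$'s are non-negative with $\sum_{i}\alpha_{a,b;i}=1$. Using this normalization, the target equation of the lemma becomes equivalent to
\[
\sum_{(a,b)\in \text{dom}(f)}\sum_{i=1}^{n}\alpha_{a,b;i}\bigl[(e_{a,i}+e_{i,b})-(e_{a,f(a,b)}+e_{f(a,b),b})\bigr]=0,
\]
since the $i=f(a,b)$ summand is identically zero, and the remaining terms reproduce the Gordan identity. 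The extra condition $\alpha_{a,b;f(a,b)}<1$ for some $(a,b)$ is exactly the condition that the original $\lambda$ is not the zero vector.

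For the converse, assume coefficients $\alpha_{a,b;i}$ as in the statement exist. Set $\lambda_{a,b;i}:=\alpha_{a,b;i}$ for $i\neq f(a,b)$; the normalization $\sum_i\alpha_{a,b;i}=1$ and the hypothesized equality rearrange to the Gordan identity above, and the extra condition forces some $\lambda_{a,b;i}>0$. If $x$ were any solution of the strict system of \Cref{lem:diam_2_inequalities}, then $\langle v_{a,b;i},x\rangle>0$ for every such $i\neq f(a,b)$, so pairing $x$ with the Gordan identity would give $0=\sum\lambda_{a,b;i}\langle v_{a,b;i},x\rangle>0$, a contradiction. Hence the system is infeasible and, by \Cref{lem:diam_2_inequalities}, $\mathcal{P}$ is not strictly metric.

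The main obstacle, such as it is, lies only in the bookkeeping: one must identify the equation in the statement with the Gordan kernel relation via the trick of using $\sum_i\alpha_{a,b;i}=1$ to absorb the $f(a,b)$-summand, and one must choose the scaling constant $c$ so the normalization $\alpha_{a,b;f(a,b)}\in[0,1]$ is legitimate while preserving the fact that at least one of the off-diagonal coefficients remains strictly positive. Once this is handled, the lemma is a direct translation of Gordan's alternative for the strict system produced by \Cref{lem:diam_2_inequalities}.
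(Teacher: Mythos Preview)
Your proposal is correct and follows essentially the same route as the paper: both invoke \Cref{lem:diam_2_inequalities}, apply Gordan's alternative to the resulting homogeneous strict system, then rescale the dual multipliers so that each block sums to at most $1$ and complete each block to a probability vector by placing the slack on the index $f(a,b)$. Your explicit introduction of the vectors $v_{a,b;c}$ and the observation that the $i=f(a,b)$ summand vanishes identically make the bookkeeping slightly more transparent, but the argument is the same.
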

\begin{proof}
This claim follows from \cref{lem:diam_2_witness} and some
form of
LP duality, such as Farkas Lemma or Gordan's Lemma \cite{Dan}. Indeed, the system
\begin{equation*}
x_{a,f(a,b)} + x_{f(a,b),b} < x_{a,c} + x_{c,b} \hspace{10mm} \forall a,b \in \text{dom}(f), c\notin  \{a,b, f(a,b)\}
	\end{equation*}
	is not feasible if and only if there exists $\beta_{a,b;c}\ge 0$ not all zero such that 
	\[\sum_{a,b\in \text{dom}(f)}\sum_{i=1}^{n}\beta_{a,b;i}(e_{a,f(a,b)} + e_{f(a,b),b}) = \sum_{a,b\in \text{dom}(f)}\sum_{i=1}^{n}\beta_{a,b;i}(e_{a,i} + e_{i,b}),\]
    where we set $\beta_{a,b;f(a,b)} = 0$.
	By scaling if necessary, we can assume that $\sum_{i=1}^{n}\beta_{a,b;i}<1$ for all $a,b$. Adding $\sum_{a,b\in \text{dom}(f)} \left(1-\sum_{i=1}^{n}\beta_{a,b;i}\right)(e_{a,f(a,b)} + e_{f(a,b),b})$ to both sides we get 
	\[\sum_{a,b\in \text{dom}(f)}(e_{a,f(a,b)} + e_{f(a,b),b}) = \sum_{a,b\in \text{dom}(f)}\sum_{i=1}^{n}\alpha_{a,b;i}(e_{a,i} + e_{i,b}),\]
	where 
	\[\alpha_{a,b;i} =\begin{cases}
		\beta_{a,b;i} & i\neq f(a,b) \\
		1-\sum_{i=1}^{n}\beta_{a,b;i} & i = f(a,b)
	\end{cases}\]
	and $\alpha_{a,b;f(a,b)} < 1$ for at least one $a,b$.

\end{proof}
Now we prove \cref{thm:monotone}
\begin{proof}
In this proof we also refer to the (monotone) matrix $M$
mentioned after \eqref{eq:wohin}, which may help the reader's intuition.\\ 
If $\mathcal{P}$ is not strictly metric, then by \cref{lem:diam_2_witness}, this is witnessed 
by some non-negative $\alpha_{x_i,x_j;y_k}$ for which 
$\sum_{k=1}^{n}\alpha_{x_i,x_j;y_k}=1$ and
\[\sum_{i<j} (e_{x_i,f(x_i,x_j)} + e_{x_j, f(x_i,x_j)}) = \sum_{i<j} \sum_{k=1}^{n}\alpha_{x_i,x_j;y_k} (e_{x_i,y_k} + e_{x_j,y_k})\]
and $\alpha_{x_i,x_j;f(x_i,x_j)} < 1$ for some $i<j$.
Notice that the coefficient of $e_{x_a,y_c}$
counts the paths in $\mathcal{P}$ 
which start at $x_a$ and go via $y_c$. In particular, for fixed $x_a$ and $y_c$
\[\left|\{j\ge 1 : f(x_a, x_j) = y_c \}\right| = \sum_{j =1}^{n} \alpha_{x_a,x_j;y_c}.\]
To simplify our notation, we write below
$\{a,b;c\}$ rather than
$\{x_a,x_b;y_c\}$ and
$f(a,b) = c$ instead of   
$f(x_a,x_b) = y_c$. \\
Let $(a,b)$ with $a<b$ be the lexicographically first
pair of indices such that $\alpha_{a,b;f(a,b)} < 1$.
As discussed above, by \cref{lem:diam_2_witness} we have
\[\left|\{j\ge 1 : f(a, j) = f(a,b) \}\right| = \sum_{j =1}^{n} \alpha_{a,j;f(a,b)},\]since
both sides count the occurrences of $f(a,b)$ in row $a$ of $M$.\\
By the minimality assumption, $\alpha_{a,j;f(a,j)} = 1$ for  $j < b$ and $\alpha_{a,j;k} = 0$ whenever 
 $j < b$, $k\neq f(a,j)$. So we can write
\[\left|\{j \ge b : f(a, j) = f(a,b) \}\right| = \sum_{j =b}^{n} \alpha_{a,j;f(a,b)},\]
where both sides count the occurrences of $f(a,b)$ in row $a$ of $M$
starting from column $b$.\\
Since $M$ is monotone, if two entries in the same row are
equal, then all entries in between them are equal to them as well.\\
In other words, there exists some $c > b$ such that $f(a,j) = f(a,b)$ 
for all $b\le j < c$ and $f(a,j) >  f(a,b)$ for $j\ge c$. 
We can therefore write
\[c - b =\left|\{j\ge b : f(a, j) = f(a,b) \}\right| = \sum_{j =b}^{c-1} \alpha_{a,j;f(a,b)} +\sum_{j =c}^{n} \alpha_{a,j;f(a,b)}.\]
Since $\alpha_{i,j;k} \le 1$ and $\alpha_{a,b;f(a,b)}<1$ we have $\sum_{j=b}^{c-1} \alpha_{a,j;f(a,b)} < c-b$. In particular, there is some $l\ge c$ such that $\alpha_{a,l;f(a,b)} > 0$. Now, it must also be the case that 
\[\left|\{i \ge 1 : f(i, l) = f(a,b) \}\right| = \sum_{i=1}^{n}\alpha_{i,l,f(a,b)}.\]
By the minimality of $a$ we have that $\alpha_{i,l,f(i,l)} = 1$ for $i<a$ and $\alpha_{i,l,c} = 0$ for $i<a$, $c\neq f(i,l)$. Therefore,
\[\left|i\ge a : f(i,l) = f(a,b) \right| = \sum_{i=a}^{n}\alpha_{i,l,f(a,b)}.\]
As before, both sides count the occurrences of $f(a,b)$ in column $l$ starting from row $a$.
On the other hand, since $f(a,l) > f(a,b)$, by monotonicity $f(i,l) \ge f(a,l)> f(a,b)$ for all $i\ge a$. Therefore,
\[0=\left|i\ge a : f(i,l) = f(a,b) \right|  = \sum_{i=a}^{n}\alpha_{i,l,f(a,b)} \ge \alpha_{a,l,f(a,b)}>0,\]
a contradiction.
\end{proof}
Monotone path systems yield a slight improvement of our previous bound.
\begin{proposition}\label[proposition]{prop:monotone_count}
The number of monotone path systems in
the $n$-vertex graph $\mathcal{J}_{\frac{n}{2}}$ is $2^{\alpha n^2(1+o(1))}$.  
In particular,
\[|\mathscr{NS}(\mathcal{J}_{\frac{n}{2}})|\ge 2^{\alpha n^2(1+o(1))},\] where $\alpha = \frac{3}{16}\left(3\log_23 - 4\right)\approx .141$
\end{proposition}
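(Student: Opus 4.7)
My plan is to reformulate the count of monotone path systems in $\mathcal J_{n/2}$ as a count of symmetric plane partitions (SPPs) in a cubic box, and then invoke MacMahon's product formula together with a Riemann-sum asymptotic calculation to pin down the constant $\alpha$. Write $N=n/2$. By the discussion after \eqref{eq:wohin}, a monotone path system in $\mathcal J_N$ is encoded by a symmetric matrix $M=(m_{ij})_{i\neq j}$ with entries in $[N]$ whose off-diagonal rows are non-decreasing; denote the number of such matrices by $F(N)$, so that we aim to prove $\log_2 F(N)=\alpha n^2(1+o(1))$.

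The key combinatorial step is a bijective reindexing. I send each pair $\{i,j\}$ with $i<j$ to the position $(r,c)=(i,j-1)$ in the weak upper triangle of $[N-1]\times[N-1]$, defining $a_{r,c}:=m_{r,c+1}$ and extending by $a_{c,r}:=a_{r,c}$. A short check, using that off-diagonal row monotonicity of $M$ together with symmetry forces column monotonicity, shows that the resulting symmetric $(N-1)\times(N-1)$ matrix $A=(a_{r,c})$ has weakly increasing rows (and hence columns) if and only if $M$ is a valid monotone matrix. Complementing $a_{r,c}\mapsto N+1-a_{r,c}$ then identifies $F(N)$ with $|\mathrm{SPP}(N-1,N-1,N-1)|$, the number of symmetric plane partitions in an $(N-1)^3$ box. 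MacMahon's classical product formula (proved by Andrews) expresses this as
\[
|\mathrm{SPP}(n,n,n)|\;=\;\prod_{i=1}^{n}\frac{2i-1+n}{2i-1}\prod_{1\le i<j\le n}\frac{i+j+n-1}{i+j-1}.
\]

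Finally, I extract the asymptotics. The linear product contributes only $O(n\log n)=o(n^2)$, while the double product dominates. Scaling $i=xn$ and $j=yn$, the sum $\sum_{i<j}\log_2\tfrac{i+j+n-1}{i+j-1}$ becomes a Riemann sum for $\tfrac{n^2}{2}\iint_{[0,1]^2}\log_2\tfrac{x+y+1}{x+y}\,dx\,dy$. This integral is evaluated by successive antidifferentiation of $(y+c)\log(y+c)$ for $c=0,1,2$, and simplifies to $\tfrac{9}{4}\log_2 3-3$. Since $4\alpha=\tfrac{9}{4}\log_2 3-3$, we get $\log_2 F(N)\sim 4\alpha N^2=\alpha n^2$, as required; the companion bound $|\mathscr{NS}(\mathcal J_{n/2})|\ge 2^{\alpha n^2(1+o(1))}$ then follows from Theorem~\ref{thm:monotone}. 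The hard part is the asymptotic step: one must verify carefully that all subleading contributions (the linear product, the Riemann-approximation boundary, and corner terms) are $o(n^2)$, and compute the double integral to match the specific constant $\alpha=\tfrac{3}{16}(3\log_2 3-4)$ exactly.
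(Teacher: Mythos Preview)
Your proposal is correct and follows essentially the same route as the paper: encode monotone systems as symmetric plane partitions, apply the MacMahon--Andrews product formula, and extract $\alpha$ via an integral asymptotic. Your reindexing $a_{i,j-1}:=m_{i,j}$ even gives an \emph{exact} bijection $F(N)=S(N-1,N-1)$, which is slightly cleaner than the paper's sandwich $S(N,N-1)/N^{N}\le F(N)\le S(N,N-1)$, and you compute the asymptotics of the double product directly rather than routing through $N(n,n,n)$ via $S(r,t)^{2}=N(r,r,t)\prod_{i}\tfrac{2i+t-1}{2i-1}$ as the paper does. One minor slip to fix: the full-square integral $\iint_{[0,1]^{2}}\log_{2}\tfrac{x+y+1}{x+y}\,dx\,dy$ equals $\tfrac{9}{2}\log_{2}3-6=8\alpha$, not $4\alpha$; after multiplying by $N^{2}/2$ you still obtain $4\alpha N^{2}=\alpha n^{2}$, so the conclusion is unaffected.
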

The proof of \cref{prop:monotone_count} builds on the notion of {\em plane partitions}.
These generalization of integer partitions were introduced in \cite{Mac}. 
Let us recall some of the essentials.
A plane partition of the positive integer $n$ is a two-dimensional array of non-negative 
integers $\pi_{i,j}$ that is non-increasing in both indices and which sums up to $n$, i.e. $\pi_{i+1,j} \le \pi_{i,j}$, $\pi_{i,j+1}\le \pi_{i,j}$ and $\sum_{i,j}\pi_{i,j} = n$. 
An $(r,s,t)$-{\em boxed plane partition} is a plane partition such that $\pi_{i,j}\le t$ with $\pi_{i,j}$ non-zero only when $i\le r$ and $j\le s$. In other words, it is an $r\times s$ matrix with entries in $\{0,1\dots,t\}$ 
whose rows and columns are non-increasing. For example, here is a $(3,4,5)$-boxed plane partition of $21$:
\[\begin{matrix}
	5 & 3 & 3 & 1\\
	4 & 2 & 1 & 0\\
	2 & 0 & 0 & 0
\end{matrix}.\]
MacMahon \cite{Mac} found a formula for 
$N(r,s,t)$, the number of $(r,s,t)$-boxed plane partitions.
\begin{proposition}\label[proposition]{prop:macmahon_formula}
	For all $r,s,t\ge 1$ we have 
	\[N(r,s,t) = \prod_{i=1}^{r}\prod_{j=1}^{s} \frac{i+j+t-1}{i+j-1}.\]
\end{proposition}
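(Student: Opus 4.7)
The plan is to derive MacMahon's formula via the Lindström--Gessel--Viennot (LGV) lemma together with an evaluation of a determinant of binomial coefficients. First I would set up the standard bijection between $(r,s,t)$-boxed plane partitions $\pi=(\pi_{i,j})$ and families of $s$ pairwise non-intersecting lattice paths in $\mathbb{Z}^2$ using unit north and east steps. Concretely, associate with the $j$-th column of $\pi$ a path $P_j$ from a source $A_j=(1-j,0)$ to a sink $B_j=(t-j+1,r)$, in which the $i$-th horizontal step occurs at height $r-i$ and at $x$-coordinate $\pi_{i,j}-j$. With this shift, the weak inequalities $\pi_{i,j}\ge \pi_{i,j+1}$ along rows translate exactly into strict non-intersection of $P_j$ and $P_{j+1}$, and the bounds $0\le \pi_{i,j}\le t$ become the confinement of each $P_j$ to the appropriate strip.

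Next I would apply LGV. Because the sources $A_1,\dots,A_s$ and sinks $B_1,\dots,B_s$ are in ``compatible position''---any $\sigma\in S_s$ other than the identity forces two paths $P_j$ from $A_j$ to $B_{\sigma(j)}$ to cross---the signed sum in the LGV lemma collapses to the single identity term, giving
\[
N(r,s,t) \;=\; \det\!\left[\binom{r+t}{\,t+i-j\,}\right]_{1\le i,j\le s},
\]
where the $(i,j)$-entry counts unrestricted lattice paths from $A_j$ to $B_i$.

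The last step is to evaluate this $s\times s$ determinant in closed form. One convenient route is to normalize rows so that the matrix becomes a specialization of the Weyl numerator $\det\!\left[x_i^{\lambda_j+s-j}\right]$ at the rectangular partition $\lambda=(t^{s})$ and the point $x_1=\cdots=x_r=1$. Dividing by the Vandermonde $\det\!\left[x_i^{s-j}\right]$ and taking the limit $x_i\to 1$ identifies the determinant with the Schur polynomial $s_{(t^{s})}(1,\dots,1)$, so Weyl's dimension formula (equivalently, the hook-content formula for the rectangle) yields the double product $\prod_{i=1}^{r}\prod_{j=1}^{s}(i+j+t-1)/(i+j-1)$. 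An alternative route is a direct telescoping of the determinant via successive column operations that reduce it to a Cauchy-type determinant with a known product evaluation.

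The bijection is essentially bookkeeping and LGV is a black box, so the main obstacle is the determinant evaluation: converting $\det\!\left[\binom{r+t}{t+i-j}\right]$ into MacMahon's double product is not automatic and requires either the Weyl/Schur identification together with the dimension formula, or a careful sequence of row and column operations producing a Cauchy-style cancellation. Either method is standard in the theory of symmetric functions and suffices to complete the proof.
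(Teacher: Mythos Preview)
The paper does not actually prove this proposition: it is quoted as a classical result of MacMahon and used as a black box (together with Andrews' companion formula) to obtain the asymptotics in \Cref{lem:macmahon_asymptotics}. So there is no ``paper's own proof'' to compare against; any correct argument you supply goes strictly beyond what the paper does.

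Your LGV-plus-determinant strategy is the standard modern route and is perfectly viable. That said, as written the bookkeeping does not line up in two places. First, with your sources $A_j=(1-j,0)$ and sinks $B_i=(t-i+1,r)$ the number of NE-paths from $A_j$ to $B_i$ is $\binom{r+t+j-i}{r}$, not $\binom{r+t}{t+i-j}$; the determinant $\det\bigl[\binom{r+t}{t+i-j}\bigr]_{i,j}$ you wrote down \emph{is} correct for $N(r,s,t)$, but it arises from a different (diagonally shifted) placement of sources and sinks in which every path has total length $r+t$. Relatedly, your description of the bijection says ``the $i$-th horizontal step occurs at height $r-i$'', which tacitly indexes horizontal steps by $i\in\{1,\dots,r\}$, whereas with your endpoints there are $t$ horizontal steps; one of these two pieces needs to be adjusted. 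Second, in the Schur-polynomial step you specialize at $x_1=\cdots=x_r=1$, but the Weyl numerator you wrote is an $s\times s$ determinant, and the correct identification is $N(r,s,t)=s_{(t^s)}(1^{r+s})$, not $s_{(t^s)}(1^{r})$; with only $r$ variables the values already fail for $(r,s,t)=(2,1,1)$. None of this affects the soundness of the plan, but the specific coordinates and the number of variables should be corrected before the argument goes through.
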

A {\em symmetric} $(r,t)$ plane partitions is an $(r,r,t)$-boxed plane partition s.t.\ $\pi_{i,j} = \pi_{j,i}$ for all $i$ and $j$.
In other words, this is a symmetric $r\times r$ matrix with entries in $\{0,1,\dots,t\}$ with non-increasing rows (and columns). Proving a conjecture of MacMahon, Andrews \cite{An} found a formula for  
$S(r,t)$, the number of symmetric $(r,t)$ plane partitions.
\begin{proposition}\label[proposition]{prop:macmahon_sym_formula}
	For all $r,t\ge 1$ we have 
	\[S(r,t) = \prod_{i=1}^{r}\frac{2i+t-1}{2i-1}\prod_{1\le i < j \le r} \frac{i+j+t-1}{i+j-1}.\]
\end{proposition}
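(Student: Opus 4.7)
The plan is to prove Andrews' formula by encoding symmetric plane partitions as non-intersecting lattice path families with a reflective symmetry and then evaluating the resulting count via a Pfaffian identity. First I would apply the standard Lindström--Gessel--Viennot bijection (already implicit in MacMahon's derivation of the formula for $N(r,s,t)$) that represents an $r\times r$ array of integers in $\{0,1,\ldots,t\}$, weakly decreasing along rows and columns, as an $r$-tuple of vertex-disjoint north/east lattice paths in a finite grid, each running from a fixed source $A_i$ to a fixed sink $B_i$. Under this encoding the symmetry condition $\pi_{i,j}=\pi_{j,i}$ is exactly the condition that the path family, as an unordered collection, is preserved by reflection across the diagonal $y=x$, which in turn corresponds to an explicit involution on the labelling of the sources and sinks.

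Next I would invoke the Pfaffian refinement of Lindström--Gessel--Viennot, due to Stembridge (and further developed by Okada and Krattenthaler), which enumerates non-intersecting path families fixed by a prescribed involution of the source/sink data as the Pfaffian of an antisymmetric matrix whose entries are explicit sums of binomial coefficients counting unconstrained paths between pairs of endpoints. This reduces the theorem to evaluating a single structured Pfaffian whose entries are simple combinations of binomials of the form $\binom{i+j+t-1}{i+j-1}$.

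Finally I would evaluate this Pfaffian in closed form and match the result to the claimed product. The product should split along the orbit structure of the involution: the fixed-point contribution produces the single product $\prod_{i=1}^{r}(2i+t-1)/(2i-1)$, while the contribution of off-diagonal orbits $\{i,j\}$ produces the double product $\prod_{1\le i<j\le r}(i+j+t-1)/(i+j-1)$. Heuristically this mirrors the way the diagonal and off-diagonal entries of a symmetric matrix split the double product in MacMahon's formula for $N(r,r,t)$ from \cref{prop:macmahon_formula}; the single product plays the role of a "square root" correction on the diagonal, and the double product is the off-diagonal square root.

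I expect the main obstacle to be this last step. The closed-form Pfaffian evaluation is a genuine special-function identity and not a consequence of elementary row operations, which is precisely the point at which Andrews' original argument deploys a delicate chain of $q$-hypergeometric transformations culminating in a ${}_6\phi_5$ summation; after establishing the $q$-analog of $S(r,t)$, one recovers the stated identity by taking $q\to 1$. A more modern route would appeal to one of Krattenthaler's determinant/Pfaffian evaluations to bypass the $q$-analog, but in either case a nontrivial hypergeometric identity is unavoidable at the core of the proof, and the rest of the argument is essentially combinatorial bookkeeping around it.
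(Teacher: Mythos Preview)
The paper does not prove this proposition at all: it is quoted as a known result, attributed to Andrews with a citation, and then used as a black box in the proof of \cref{lem:macmahon_asymptotics}. So there is nothing in the paper to compare your argument against.

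That said, your outline is a reasonable sketch of one of the standard modern proofs of Andrews' theorem. Encoding symmetric plane partitions as reflection-invariant families of non-intersecting lattice paths and then applying Stembridge's Pfaffian version of Lindstr\"om--Gessel--Viennot is exactly the route taken by Stembridge, Okada, and Krattenthaler; you are also right that the crux is the closed-form evaluation of the resulting Pfaffian, which ultimately rests on a nontrivial hypergeometric or determinant identity rather than elementary manipulations. Your heuristic about the diagonal/off-diagonal split matching the two products is correct in spirit. For the purposes of this paper, however, none of this is required: the proposition is simply cited, and all that is used downstream is the identity $S(r,t)^2 = N(r,r,t)\prod_{i=1}^{r}\frac{2i+t-1}{2i-1}$, which follows immediately by comparing the stated formulas in \cref{prop:macmahon_formula} and \cref{prop:macmahon_sym_formula}.
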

Let us return to path systems. A matrix $M$ as in \cref{sec:mono} is an
$(n,n-1)$ symmetric plane partition, with the only restriction that $m_{ii}=\ast$
for all $i$. This allows us to reduce the counting problem of such monotone systems
to the asymptotics of $S(n,n)$, because the additional factor that comes from all the possible
diagonals of $M$ is only a lower order term.

\begin{lemma}\label[lemma]{lem:macmahon_asymptotics}
% We have \[N(n,n,n) = e^{2\beta n^2(1+o(1))} \hspace{10mm}\text{ and } \hspace{10mm} S(n,n) = e^{\beta n^2(1+o(1))},\] where $\beta = \frac{3}{4}\left(3\ln 3 - 4\ln 2\right)$.\\
We have \[N(n,n,n) = \left(\frac{27}{16}\right)^{\frac{3}{2}n^2(1+o(1))}\hspace{10mm}\text{ and } \hspace{10mm} S(n,n) = \left(\frac{27}{16}\right)^{\frac{3}{4}n^2(1+o(1))}.\] 
\end{lemma}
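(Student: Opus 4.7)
The plan is to take logarithms of both formulas and recognize the resulting sums as Riemann sums for a double integral that can be evaluated in closed form.

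\textbf{Step 1 (Reduction to an integral).} Applying \cref{prop:macmahon_formula} with $r=s=t=n$,
\[
\log N(n,n,n) \;=\; \sum_{i=1}^{n}\sum_{j=1}^{n} \log\frac{i+j+n-1}{i+j-1}.
\]
After the substitution $x = i/n$, $y = j/n$, the summand is $\log\bigl(\tfrac{x+y+1-1/n}{x+y-1/n}\bigr)$, which converges uniformly to $\log\bigl(\tfrac{x+y+1}{x+y}\bigr)$ on any region bounded away from the line $x+y=0$. Standard Riemann-sum estimates, together with an elementary bound on the contribution of the cells near the corner $(0,0)$ (where the integrand has an integrable logarithmic singularity), yield
\[
\log N(n,n,n) \;=\; n^{2}\bigl(1+o(1)\bigr)\int_{0}^{1}\!\int_{0}^{1}\log\frac{x+y+1}{x+y}\,dx\,dy.
\]

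\textbf{Step 2 (Evaluating the integral).} Call the double integral $I$. I would split $I = I_1 - I_0$ with $I_c = \int_0^1\!\int_0^1 \log(x+y+c)\,dx\,dy$, and compute each via the antiderivative $\int \log u\,du = u\log u - u$. The inner integral evaluates to $(y+c+1)\log(y+c+1)-(y+c)\log(y+c)-1$, and the remaining one-dimensional integrals are handled by $\int u\log u\,du = \tfrac{u^2}{2}\log u - \tfrac{u^2}{4}$. A direct computation (using $\log$ for the natural logarithm) gives
\[
I_0 = 2\log 2 - \tfrac{3}{2}, \qquad I_1 = \tfrac{9}{2}\log 3 - 4\log 2 - \tfrac{3}{2},
\]
so $I = \tfrac{9}{2}\log 3 - 6\log 2 = \tfrac{3}{2}\log\!\tfrac{27}{16}$. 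Combining with Step~1,
\[
N(n,n,n) \;=\; \Bigl(\tfrac{27}{16}\Bigr)^{\frac{3}{2}n^{2}(1+o(1))}.
\]

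\textbf{Step 3 (The symmetric case).} Applying \cref{prop:macmahon_sym_formula} with $r=t=n$,
\[
\log S(n,n) \;=\; \sum_{i=1}^{n}\log\frac{2i+n-1}{2i-1} \;+\; \sum_{1\le i<j\le n}\log\frac{i+j+n-1}{i+j-1}.
\]
The first sum contains $n$ terms, each of size $O(\log n)$, hence contributes $O(n\log n)$, which is negligible at scale $n^{2}$. The second sum equals
\[
\tfrac{1}{2}\Biggl[\sum_{i,j=1}^{n}\log\frac{i+j+n-1}{i+j-1}\;-\;\sum_{i=1}^{n}\log\frac{2i+n-1}{2i-1}\Biggr],
\]
where the diagonal correction is again $O(n\log n)$. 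Thus $\log S(n,n) = \tfrac{1}{2}\log N(n,n,n) + O(n\log n)$, and the asymptotics from Step~2 give $S(n,n) = (27/16)^{\frac{3}{4}n^{2}(1+o(1))}$.

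The only non-routine step is Step~2; the genuine obstacle is organizing the one-dimensional integrations without an algebra mistake, and controlling the Riemann-sum error near the singular corner of the square in Step~1. Neither, however, presents any conceptual difficulty.
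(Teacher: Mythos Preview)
Your proof is correct and arrives at the same constant $\tfrac{9}{2}\ln 3 - 6\ln 2 = \tfrac{3}{2}\ln\tfrac{27}{16}$ as the paper, but by a genuinely different route. The paper first rewrites MacMahon's double product as a single product $\prod_{k=1}^{n}\binom{2n+k-1}{n}/\binom{n+k-1}{n}$, applies Stirling's approximation to the binomials, and is left with a one-dimensional sum that it sandwiches between integrals of $(a+x)\ln(a+x)$. You instead keep the double sum and treat it directly as a Riemann sum for $\iint_{[0,1]^2}\log\tfrac{x+y+1}{x+y}\,dx\,dy$, evaluating the iterated integral explicitly. Your approach is more elementary in that it avoids both the binomial reformulation and Stirling, at the cost of a two-dimensional integral and the need to control the corner singularity (which, as you note, is routine since the contribution from $i+j\le \varepsilon n$ is $O(\varepsilon^2\log(1/\varepsilon)\,n^2)$). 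For the symmetric case, both proofs reduce $S(n,n)$ to $N(n,n,n)$ via the identity $S(r,t)^2 = N(r,r,t)\prod_i\tfrac{2i+t-1}{2i-1}$; you derive this by splitting the sum over $i<j$ as half the full sum minus the diagonal, which is the same thing.
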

\begin{proof}
Put differently, we are claiming that
\[\lim_{n\to \infty} \frac{\ln N(n,n,n)}{n^2} = 2\lim_{n\to \infty} \frac{\ln S(n,n)}{n^2} = 2\beta,\]
where $\beta = \frac{3}{4}\left(3\ln 3 - 4\ln 2\right)$.
By propositions \ref{prop:macmahon_formula} and \ref{prop:macmahon_sym_formula} we have 
\[S(r,t)^2 = N(r,r,t)\prod_{i=1}^{r} \frac{2i+t-1}{2i-1}, \]
and it follows that $2\lim_{n\to \infty} \frac{\ln S(n,n)}{n^2}=\lim_{n\to \infty} \frac{\ln N(n,n,n)}{n^2}$. Therefore, it suffices to prove that the latter limit exists and is equal to $2\beta$. First we can rewrite \cref{prop:macmahon_formula} as 
\[N(r,s,t) = \prod_{i=1}^{r}\prod_{j=1}^{s} \frac{i+j+t-1}{i+j-1} = \prod_{k=1}^{t} \frac{\binom{r+s+k-1}{s}}{\binom{s+k-1}{s}.}\]
Therefore, 
	\begin{equation*}
		\begin{split}
			\ln N(n,n,n) & = \sum_{k=1}^{n} \ln\binom{2n+k-1}{n} - \ln \binom{n+k-1}{n} \\
			& =(1+o(1)) \sum_{k=1}^{n} \left[(2n+k)\ln(2n+k) -2(n+k)\ln(n+k) + k\ln k\right],
		\end{split}
	\end{equation*}
where in the second line we have applied Stirling's approximation. For every $a\ge 0$, the
function $f_a(x) = (a+x)\ln(a+x)$ is increasing with $x\ge 1$, and therefore 
\[\sum_{k=1}^{n} f_a(k) \le \int_{1}^{n} f_a(x) dx \le \sum_{k=2}^{n+1} f_a(k).\]
In particular,
\[\ln N(n,n,n) \le \int_{1}^{n} \left[f_{2n}(x)  - 2 f_{n}(x) + f_{0}(x) \right]dx \le \ln N(n,n,n) + O(n\ln n).\]
We turn to determine this integral, starting with the indefinite one
$$\int f_a(x)dx=\int (a+x)\ln(a+x) dx= \frac{1}{2}(a+x)^2\ln(a+x) - \frac{(a+x)^2}{4}$$ 
and hence the definite integral yields
\[\int_{1}^{n} \left[f_{2n}(x)  - 2 f_{n}(x) + f_{0}(x) \right]dx = \left(\frac{9}{2}\ln 3 - 6 \ln 2\right)n^2 + O(n).\]
It follows that 
\[\lim_{n\to \infty} \frac{N(n,n,n)}{n^2} =\frac{9}{2}\ln 3 - 6 \ln 2.\]
\end{proof}
Finally, we prove \cref{prop:monotone_count}.
\begin{proof}(\Cref{prop:monotone_count})
For ease of notation we prove the claim for the $2n$-vertex graph $\mathcal{J}_n$, rather
than the $n$-vertex $\mathcal{J}_{\frac{n}{2}}$. As already mentioned, this amounts to showing
that after taking the log, the number of monotone systems in $\mathcal{J}_n$ and the number of $(n,n)$ symmetric plane partitions agree up to $1+o_n(1)$ factor. Let $\mathcal{M}$ denote the set of monotone path systems in $\mathcal{J}_n$. There is an injection from $\mathcal{M}$ to set of symmetric $(n,n-1)$ plane partition. Indeed, if $f:\binom{[n]}{2}\to [n]$ is the résumé of a monotone system we associate with
it a plane partition $\pi_{i,j}$ such that $n-f(i,j) = \pi_{i,j} = \pi_{j,i}$ and $\pi_{i,i} = \pi_{i+1,i}$. It follows that $|\mathcal{M}| \le S(n,n-1)$. On the other hand, given a symmetric $(n,n-1)$ plane partition $\pi_{i,j}$, the résumé $f(i,j) = n-\pi_{i,j}$ defines a monotone path system. The only reason that this mapping is not injective is that modifications of the
diagonal entries $\pi_{i,i}$ leave the induced path system unchanged. Therefore, 
we arrive at the inequality $S(n,n-1) \le |\mathcal{M}|n^{n}$. Observe that 
\[\lim_{n\to \infty} \frac{\ln |\mathcal{M}|}{n^2} \le  \lim_{n\to \infty} \frac{\ln S(n,n-1)}{n^2} = \lim_{n\to \infty} \frac{\ln S(n,n)}{n^2}\]
and 
\[\lim_{n\to \infty} \frac{\ln |\mathcal{M}|}{n^2} \ge  \lim_{n\to \infty} \frac{\ln (n^{n}S(n,n-1))}{n^2} = \lim_{n\to \infty} \frac{\ln S(n,n)}{n^2}.\]
The claim then follows from \cref{lem:macmahon_asymptotics}.
\end{proof}

\begin{remark}
The constant in \cref{prop:monotone_count} can be modestly improved by considering 
$\mathcal{B}_{n,\gamma}$, a slightly different $n$-vertex graph. It is the join of an anti-clique of size $\beta n$ with a clique of size $(1-\gamma)n$, where $0 <\gamma < 1$. An identical argument shows that monotone systems in such a graph are also strictly metric. The
analysis is similar and shows that $\gamma \approx 0.63$ maximizes the number of monotone systems and yields $\mathscr{NS}(\mathcal{B}_{n,\gamma}) \ge 2^{\xi n^2 (1+o(1))}$ where $\xi \approx 0.154$.
\end{remark}

\section{Maximum VC Classes}
\label{sec:vc}
We now take a different perspective of path 
systems provided by the theory of 
VC dimension. We first recall the 
fundamentals of this domain. Let
$\mathcal{F} \subseteq 2^{[n]}$ be a set 
system for some $n\ge 1$.  
We say that $\mathcal{F}$ {\em shatters} a set $S\subseteq [n]$ if 
$\mathcal{F}\cap S \coloneqq \{S\cap F:F\in\mathcal{F}\} = 2^S$.
The {\em VC dimension} of $\mathcal{F}$ 
is \[\dim_{VC} \mathcal{F} = \max\{|S|: S\subseteq [n], \text{ $\mathcal{F}$ shatters $S$}\}.\]
The well known Perles-Sauer-Shelah bound (\cite{Sh}) states that if $\mathcal{F} \subseteq 2^{[n]}$ 
and $\dim_{VC} \mathcal{F}=d$, then $|\mathcal{F}|\le \sum_{i=0}^{d}\binom{n}{i}$.
In this view, a class $\mathcal{F} \subseteq 2^{[n]}$ of VC dimension $d$ 
is said to be a {\em maximum class} if $|\mathcal{F}| =  \sum_{i=0}^{d}\binom{n}{d}$. \\
Let $\mathcal{P}$ be a consistent path system on $[n]$ and let $\mathcal{F}\subset 2^{[n]}$,
\[\mathcal{F}_{\mathcal{P}} \coloneqq \{V(P) : P\in \mathcal{P}\} \cup \{\emptyset, \{1\},\dots \{n\}\}.\]
In other words, $\mathcal{F}_{\mathcal{P}}$ is a set system containing all the singletons,
the empty-set (i.e., a full 1-dimensional skeleton) as well as 
the vertex set of each path in $\mathcal{P}$. We have the following observation:
\begin{observation}\label[observation]{obs:p_system_VC}
For any consistent path system $\mathcal{P}$, the set system $\mathcal{F}_{\mathcal{P}}$ is an intersection-closed maximum class of VC dimension $2$. 
\end{observation}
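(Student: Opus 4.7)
The plan is to verify three separate claims: intersection-closedness of $\mathcal{F}_{\mathcal{P}}$, VC dimension equal to $2$, and $|\mathcal{F}_{\mathcal{P}}| = \sum_{i=0}^{2}\binom{n}{i} = 1 + n + \binom{n}{2}$, which is the Perles--Sauer--Shelah bound and so certifies maximality.

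Intersection-closedness is essentially a direct transcription of axiom~(2) in the definition of a consistent path system. If $P, Q \in \mathcal{P}$ meet, then their intersection is either a single vertex (a singleton already in $\mathcal{F}_{\mathcal{P}}$) or a path $R \in \mathcal{P}$ (so $V(R) \in \mathcal{F}_{\mathcal{P}}$). Intersections involving a singleton or $\emptyset$ are handled trivially, so $\mathcal{F}_{\mathcal{P}}$ is closed under intersection.

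For the upper VC-dimension bound I would argue by contradiction. Suppose some triple $\{a,b,c\}$ is shattered. Then some $F \in \mathcal{F}_{\mathcal{P}}$ contains $\{a,b,c\}$, which forces $F = V(P)$ for some $P \in \mathcal{P}$ (no singleton or $\emptyset$ works). One of the three vertices, say $b$, lies strictly between the other two along the simple path $P$, and then by the consistency axiom~\eqref{eq:def} the subpath of $P$ from $a$ to $c$ is exactly $P_{a,c}$, so $b \in V(P_{a,c})$. Now any $Q \in \mathcal{P}$ with $a, c \in V(Q)$ contains $P_{a,c}$ as a subpath (consistency again), hence contains $b$. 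So no member of $\mathcal{F}_{\mathcal{P}}$ can realize the trace $\{a,c\}$ on $\{a,b,c\}$, contradicting shattering. The matching lower bound is immediate: any pair $\{u,v\}$ is shattered, because $\emptyset, \{u\}, \{v\}, V(P_{u,v}) \in \mathcal{F}_{\mathcal{P}}$ supply all four traces.

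It remains to count $|\mathcal{F}_{\mathcal{P}}|$ exactly. The vertex sets of the $\binom{n}{2}$ paths have size at least $2$, so they are disjoint from $\{\emptyset\}$ and from the $n$ singletons. For pairwise distinctness of the $V(P_{u,v})$, I would observe that an equality $V(P_{u,v}) = V(P_{x,y})$ puts $x,y$ into $V(P_{u,v})$, whence consistency identifies $P_{x,y}$ with the subpath of $P_{u,v}$ joining $x$ to $y$; such a subpath has the full vertex set only if its endpoints are the endpoints of $P_{u,v}$, forcing $\{x,y\} = \{u,v\}$. Adding up gives the required count. The conceptually interesting step is the VC $\leq 2$ argument, which is where the consistency axiom does real work; the rest is largely bookkeeping.
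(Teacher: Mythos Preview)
Your proof is correct and follows essentially the same approach as the paper: intersection-closedness from the consistency axiom, the VC upper bound via the ``middle vertex'' argument on a would-be shattered triple, and the lower bound via $\emptyset,\{u\},\{v\},V(P_{u,v})$. The only difference is that you spell out the distinctness of the sets $V(P_{u,v})$ in the cardinality count, which the paper simply asserts as clear.
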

\begin{proof}
That $\mathcal{F}_{\mathcal{P}}$ is intersection closed follows from the consistency of $\mathcal{P}$. Also, it is clear that $|\mathcal{F}_{\mathcal{P}}| = \binom{n}{2}+\binom{n}{1}+\binom{n}{0}$.
It remains to show that this class has VC dimension $2$. Indeed, $\mathcal{F}_{\mathcal{P}}$ shatters 
{\em every} set $\{u,v\}$ of cardinality 
$2$, because $V(P_{u,v})$, $\{v\}$, 
$\{u\}$ and $\emptyset$ are all contained 
in $\mathcal{F}_{\mathcal{P}}$. 

Finally, $\mathcal{F}_{\mathcal{P}}$ cannot shatter
any $3$-vertex set $S=\{a,b,c\}$. Otherwise, some path $P\in \mathcal{P}$
contains all three vertices. Say they
appear in $P$ in the order $a,b,c$.
By consistency, any path in $\mathcal{P}$
that contains the vertices $a$ and $c$ must also contain $b$, contrary to the
condition of shattering. 
\end{proof}

Let $m(n,d)$ denote the number of maximum classes $\mathcal{F} \subseteq 2^{[n]}$ of VC-dimension $d$. 
The following result is due to Alon, Moran and Yehudayoff \cite{AMY}:
\begin{proposition}\label[proposition]{prop:amy}
For fixed $d \ge 1$,
\[n^{\frac{1}{d+1}\binom{n}{d}(1+o(1))}\le m(n,d) \le n^{\binom{n}{d}(1+o(1))}.\]
\end{proposition}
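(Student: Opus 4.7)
I would prove the two bounds separately, each relying on a structural property of maximum classes.

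\emph{Upper bound.} By Pajor's lemma, any set system $\mathcal{F}$ shatters at least $|\mathcal{F}|$ subsets. A maximum class of VC dimension $d$ has exactly $\sum_{i=0}^{d}\binom{n}{i}$ elements, which forces it to shatter \emph{every} subset of $[n]$ of size at most $d$. The plan is to exploit a sample compression scheme of size $d$ for maximum classes (existence due to Kuzmin--Warmuth, via corner peeling), providing a bijection between $\mathcal{F}$ and $\binom{[n]}{\le d}$. Given such a scheme, the class is determined by the decoding map, which can be encoded by roughly $\binom{n}{d}(1+o(1))$ independent choices, each parametrized by an element of $[n]$ (e.g., the element being ``adjoined'' relative to a canonical baseline). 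A careful count then gives $m(n,d)\le n^{\binom{n}{d}(1+o(1))}$.

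\emph{Lower bound.} I would construct explicitly a family of $n^{\binom{n}{d}/(d+1)(1-o(1))}$ distinct maximum classes. Starting from the canonical base class $\binom{[n]}{\le d}$, I would perform local modifications indexed by a partition of some combinatorial structure into independent blocks. A natural approach is to pick a system of blocks indexed by $(d+1)$-subsets of $[n]$; since $\binom{n}{d+1} = \binom{n}{d}\cdot (n-d)/(d+1)$, one can arrange roughly $\binom{n}{d}/(d+1)$ independent blocks, each contributing a factor of $n$ to the count. For each block, the modification replaces a small portion of the class with one of $n$ alternative sub-families, each preserving both maximality and VC dimension. The crucial step is to verify that distinct parameter tuples yield non-isomorphic classes, which follows from the modifications affecting combinatorially disjoint regions.

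The main obstacle in both directions is the structural machinery for maximum classes. For the upper bound the bottleneck is the existence of a size-$d$ compression scheme (equivalently, a corner-peeling order), a nontrivial theorem. For the lower bound, the challenge is to verify that the local perturbations neither collide nor destroy maximality, which amounts to a careful combinatorial check of Sauer--Shelah tightness across all perturbed blocks. It is worth noting that the $(d+1)$ factor in the lower-bound exponent is not known to be tight; indeed, the present paper closes the gap for $d=2$ via \cref{obs:p_system_VC} combined with \cref{thm:path_system_count}, while closing it for $d\ge 3$ remains open.
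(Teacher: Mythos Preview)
There is nothing to compare against: the paper does not prove \Cref{prop:amy}. It is quoted verbatim as a result of Alon, Moran and Yehudayoff and attributed to \cite{AMY}; the paper's contribution in this section is \Cref{thm:max_VC_classes}, which supersedes it.

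Your proposal also misstates what the present paper achieves. You write that ``the present paper closes the gap for $d=2$ \ldots\ while closing it for $d\ge 3$ remains open.'' This is wrong: \Cref{thm:max_VC_classes} establishes $m(n,d)=n^{\binom{n}{d}(1-o(1))}$ for every fixed $d\ge 1$, and the proof works uniformly in $d$ via Linial--Meshulam random complexes $Y_{d-1}(n,p)$, not just via path systems for $d=2$.

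As for the sketch itself: the upper-bound idea (compression/peeling encoding a maximum class by a map $\binom{[n]}{\le d}\to[n]$) is the right flavour and is essentially how \cite{AMY} argue. The lower-bound sketch, however, is not a proof. You assert the existence of ``roughly $\binom{n}{d}/(d+1)$ independent blocks, each contributing a factor of $n$'' without saying what the blocks are, why the modifications preserve maximality and VC dimension simultaneously, or why they are combinatorially disjoint. The arithmetic you invoke, $\binom{n}{d+1}=\binom{n}{d}(n-d)/(d+1)$, does not by itself produce $\binom{n}{d}/(d+1)$ independent choices of size $n$; that requires an actual construction (in \cite{AMY} this is done via specific families, not by a block-perturbation heuristic). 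As written, the lower-bound portion is a statement of intent rather than an argument.
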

As shown in \cite{BMW},
the upper bound in \cref{prop:amy} tight. Their lower bound on $m(n,d)$
is attained by appealing to certain induced matchings of the boolean cube. 
Here, we reprove this lower bound from a different perspective, by generalizing the path system construction in \cref{thm:path_system_count}. Explicitly, the statement is:
\begin{theorem}\label[theorem]{thm:max_VC_classes}
For fixed $d\ge 1$,
\[m(n,d) = n^{\binom{n}{d}(1-o(1))}.\]
Moreover, the same holds even for the number of 
intersection closed maximum classes of VC dimension $d$.
\end{theorem}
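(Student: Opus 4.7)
\emph{Proof plan for \Cref{thm:max_VC_classes}.}
The upper bound is exactly \Cref{prop:amy}, so the task reduces to producing $n^{\binom{n}{d}(1-o(1))}$ \emph{intersection-closed} maximum classes of VC dimension $d$. For $d=2$ this follows at once from \Cref{obs:p_system_VC}: the assignment $\mathcal{P}\mapsto\mathcal{F}_{\mathcal{P}}$ is an injection from consistent path systems on $[n]$ into intersection-closed maximum VC-$2$ classes (both the paths and their internal vertices are recoverable from the $2$- and $3$-element sets of $\mathcal{F}_{\mathcal{P}}$), and \Cref{thm:path_system_count} supplies the lower bound.

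For $d\ge 3$, my plan is to lift the neighborly diameter-$2$ construction to the $d$-level. Fix a $d$-uniform hypergraph $H\subseteq\binom{[n]}{d}$, and for each $d$-subset $S\notin H$ a pointer $f(S)\in[n]\setminus S$, subject to the higher-dimensional \emph{consistency} condition that every $d$-subset of $S\cup\{f(S)\}$ other than $S$ itself lies in $H$. Define
\[
\mathcal{F}_{H,f} \;=\; \binom{[n]}{\le d-1} \;\cup\; H \;\cup\; \bigl\{\, S\cup\{f(S)\} : S\in\binom{[n]}{d}\setminus H \,\bigr\}.
\]
Then $|\mathcal{F}_{H,f}|=\sum_{i=0}^{d}\binom{n}{i}$, and a quick check mirroring \Cref{obs:p_system_VC} shows VC dimension exactly $d$: every $d$-subset is shattered, but no $(d+1)$-subset $U$ is, because $U$ can only appear in $\mathcal{F}_{H,f}$ as $S\cup\{f(S)\}$ for a unique $S\notin H$, and then the $d$-subset $S$ of $U$ fails to be realizable as an intersection with any set from $\mathcal{F}_{H,f}$. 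Intersection closure reduces to showing that when two $(d+1)$-sets $S\cup\{f(S)\}$ and $S'\cup\{f(S')\}$ meet in a $d$-set $I$, one has $I\in H$; the consistency condition forces this, since the subcase $I=S$ would make $S\notin H$ a rotated $d$-subset of $S'\cup\{f(S')\}$ other than $S'$, contradicting consistency applied to $(S',f(S'))$, and the remaining subcases give $I\in H$ directly by consistency applied to $(S,f(S))$.

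Counting admissible pairs $(H,f)$ parallels \Cref{lem:diam2Gnp}. Take $H$ to be an Erd\H{o}s--R\'enyi random $d$-uniform hypergraph $H_d(n,p)$ with $p=1/\log n$. For each $d$-subset $S$ the indicators $\mathbf{1}[(S\setminus\{v\})\cup\{z\}\in H]$ (over $v\in S$ and $z\in[n]\setminus S$) involve pairwise-distinct $d$-subsets, so for fixed $S$ the number of valid $z$'s is a sum of $n-d$ independent $\operatorname{Bern}(p^d)$ indicators; Chernoff and a union bound over $S$ show that a.a.s.\ every $S$ admits $(1\pm o(1))np^d$ valid pointers. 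Almost every $S$ lies outside $H$, and the log of the product over $S\notin H$ is
\[
(1-p)\binom{n}{d}\log(np^d) \;=\; \binom{n}{d}\log n\cdot(1-o(1)),
\]
giving $n^{\binom{n}{d}(1-o(1))}$ valid pairs $(H,f)$. The map $(H,f)\mapsto\mathcal{F}_{H,f}$ is manifestly injective ($H$ is the $d$-element layer of $\mathcal{F}_{H,f}$ and $f$ is read off its $(d{+}1)$-element layer), yielding the claimed bound. The main technical obstacle is pinning down the right consistency condition in dimension $d$ so that intersection closure is automatic while the pointer count remains essentially unrestricted; once this is in place the counting step is a direct generalization of the $d=2$ argument.
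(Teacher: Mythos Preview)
Your proposal is correct and follows essentially the same approach as the paper. The paper phrases the construction in terms of the Linial--Meshulam random complex $Y_{d-1}(n,1/\log n)$, whose top-dimensional faces are precisely your random $d$-uniform hypergraph $H_d(n,1/\log n)$, and its notion of a \emph{$Y$-compatible extension} is exactly your consistency condition on the pointer $f(S)$; the counting and VC-dimension arguments then match line for line. Two minor differences: the paper treats all $d\ge 2$ uniformly rather than splitting off $d=2$ via path systems, and it does not spell out the intersection-closure verification that you (correctly) supply.
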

Before proving \cref{thm:max_VC_classes} we consider the special case when $d=2$. Note that the upper bounds in \cref{prop:amy} along with \cref{obs:p_system_VC} 
reprove the upper bound in \cref{thm:path_system_count}. Moreover, for $d=2$ \cref{thm:max_VC_classes}  follows from the lower bound in
\cref{thm:path_system_count}, and the
fact that $|\mathscr{P}_n|\le m(n,2)$.
Recall that in the lower bound proof of \cref{thm:path_system_count} we considered the number of diameter-$2$ path systems in random $G(n,p)$
graphs. Likewise, in the proof of \cref{thm:max_VC_classes} we apply a
similar idea to {\em Linial-Meshulam random complexes}, \cite{LM,MW}.
Let us recall the fundamentals of this
randomized model. $Y_k(n,p)$ is a
distribution over 
$k$-dimensional simplicial complexes
with vertex set $[n]$.
A complex $Y\sim Y_k(n,p)$ sampled
from this distribution has a full
$(k-1)$-dimensional skeleton, i.e.,
$Y$ contains all subsets of $[n]$ of 
size $\le k$. Every subset of size 
$k+1$ (a "$k$-dimensional face") is 
placed in $Y$ independently with 
probability $p$. The set of $Y$'s
$k$-dimensional faces is denoted by $E(Y)$.
Note that $Y_1(n,p)$
is identical with $G(n,p)$.
Here we'll work with $k=d-1$.

Let $Y$ be a $k$-dimensional simplicial
complex on vertex set $[n]$ and let 
$S\in \binom{[n]}{k+1}\setminus Y$. 
We say that a vertex $a\not\in S$ is 
$Y$-{\em compatible} to $S$ if
$S\cup \{a\}\setminus\{x\} \in Y$
for every $x\in S$. We call
$S\cup\{a\}$ a {\em $Y$-compatible 
extension of the base} $S$. Notice
that the extended set $S\cup\{a\}$ 
indeed uniquely defines its base $S$,
because for any $x\neq a$ in 
$S\cup\{a\}$, the set 
$S\cup\{a\}\setminus\{x\}$ 
belongs to $Y$.\\
%The set of vertices that are 
%$Y$-compatible to $S$
%is denoted $C(Y,S)$.

The case $k=1$ provides an interesting 
illustration. Here $Y$ is a graph, and 
$S=\{u,v\}$ comprises a pair of
$Y$-non-neighbors. A vertex $a\neq u,v$
is $Y$-{\em compatible} to $S$, if it is
a $Y$-neighbor of both $u$ and $v$.\\
Standard concentration results yield: 
\begin{lemma}\label[lemma]{lem:concentration}
Let $Y\sim Y_k(n,p)$ for some $k\ge 0$.
If
$p \ge \Omega(\frac{1}{\log n})$ then a.a.s.\
\begin{enumerate}
\item $|E(Y)| = p\binom{n}{k+1}(1+o(1))$
\item 
The number of $Y$-compatible vertices  
to any 
$S\in \binom{[n]}{k+1}\setminus Y$
is $p^{k+1}n(1+o(1))$.
\end{enumerate}
\end{lemma}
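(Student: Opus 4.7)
The plan is to derive both statements from standard Chernoff-type concentration, together with a union bound in the second part. The only real content beyond textbook concentration is in correctly identifying the (in)dependence structure of the Linial–Meshulam model: each potential $k$-face is included independently with probability $p$, so any event determined by a fixed set of $k$-faces has a clean product probability.

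For part (1), the quantity $|E(Y)|$ is a sum of $N=\binom{n}{k+1}$ i.i.d.\ Bernoulli$(p)$ random variables with mean $\mu = pN$. Since $p \ge \Omega(1/\log n)$, we have $\mu \ge n^{k+1}/\mathrm{poly}(\log n)$, and the multiplicative Chernoff bound gives
\[\Pr\bigl[\,\bigl||E(Y)|-\mu\bigr| > \delta\mu\,\bigr] \le 2\exp(-\delta^2 \mu/3),\]
which is $o(1)$ even for $\delta = \mathrm{polylog}(n)/\sqrt{n}$, in particular for some $\delta = o(1)$.

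For part (2), fix $S \in \binom{[n]}{k+1}$ (for the moment, without conditioning on $S \notin Y$). For each vertex $a \notin S$, compatibility of $a$ with $S$ is the event that all $k+1$ faces $S\cup\{a\}\setminus\{x\}$, $x \in S$, lie in $Y$. I would first observe that for distinct $a, b \notin S$ the two collections of $(k+1)$ faces used for $a$ and for $b$ are disjoint (each face in the $a$-collection contains $a$ but not $b$, and vice versa), and all of them are distinct from $S$ itself. Hence the indicator variables $\mathbf{1}[a \text{ is } Y\text{-compatible with } S]$ for $a\in[n]\setminus S$ are mutually independent Bernoulli$(p^{k+1})$ random variables, and moreover they are independent of the indicator $\mathbf{1}[S\in Y]$. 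So, conditioning on $S\notin Y$ does not alter the distribution of the compatibility count, whose mean equals $(n-k-1)p^{k+1} = np^{k+1}(1+o(1))$.

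Now I would apply a multiplicative Chernoff bound to this sum. Under $p \ge \Omega(1/\log n)$ and $k$ fixed, the mean $np^{k+1}$ is at least $n/(\log n)^{k+1}$, so taking $\delta \to 0$ slowly (say $\delta = (\log n)^{(k+2)/2}/n^{1/2-\varepsilon}$) gives failure probability $\exp(-\omega(\log n))$. Taking a union bound over the $\binom{n}{k+1} = O(n^{k+1})$ choices of $S$ still leaves failure probability $o(1)$, yielding the claim a.a.s.\ uniformly over all $S$ (in particular over all $S \notin Y$). The main thing to verify carefully, and essentially the only place that the proof could trip up, is the independence check in the previous paragraph; once that is in hand, the parameter regime $p \ge \Omega(1/\log n)$ is easily seen to leave comfortable room for the Chernoff and union bounds.
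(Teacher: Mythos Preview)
Your proposal is correct and follows essentially the same approach as the paper's (omitted) proof: a direct Chernoff bound for part~(1), and for part~(2) a Chernoff bound on the sum of compatibility indicators followed by a union bound over all $S$. Your explicit verification that the compatibility indicators for distinct $a$ are mutually independent (and independent of $\mathbf{1}[S\in Y]$) is a welcome bit of care that the paper's sketch glosses over, but the overall argument is the same.
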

\begin{comment}
    
\begin{proof}
This lemma is a straightforward exercise in concentration bounds. 
The first claim simply follows from the fact that $E(Y)$ is binomial random variable with mean tending to infinity.
For the second claim, fix $S\in \binom{[n]}{d}$ and for each $a\in [n] \setminus S$ let $X_{S,a}$ denote the indicator random variable of the event that of the $a$ is a midpoint for $S$. Clearly, this event happens with probability $p^d$. Defining $X_S = \sum_{a\in [n] \setminus S}X_{S,a}$ we have $\mathbb{E}[X_S]=\mu = p^d(n-d)$. By assumption $p^dn = \omega(\ln n)$ and so we can write $\mu = f(n) \ln n$ where $f(n)\to \infty$. Applying the Chernoff bounds with $\delta = f(n)^{-1/4} $
\[\Pr(|X_S-\mu|\ge \delta\mu ) \le 2e^{-\delta^2  \mu /3} \le 2n^{-\frac{\sqrt{f(n)}}{3}}. \]
Finally, apply the union bound we get 
\[\Pr(\exists S: |X_S-\mu|\ge \delta\mu) \le \binom{n}{d}2n^{-\frac{\sqrt{f(n)}}{3}} = o(1).\]
\end{proof}
\end{comment}
We now prove \cref{thm:max_VC_classes}.

\begin{proof}
(\Cref{thm:max_VC_classes})

As already mentioned, we wish to show that the upper bound in \cref{prop:amy}
is tight.
We construct a large collection of
maximal families $\mathcal{F}$ of
VC-dimension $d$. In particular, each
$\mathcal{F}$ is a set systems
with $\sum_{i=0}^{d} \binom{n}{i}$ 
sets. To start, we sample 
$Y\sim Y_{d-1}(n,p)$ with 
$p=\frac{1}{\log n}$. We may and will
assume (with asymptotic almost 
certainty) that $Y$ satisfies
the conclusions
of \cref{lem:concentration}.
The family $\mathcal{F}$ comprises
all the faces of $Y$, as well as a
$Y$-compatible extension of every
set in $\binom{[n]}{d} \setminus E(Y)$.
Clearly, $\mathcal{F}$ contains
$\sum_{i=0}^{d} \binom{n}{i}$ sets, as
planned. 

By \cref{lem:concentration}, any set in 
$\binom{[n]}{d}\setminus E(Y)$
has $p^{d} n(1+o(1))$
$Y$-compatible extensions. Since $|\binom{[n]}{d} \setminus E(Y)| = (1-p)\binom{n}{d}(1+o(1))$ we get a total of 
\[\left(p^d n(1+o(1))\right)^{(1-p)\binom{n}{d}(1+o(1))}\]
distinct set systems $\mathcal{F}$ 
of the desired cardinality. 
Moreover, $p = \frac{1}{\log n} = o(1)$ and 
$p^d = n^{-d\frac{\log \log n}{\log n}} = n^{-o(1)}$ and the above expression simplifies to 
\[ n^{\binom{n}{d}(1+o(1))}.\]
There is no double-counting here, 
because, as mentioned, the extended sets
in $\mathcal{F}$ uniquely split to
base-plus-extension.
It remains to show that the VC dimension
of each such system is $d$.
It is at least $d$, 
since clearly all members of $E(Y)$  
are shattered. On the other hands, 
suppose that one such system 
$\mathcal{F}$ shatters some set
$Z$ of cardinality $d+1$. Necessarily,
$Z\in \mathcal{F}$ because all sets
in $\mathcal{F}$ have cardinality
$\le d+1$. By construction, $Z$
must be a $Y$-compatible extension, 
i.e., $Z=S\cup\{a\}$ for some
$S\not\in T$. It follows that 
$\mathcal{F}$ does not shatter $Z$.

\begin{comment}
is an extension of some 
$T\in \binom{[n]}{d} \setminus E(Y)$. Moreover, there must be another $F\in \mathcal{F}$ such that $F\cap S = T$. Since $T\notin E(Y)$, $F$ must be another set of size $d+1$. But $F$ is also the union of some $T'\in \binom{[n]}{d} \setminus E(Y)$ and $a'\in N(Y,T)$.  If $T'=T$ then $F=S$ which is impossible. On the other hand, if $a'\in T$ this implies that $T\in E(Y)$, again a contradiction.
\end{comment}
\end{proof}

\section{Open Questions}\label{sec:open}
Numerous interesting open questions suggest themselves. Here are some of them
\begin{itemize}
\item 
We have determined the asymptotics of
$\log(|\mathscr{M}_n|)$ as well as
$\log(|\mathscr{P}_n|)$, viz.,
\[ n^{\frac{n^2}{2}(1-o(1))} \le|\mathscr{M}_n|\le |\mathscr{P}_n| \le n^{\frac{n^2}{2}},\]
where $\mathscr{P}_n$ and
$\mathscr{M}_n$ is the set of all consistent, resp.\ 
metric, path systems on $n$ 
vertices. It is reasonable to conjecture
that $|\mathscr{M}_n|=o_n(|\mathscr{P}_n|)$,
and even ask about the rate at
which $|\mathscr{M}_n|/|\mathscr{P}_n|$
tends to zero with $n$.\\
We note that 
$|\mathscr{P}_n\setminus \mathscr{M}_n|=n^{\frac{n^2}{2}(1+o(1))}$.
Recall that the Petersen graph $H$ admits a non-metric neighborly path system 
$\mathcal{P}_H$ \cite{CL}. Let us
fix a copy of $H$ inside a $G(n,\frac{1}{\log n})$ 
graph $G$. Each of the following 
$n^{\frac{n^2}{2}(1+o(1))}$ path systems in $G$ is neighborly,
consistent and non-metric.
On $H$ all these systems coincide with $\mathcal{P}_H$. 
Every other pair of
non-adjacent vertices in $G$ is connected
by any of their $\frac{(1+o(1))n}{\log^2 n}$ length-$2$ paths.
The sub-system $\mathcal{P}_H$ makes all of them non-metric.
\item Almost all bounds in the paper are about the asymptotics of the logarithm.
It is desirable, of course, to upgrade these estimates to direct asymptotics.
\item 
There are various natural measures of 
disagreement $\Delta$ between pairs of 
path systems,
$\mathcal{P}$ and $\mathcal{Q}$ on the
same vertex set $[n]$.
Here are two, but numerous other 
measures suggest themselves:
\begin{enumerate}
\item 
The number of pairs $1\le i < j\le n$
for which $P_{i,j} \neq Q_{i,j}$.
\item 
The number of triples $i, j, k \in [n]$
where $k$ resides on one of the two paths
$P_{i,j}$, $Q_{i,j}$, but not on both.
\end{enumerate}
This induces a notion of distance 
between an $n$-vertex path system 
$\mathcal{P}$ and a class $\mathcal{A}$ of
$n$-vertex path systems, namely
\[\text{dist}(\mathcal{P}, \mathcal{A}):=\min \{\Delta(\mathcal{P}, \mathcal{Q})\} ~|~\mathcal{Q} \in \mathcal{A}\}\]
We have considered various classes of
path systems, e.g., consistent, metric,
strictly metric, neighborly and more.
Given two such classes of path systems
$\mathcal{A}$ and $\mathcal{B}$, it is
interesting to 
\begin{enumerate}
\item 
Determine $\min \text{dist}(\mathcal{P}, \mathcal{A})$ over $\mathcal{P}\in \mathcal{B}$, or
\item 
Understand how $\text{dist}(\mathcal{P}, \mathcal{A})$ is distributed over $\mathcal{P}\in \mathcal{B}$.
\end{enumerate}
\item 
It is particularly intriguing to compare between metric and strictly
metric path systems. Here is a possible perspective on this question.
Define $B$ to be the cube $(0,1)^{\binom{n}{2}}$, where we view a point $w\in B$
as a map $\binom{n}{2}\to (0,1)$. Let $\mathcal{Q}$ be any strictly metric path 
system on vertex set $[n]$. Corresponding to $\mathcal{Q}$ is a set of linear
inequalities in the values $w(i,j)$ that uniquely defines $\mathcal{Q}$.
Namely, it is the intersection of all half spaces that are defined by an inequality
of the form $w(Q_{xy}) < w(Q')$, over all pairs of vertices $x,y$, where $Q_{xy}$
is the $xy$-path in $\mathcal{Q}$ and  $Q'\neq Q_{xy}$ is any other $xy$ path. 
Here, if $P=(v_1, v_2, \ldots)$ is a path, then
$w(P):=\sum_i w(v_i, v_{i+1})$. This collection of linear inequalities defines
a full-dimensional polytope $C_{\mathcal{Q}}$. The collection of polytopes 
$C_{\mathcal{Q}}$, ranging over all strictly metric $n$-point path systems 
$\mathcal{Q}$ partition 
the cube $B$ minus a measure zero set into a finite collection of polytopes.
What do these polytopes typically look like? What are their vertex sets?

This measure-zero set comprises the hyperplane arrangement $\mathcal{A}$ defined
by all homogeneous linear equations of the form $w(P)=w(Q)$
for some two distinct paths $P, Q$ with the same pair of end vertices.

Many questions suggest themselves: Given $w\in \mathcal{A}$, which (non-strict,
but consistent) path systems does it correspond to? How to sort them out?
Can we decide which pairs of strictly metric
path systems share a face? Of which dimension? Which non-strict metric path
systems border on a given strict one? This discussion naturally assigns a dimension
to every non-strict metric path system $\mathcal{P}$, namely, the highest dimension of a face
of $\mathcal{A}$ on which $\mathcal{P}$ resides. What can be said about this dimension?
Does every face of $\mathcal{A}$ contain some non-strict metric path system?
\item 
The Graham, Yao and Yao paper \cite{GYY} does not speak about path systems 
but rather about {\em connection matrices}, for the definition of
which we refer the reader to their paper. The relation between the two
theories is still not fully understood.
\item 
These questions are also closely related to the quest of the
$f$-vector of the metric cone. For this as well as the study of
the metric polytope, see \cite{De,De2,De3}
\item 
For the number of $n$-vertex strictly 
metric path systems we have the estimates:
\[2^{0.51n^2(1-o(1))}\le|\mathscr{S}_n| \le 2^{1.38n^2(1-o(1))}.\]
Is it possible to narrow the gap between
the bounds? Perhaps even derive the exact
constant in the exponent.
\item 
In particular, we still do not know
how many strictly metric path systems
a fixed graph can have. Neither do we know the answer for a random
$G(n,\frac 12)$ graph, i.e., for almost all graphs.
\item 
In \cref{thm:monotone} we find certain classes of monotone path systems that
are strictly metric. We conjecture that this is a special case of a broader phenomenon.
Let $f:\binom{[n]}{2}\to [n]$ be the (unique) résumé of a diameter-$2$ path system 
$\mathcal{P}$. We say $\mathcal{P}$ is monotone if $f(i,j) \le f(i,k)$ whenever 
$\{i,j\}, \{i,k\}\in \text{dom} f$ and $j\le k$. We conjecture:
\begin{conjecture}
Every monotone consistent path system of diameter $2$ is strictly metric.
\end{conjecture}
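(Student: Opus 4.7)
The plan is to mimic the proof of \cref{thm:monotone} via the duality provided by \cref{lem:diam_2_witness}. Let $f$ be the résumé of the given monotone diameter-$2$ path system $\mathcal{P}$, and suppose for contradiction that $\mathcal{P}$ is not strictly metric. Then \cref{lem:diam_2_witness} supplies nonnegative coefficients $\alpha_{a,b;c}$ with $\sum_c \alpha_{a,b;c}=1$ for each $\{a,b\}\in\text{dom}(f)$, satisfying the stated vector identity and with $\alpha_{a,b;f(a,b)} < 1$ for at least one pair. As a first step, projecting the identity onto coordinates $e_{u,v}$ corresponding to non-edges of $G$ forces $\alpha_{a,b;c} = 0$ unless $c$ is a common neighbor of $a$ and $b$, recovering the basic structural constraint that the dual weight is supported on legitimate $2$-step detours.

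Next, I would pick $(a,b)$ with $a < b$ lex-minimal subject to $\alpha_{a,b;f(a,b)} < 1$, set $m := f(a,b)$, and project the identity onto the coordinate $e_{a,m}$. This yields a balance of the form $N_a(m) + N_m(a) = A_a(m) + A_m(a)$, where $N_u(v) := |\{q : (u,q)\in\text{dom}(f),\, f(u,q)=v\}|$ and $A_u(v) := \sum_q \alpha_{u,q;v}$. By lex-minimality, the contributions of all pairs lex-smaller than $\{a,b\}$ cancel exactly, and by monotonicity of $f(a,\cdot)$ the set $\{q : f(a,q)=m\}$ is an interval $[b,r]$ in the ordered non-neighbors of $a$. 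The residual balance then forces the existence of some $l > r$ with $\alpha_{a,l;m} > 0$ (and $m$ must be a common neighbor of $a$ and $l$, by the preliminary observation). Repeating the analysis at the coordinate $e_{l,m}$ closes the argument: by column monotonicity of $f(\cdot,l)$ (which follows from row monotonicity together with the symmetry of $f$), every non-neighbor $i \ge a$ of $l$ satisfies $f(i,l) \ge f(a,l) > m$, while lex-minimality handles the indices $i < a$; the resulting balance compels $\sum_{i\ge a}\alpha_{i,l;m} = 0$, contradicting $\alpha_{a,l;m} > 0$.

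The principal obstacle, absent in the join graph $\mathcal{J}_n$ of \cref{thm:monotone}, is that a coordinate $e_{u,v}$ of the dual identity simultaneously receives contributions from pairs where $v$ is the midpoint and from pairs where $u$ is the midpoint. In $\mathcal{J}_n$ the bipartition between the clique and the anti-clique automatically neutralizes one of the two directions, but in a general diameter-$2$ graph both sides must be tracked. Concretely, the reversed contributions $N_m(a)$ and $A_m(a)$ at $e_{a,m}$ are indexed by pairs $\{m,q\}$ whose lex-position relative to $\{a,b\}$ is not controlled by the minimality of $\{a,b\}$ alone. The main technical work will be to verify that lex-minimality of $\{a,b\}$, perhaps refined with a tie-breaking rule involving $m=f(a,b)$, or replaced by a well-founded order that respects both arguments of $f$ simultaneously, still cancels every lex-smaller contribution on both sides of the identity, so that the positive residual $1 - \alpha_{a,b;m}$ survives intact. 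If this bookkeeping succeeds, the symmetric row and column monotonicity of $f$ makes the arguments at $e_{a,m}$ and at $e_{l,m}$ two instances of the same "row-extension" lemma, and their combination yields the contradiction uniformly.
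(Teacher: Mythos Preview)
This statement is listed in the paper as an open \emph{conjecture} (Section~\ref{sec:open}); the paper does not provide a proof. So there is no ``paper's own proof'' to compare against, and what you have written is an attempt on an open problem rather than a reconstruction.

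Your proposal is a plausible strategy and you have correctly isolated the obstruction that distinguishes the general case from \Cref{thm:monotone}: at a coordinate $e_{a,m}$ of the dual identity from \Cref{lem:diam_2_witness}, one now gets contributions both from pairs $\{a,q\}$ with midpoint $m$ and from pairs $\{m,q\}$ with midpoint $a$. However, your proposal does not actually close this gap --- you explicitly defer it to ``bookkeeping'' and a hoped-for refinement of the lex order. That is precisely where the difficulty lies. Lex-minimality of $\{a,b\}$ controls the terms $\alpha_{a,q;\,\cdot}$ for $q<b$ and the terms $\alpha_{i,j;\,\cdot}$ with $i<a$, but it says nothing about pairs $\{m,q\}$ with $m>a$: these can be lex-larger than $\{a,b\}$ and yet contribute to $e_{a,m}$ via the midpoint $a$. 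Monotonicity of $f$ constrains $f(m,\cdot)$ only along the non-neighbors of $m$, and there is no a~priori relation between the positions of those values and the lex rank of $\{a,b\}$. The same issue recurs at the second coordinate $e_{l,m}$, where you must additionally handle cross-terms $N_m(l)$ and $A_m(l)$, not just $N_l(m)$ and $A_l(m)$.

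In short: your outline reproduces the easy half of the argument (the row-interval structure from monotonicity and cancellation of the ``forward'' terms) but leaves the genuinely new phenomenon --- bidirectional contributions at each edge coordinate --- unresolved. As written this is a proof sketch with a real gap, not a proof; and since the statement is a conjecture, that gap is exactly the open content.
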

\item 
We recall that we still do not have a 
satisfactory theory of {\em neighborly}
path systems. To mention one
specific question in this direction: Given
a graph $G$, how hard is it to decide
whether $G$ has a consistent neighborly
path system that is non-metric?
\end{itemize}

\maketitle

\printbibliography

\end{document}